\documentclass[12pt]{article}

\input xy

\xyoption{all}

\usepackage[utf8]{inputenc}
\usepackage[english]{babel}
\usepackage[T1]{fontenc}

\usepackage{sectsty,tocloft,hyperref,stmaryrd}

\usepackage[svgnames,x11names]{xcolor}

 \hypersetup{
   colorlinks, linkcolor={Blue},
   citecolor={red}, urlcolor={DarkOliveGreen4}
  }

\usepackage{enumitem}
\usepackage{mdwlist}
\usepackage{amssymb,amsfonts,amsbsy,amsthm,amsmath,graphicx,epsfig,bbm}
\usepackage{
times,mathrsfs}
\usepackage{pgf,tikz}
\usetikzlibrary{arrows}
\usepackage{textcomp}
\usepackage[many]{tcolorbox} 

\partfont{\normalsize}


\subsectionfont{\normalsize}

\subsubsectionfont{\mdseries\itshape\normalsize}


\newenvironment{eq}{\begin{equation}}{\end{equation}} 

\pagestyle{plain}
\theoremstyle{theorem}
\newtheorem{thm}{Theorem}[section]
\newtheorem{cor}{Corollary}[thm]

\newtheorem{lem}[thm]{Lemma}
\newtheorem{prop}[thm]{Proposition}

\newtheorem{quest}[thm]{Question}
\newtheorem{defi}[thm]{Definition}

\theoremstyle{definition}
\newtheorem{rmq}[thm]{Remark}
\newtheorem*{rmqs}{Remarks}
\newtheorem{exe}[thm]{Example}

\theoremstyle{remark}

\newenvironment{rmq*}
 {\pushQED{\qed}\rmq}
 {\popQED\endrmq}

\newenvironment{exe*}
 {\pushQED{\qed}\exe}
 {\popQED\endexe}


\renewcommand{\bf}[1]{\boldsymbol{#1}}
\renewcommand{\rm}[1]{\mathrm{#1}}


\newcommand{\bbE}{\mathbb{E}}

\newcommand{\bbN}{\mathbb{N}}
\newcommand{\bbP}{\mathbb{P}}
\newcommand{\bbQ}{\mathbb{Q}}
\newcommand{\bbR}{\mathbb{R}}
\newcommand{\bbT}{\mathbb{T}}
\newcommand{\bbZ}{\mathbb{Z}}


\newcommand{\bfX}{\mathbf{X}}
\newcommand{\bfY}{\mathbf{Y}}
\newcommand{\bfZ}{\mathbf{Z}}
\newcommand{\bfU}{\mathbf{U}}
\newcommand{\bfV}{\mathbf{V}}
\newcommand{\bfW}{\mathbf{W}}




\newcommand{\T}{\mathrm{T}}



\newcommand{\A}{\mathscr{A}}
\newcommand{\B}{\mathscr{B}}
\newcommand{\C}{\mathscr{C}}
\newcommand{\D}{\mathscr{D}}
\newcommand{\E}{\mathscr{E}}
\newcommand{\F}{\mathscr{F}}

\newcommand{\G}{\mathscr{G}}

\newcommand{\I}{\mathscr{I}}
\newcommand{\J}{\mathscr{J}}

\newcommand{\scrL}{\mathscr{L}}
\newcommand{\calL}{\mathcal{L}}

\newcommand{\M}{\mathscr{M}}
\renewcommand{\P}{\mathscr{P}}
\newcommand{\calP}{\mathcal{P}}

\newcommand{\scrT}{\mathscr{T}}






\renewcommand{\a}{\alpha}

\newcommand{\eps}{\varepsilon}
\newcommand{\g}{\gamma}

\renewcommand{\l}{\lambda}
\renewcommand{\o}{\omega}
\newcommand{\s}{\sigma}


\newcommand{\aut}{\mathrm{Aut}}


\renewcommand{\t}[1]{\widetilde{#1}}

\renewcommand{\to}{\longrightarrow}
\newcommand{\arr}{\rightarrow}
\renewcommand{\phi}{\varphi}

\newcommand*{\quotient}[2]
{\ensuremath{
    #1/\!\raisebox{-.65ex}{\ensuremath{#2}}}}

\newcommand{\indep}{\perp \!\!\! \perp}


\def\commutatif{\ar@{}[rd]|{\circlearrowleft}}

\definecolor{main}{HTML}{5989cf}
\newtcolorbox{boxB}{
    fontupper = \bf, 
    boxrule = 1.5pt,
    colframe = main,
    rounded corners,
    arc = 5pt   
}

\begin{document}

\title{Confined extensions and non-standard dynamical filtrations}

\author{Séverin Benzoni
\\}
\date{}

\maketitle

\begin{abstract}
In this paper, we explore various ways in which a factor $\s$-algebra $\B$ can sit in a dynamical system $\bfX :=(X, \A, \mu, T)$, i.e. we study some possible structures of the \emph{extension} $\A \arr \B$. We consider the concepts of \emph{super-innovations} and \emph{standardness} of extensions, which are inspired from the theory of filtrations. An important focus of our work is the introduction of the notion of \emph{confined extensions}, whose initial interest is that they have no super-innovation. We give several examples and study additional properties of confined extensions, including several lifting results. Then, using $T, T^{-1}$ transformations, we show our main result: the existence of non-standard extensions. Finally, this result finds an application to the study of dynamical filtrations, i.e. filtrations of the form $(\F_n)_{n \leq 0}$ such that each $\F_n$ is a factor $\s$-algebra. We show that there exist \emph{non-standard I-cosy dynamical filtrations}.
\end{abstract}

\tableofcontents 

\section{Introduction}

\subsection{Motivations}

The main focus of this paper is the study of the structure of a measure theoretic dynamical system with one of its factors. Given a dynamical system $\bfX := (X, \A, \mu, T)$, we consider its \emph{factor $\s$-algebras}, i.e. the sub-$\s$-algebras $\B \subset \A$ that are $T$-invariant. If $\B$ is a factor $\s$-algebra, we also say that $\A$ is an \emph{extension} of $\B$, or, in short, that the pair $(\A, \B)$ is an extension, which we note $\A \arr \B$. Our purpose is to understand some of the ways in which a factor $\s$-algebra can sit in $\A$. This is a key question in ergodic theory and has been studied from various points of view like, for example, in \cite{Furstenberg_disjointness}, \cite{complementability}, \cite{factors_ber_shifts}, \cite{stable_extensions} or \cite{thouvenot_ornstein_rel}. Here, our approach is largely inspired by the study of filtrations. In general, a filtration is an ordered family of $\s$-algebras, so we can view an extension as a filtration with only two steps. We use some vocabulary and notions from the theory of filtrations initiated by Vershik (see \cite{Vershik}, \cite{Laurent_standardness-cosiness}) and its adaptation to dynamical filtrations, i.e. filtrations made of factor $\s$-algebras on a dynamical system (see \cite{article_PL}, \cite{these_PL}). In return, our study of extensions enables us to get new results on dynamical filtrations. 

First consider what is left when we remove the transformation $T$, which corresponds to the case where $T = \mathrm{Id}$. Let $(X, \A, \mu)$ be a Lebesgue probability space and take a countably generated sub-$\s$-algebra $\B \subset \A$. In \cite[\S 4]{rohlin_52}, Rokhlin gave a complete description of the possible configurations that arise when we consider such objects. His approach was based on a close study of the conditional measures $(\mu_x)_{x \in X}$ obtained by decomposing $\mu$ over $\B$. In particular, if all of those measures are continuous (i.e. $\forall x, x' \in X$, $\mu_x(\{x'\}) = 0$), then $\B$ has an independent complement: a $\s$-algebra $\C \subset \A$ such that 
\begin{eq}\label{eq:innov}
\C \indep \B \; \text{ and } \; \A = \B \vee \C.
\end{eq}
In the general case where the measures $(\mu_x)_{x \in X}$ have atoms, Rokhlin's description is precise, but written in intricate measure theoretical terms. We prefer the probabilistic formulation found in \cite[Proposition 3.25]{Laurent_standardness-cosiness}: up to embedding $\A$ in a larger $\s$-algebra $\tilde\A$, there is a $\s$-algebra $\C \subset \tilde\A$ such that 
\begin{eq}\label{eq:super-innov}
\C \indep \B \; \text{ and } \; \A \subset \B \vee \C.
\end{eq}
Such a $\C$ is called a \emph{super-innovation}. 

The study of $(X, \A, \mu)$ over $\B$ that we briefly described above is what we refer to as the <<static case>>. Our purpose in this paper is to study the <<dynamical case>>, that arises when a measure preserving transformation $T$ is given and $\B$ is $T$-invariant.

The first question we consider regarding the dynamical case is to compare it to the setup obtained in the static case. We wonder if, in general, there always exists a \emph{dynamical} super-innovation from $\B$ to $\A$, i.e. up to embedding $\bfX$ in a larger system $\tilde\bfX := (\tilde X, \tilde \A, \tilde \mu, \tilde T)$, a $\tilde T$-invariant $\s$-algebra $\C$ satisfying \eqref{eq:super-innov}.

\begin{exe*}
We give here an example to highlight the distinction between (static) super-innovations and dynamical super-innovations. After this, the term <<super-innovation>> will only be used to refer to dynamical super-innovations. 

Take $(\eps_n)_{n \in \bbZ}$ a sequence of independent coin tosses with $1$ or $-1$ on each side of the coin, and $\A$ the associated $\s$-algebra. Consider the $\s$-algebra $\B := \tau^{-1}\A \subset \A$ generated by the cellular automaton  
$$\begin{array}{cccc}
\tau: & \{\pm 1\}^\bbZ & \to & \{\pm 1\}^\bbZ\\
& (\eps_n)_{n \in \bbZ} & \longmapsto & (\eps_n \eps_{n+1})_{n \in \bbZ}\\
\end{array}.$$
From the study done in \cite{article_PL}, we know that the $\s$-algebra generated by $\eps_0$ gives a super-innovation (it is even an independent complement) from $\tau^{-1}\A$ to $\A$. However, if we consider the dynamics given by the shift
$$T: (\eps_n)_{n \in \bbZ} \mapsto (\eps_{n+1})_{n \in \bbZ},$$
then $\tau^{-1}\A$ is a factor $\s$-algebra, but the $\s$-algebra generated by the projection $(\eps_n)_{n \in \bbZ} \mapsto \eps_0$ is not an invariant factor and therefore gives no information about the dynamical structure of $(\{\pm1\}^\bbZ, \A, \mu, T)$ over $\tau^{-1}\A$.

\end{exe*}

We give several examples of extensions with no dynamical super-innovations, which include $\A \to \tau^{-1}\A$, thus showing the first difference with the static case. To get those examples, we introduce the key notion of \emph{confined extensions}: extensions $\A \arr \B$ such that for any joining of $\bfX$ with a system $\bfZ:= (Z, \C, \rho, R)$ such that $\C$ is independent of $\B$, we have that $\C$ is also independent of $\A$(see Definition \ref{def:confined}). This is quite close to the notions of \emph{stability} and \emph{GW-property} presented in \cite{stable_extensions}, but it is easier to use, invariant under isomorphism and applicable in a more general context, without any ergodicity assumptions. Since stability and the GW-property implicitly require that the considered extension be \emph{relatively uniquely ergodic} (see Definition \ref{def:RUE}), comparing confinement to those properties leads us to prove that a confined extension is always isomorphic to a relatively uniquely ergodic extension. We dedicate most of this paper to studying the properties of confined extensions and giving various examples. One property is of particular interest: confined extensions do not admit super-innovations (see Proposition \ref{prop:extension_not_confined}).


As we mentioned earlier, our work on extensions finds an application to the study of \emph{dynamical filtrations}, which are filtrations of the form $\F := (\F_n)_{n \leq 0}$ such that each $\F_n$ is $T$-invariant. The basic setup to study those objects was introduced in \cite{these_PL}, \cite{article_PL}, and we present it in Section \ref{sect:non-standard_I-cosy}. A significant class that arises in this setup is the class of \emph{standard} dynamical filtrations (see Definition \ref{defi:standard_filtration}). This notion of standardness is a translation to the dynamical case of the notion of standardness introduced by Vershik in \cite{Vershik} for a filtration on a probability space. When $T = \mathrm{Id}$, those two notions are equivalent. 

The fact that a dynamical filtration $\F$ is standard imposes some structure on the extensions $\F_{n+1} \arr \F_n$.  To formalize that, we introduce the notion of \emph{standard extension} (see Definition \ref{defi:standard}), which is a weaker property than admitting a super-innovation. The definition of standard extensions is chosen so that, for a standard dynamical filtration $\F$, every extension $\F_{n+1} \arr \F_n$ is standard. Although it is more difficult than finding confined extensions, we also manage to build a non-standard extension, further emphasizing the variety of structures that can arise in the dynamical case. 

In the static case, there are several equivalent criteria to characterize standard filtrations, one of them being I-cosiness. This notion translates to the dynamical case (see \cite[Definition 3.7]{article_PL} or Section \ref{sect:non-standard_I-cosy}). Although it was shown in \cite{article_PL} that standard dynamical filtrations are I-cosy, the converse result was left as an open question. We see in Proposition \ref{prop:non-stadard_I-cosy} that the existence of non-standard extensions gives a negative answer. This is the initial motivation for the work we present in this paper.



\subsubsection*{Outline of the paper}
In Sections \ref{sect:prod_type} and \ref{sect:confined}, we define the main properties that we want to study and we use compact extensions to give concrete examples. In particular, compact extensions give us many examples of confined extensions (see Theorem \ref{thm:confinement_compact}, Proposition \ref{prop:non-WM_compact_ext}), but they are all standard (see Lemma \ref{lem:compact_standard}). In Section \ref{sect:TT-1}, we see that $T, T^{-1}$ transformations give non-compact confined extensions (see Theorem \ref{thm:TT-1_confined}), and we show that, provided $T$ has the so-called PID property, we get a non-standard extension (see Theorem \ref{thm:hyper-confined_extension}). The PID (\emph{pairwise independently determined}) property was introduced by Del Junco and Rudolph in \cite{deljunco_rudolph}, and we recall it in Definition \ref{defi:PID}.

Finally, we give in Section \ref{sect:non-standard_I-cosy} the details of our application of the existence of non-standard extensions to the study of dynamical filtrations.

\subsection{Notations} \label{sect:notations}

A \emph{dynamical system} is a quadruple $\bfX := (X, \A, \mu, T)$ such that $(X, \A, \mu)$ is a Lebesgue probability space, and $T$ is an invertible measure-preserving transformation. We note $\P(X)$ the set of probability measures on $(X, \A)$ and $\P_T(X) \subset \P(X)$ the set of $T$-invariant probability measures. 

Let $\B, \C \subset \A$ be sub-$\sigma$-algebras. We write $\B \subset \C$ mod $\mu$, if for every $B \in \B$, there exists $C \in \C$ such that $\mu(B \Delta C)=0$. Then, $\B = \C$ mod $\mu$ if $\B \subset \C$ mod $\mu$ and $\C \subset \B$ mod $\mu$. We denote $\B \vee \C$ the smallest $\sigma$-algebra that contains $\B$ and $\C$. 
We say that $\B$ is a \emph{factor} $\sigma$-algebra (or $T$-\emph{invariant} $\sigma$-algebra) if $T^{-1}(\B) = \B$ mod $\mu$. Let $\B, \C$ and $\D$ be sub-$\s$-algebras of $\A$. We say that $\B$ and $\C$ are \emph{relatively independent over $\D$} if for any $\B$-measurable bounded function $B$ and $\C$-measurable bounded function $C$
$$\bbE[B C \, | \, \D] = \bbE[B \, | \, \D] \; \bbE[ C \, | \, \D] \; \text{almost surely}.$$
In this case, we write $\B \indep_\D \C$.

If we have two systems $\bfX := (X , \A, \mu, T)$ and $\bfY := (Y, \B, \nu, S)$, a \emph{factor map} is a measurable map $\pi: X \to Y$ such that $\pi_*\mu = \nu$ and $\pi \circ T = S \circ \pi$, $\mu$-almost surely. If such a map exists, we say that $\bfY$ is a \emph{factor} of $\bfX$ and we note $\sigma(\pi) := \pi^{-1}(\B)$ the $\sigma$-algebra generated by $\pi$. Conversely, we also say that $\bfX$ is an extension of $\bfY$ or that $\bfY$ is embedded in $\bfX$. Moreover, if there exist invariant sets $X_0 \subset X$ and $Y_0 \subset Y$ of full measure such that $\pi: X_0 \to Y_0$ is a bijection, then $\pi$ is an \emph{isomorphism} and we write $\bfX \cong \bfY$.

For a given factor $\s$-algebra $\B$, in general, the quadruple $(X, \B, \mu, T)$ is not a dynamical system since $\B$ need not separate points on $X$, and in this case $(X, \B, \mu)$ is not a Lebesgue probability space. However, there exist a dynamical system $\bfY$ and a factor map $\pi: \bfX \to \bfY$ such that $\s(\pi) = \B$ mod $\mu$. Moreover, this representation is not unique, but for a given factor $\B$, there is a canonical construction to get a system $\quotient{\bfX}{\B}$ and a factor map $p_\B: \bfX \to \quotient{\bfX}{\B}$ such that $\s(p_\B) = \B$ mod $\mu$ (see \cite[Chapter 2, Section 2]{glasner}).

A \emph{joining} of $\bfX := (X, \A, \mu, T)$ and $\bfZ := (Z, \C, \rho, R)$ is a $(T \times R)$-invariant measure $\l$ on $X \times Z$ whose marginals are $\mu$ and $\rho$. It yields the dynamical system
$$\bfX \times_\l \bfZ := (X \times Z, \A \otimes \C, \l, T \times R).$$
On this system, the coordinate projections are factor maps that project onto $\bfX$ and $\bfZ$ respectively. If it is not necessary to specify the measure, we will simply write $\bfX \times \bfZ$. For the product joining, we will use the notation $\bfX \otimes \bfZ := \bfX \times_{\mu \otimes \rho} \bfZ$. For the $n$-fold product self-joining, we will write $\bfX^{\otimes n}$. 

Let $\hat\bfX$ be system of which $\bfX$ is a factor, via a factor map $p_X: \hat\bfX \to \bfX$. Any object defined on $\bfX$ has a copy on $\hat\bfX$. For example, for a factor $\s$-algebra $\B$ (resp. a factor map $\pi: \bfX \to \bfY$), we will call a copy of $\B$ (resp. $\pi$) the lift under $p_X$, i.e. $p_X^{-1}(\B)$ (resp. $\pi \circ p_X$). When we have a factor map $\pi: \bfX \to \bfY$, we will also call the factor $\s$-algebra on $\hat\bfX$ generated by $\pi \circ p_X$ a <<copy of $\bfY$>> on $\hat\bfX$. When there is no confusion, we will still denote those copies $\B$, $\pi$, and $\bfY$. Finally, when $\hat\bfX$ is a self-joining of $\bfX$, all objects defined on $\bfX$ will have multiple copies on $\hat\bfX$: in this case we will add a number to identify each copy. For example, on $\bfX^{\times n}$, we will denote $\B_k := p_k^{-1}(\B)$, where $p_k$ is the projection on the $k$-th coordinate.

Assume that $\bfX$ and $\bfZ$ have a common factor, i.e. there are $\B \subset \A$ and $\tilde\B \subset \C$ such that $\quotient{\bfX}{\B} \cong \quotient{\bfZ}{\tilde\B}$. Equivalently, $\bfX$ and $\bfZ$ have a common factor if there are a system $\bfY := (Y, \D, \nu, S)$ and two factor maps $\pi: \bfX \to \bfY$ and $\tilde\pi: \bfZ \to \bfY$. In this case, decompose $\mu$ and $\l$ over $\pi$ and $\pi'$ respectively
$$\mu := \int_Y \mu_y d\nu(y) \; \text{ and } \; \l := \int_Y \l_y d\nu(y).$$
We define the relatively independent product of $\bfX$ and $\bfZ$ over this common factor from the joining
$$\mu \otimes_\bfY \l := \int_Y \mu_y \otimes \l_y d\nu(y).$$
We will denote the resulting system $\bfX \otimes_\bfY \bfZ$ or $\bfX \otimes_{(\B, \tilde\B)} \bfZ$. It has the following well known property (see \cite[Proposition 6.11]{glasner}):
\begin{lem} \label{lem:rel_prod}
Let $\B, \C \subset \A$ be two factor $\s$-algebras. Then $\B$ and $\C$ are independent if and only if, in the relatively independent product of $\bfX$ over $\B$, the two copies of $\C$ are independent.
\end{lem}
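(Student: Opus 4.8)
The plan is to reduce both sides of the equivalence to a single statement about the conditional expectation operator $\bbE[\,\cdot\,|\,\B]$, using the disintegration of $\mu$ over $\B$ that defines the relatively independent product. Fix a factor map $\pi : \bfX \to \bfY$ with $\s(\pi) = \B$, write $\nu := \pi_*\mu$, and use the disintegration $\mu = \int_Y \mu_y \, d\nu(y)$, so that for every bounded $\C$-measurable $f$ one has $\int_X f \, d\mu_y = \bbE[f \,|\, \B]$, read as a function of $y = \pi(x)$. Denote by $p_1, p_2$ the two coordinate projections of the self-joining, by $\C_1 := p_1^{-1}(\C)$ and $\C_2 := p_2^{-1}(\C)$ the two copies of $\C$, and by $\l := \int_Y \mu_y \otimes \mu_y \, d\nu(y)$ the joining measure. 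Since independence of $\s$-algebras is purely measure-theoretic, the dynamics play no role beyond guaranteeing that the self-joining is a genuine system.

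The first step is to recall the standard characterization of independence: $\B$ and $\C$ are independent if and only if $\bbE[f\,|\,\B] = \bbE[f]$ almost surely for every bounded $\C$-measurable $f$. One direction is immediate; the other follows by testing against bounded $\B$-measurable functions and invoking the defining property of conditional expectation. This recasts the left-hand event as a statement about the operator $\bbE[\,\cdot\,|\,\B]$.

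The second step is a direct covariance computation on the joining. For bounded $\C$-measurable $f, g$, the functions $f \circ p_1$ and $g \circ p_2$ are $\C_1$- and $\C_2$-measurable, and every such function arises this way. Integrating against $\l$ and using that the fibre measure factorizes as $\mu_y \otimes \mu_y$ gives
$$\bbE_\l\big[(f \circ p_1)(g \circ p_2)\big] = \int_Y \Big(\int f \, d\mu_y\Big)\Big(\int g \, d\mu_y\Big)\, d\nu(y) = \bbE_\mu\big[\bbE[f\,|\,\B]\,\bbE[g\,|\,\B]\big],$$
while both marginals of $\l$ equal $\mu$, so $\bbE_\l[f \circ p_1] = \bbE_\mu[f]$ and likewise for $g$. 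Hence $\C_1$ and $\C_2$ are independent under $\l$ exactly when
$$\bbE_\mu\big[\bbE[f\,|\,\B]\,\bbE[g\,|\,\B]\big] = \bbE_\mu[f]\,\bbE_\mu[g] \qquad \text{for all bounded } \C\text{-measurable } f, g. \quad (\star)$$

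It remains to show $(\star) \Leftrightarrow \B \indep \C$, which closes the chain of equivalences. The implication $\Leftarrow$ is trivial: if $\bbE[f\,|\,\B]$ and $\bbE[g\,|\,\B]$ are the constants $\bbE[f]$ and $\bbE[g]$, then $(\star)$ reads $\bbE[f]\,\bbE[g] = \bbE[f]\,\bbE[g]$. For the converse, the one point I expect to require a genuine argument is specializing to $g = f$: then $(\star)$ becomes $\bbE_\mu[\bbE[f\,|\,\B]^2] = \bbE_\mu[f]^2$, that is $\var\big(\bbE[f\,|\,\B]\big) = 0$, which forces $\bbE[f\,|\,\B] = \bbE[f]$ almost surely. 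By the characterization of the first step, this is precisely $\B \indep \C$. This equality-case step, equivalently the equality case of the conditional Jensen (or Cauchy–Schwarz) inequality, is the only non-formal ingredient; everything else is bookkeeping with the disintegration.
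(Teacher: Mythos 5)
Your argument is correct and follows essentially the same route as the paper's: disintegrate $\mu$ over $\B$, compute $\bbE_\l[(f\circ p_1)(g\circ p_2)] = \bbE_\mu\big[\bbE[f\,|\,\B]\,\bbE[g\,|\,\B]\big]$, and conclude via the equality case of Cauchy--Schwarz, i.e.\ $\var\big(\bbE[f\,|\,\B]\big)=0$. The only cosmetic difference is that the paper works directly with a single $\C$-measurable $f$ and its two copies, whereas you carry two functions $f,g$ and then specialize to $g=f$; the substance is identical.
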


\section{Product type, standardness and super-innovations for extensions} \label{sect:prod_type}

Let $\bfX := (X, \A, \mu, T)$ be a dynamical system. We call \emph{extension} a pair of factor $\s$-algebras $\tilde\A$, $\B$ such that $\B \subset \tilde\A$, and we denote it $\tilde\A \arr \B$. To avoid introducing too many notations, we will usually take $\tilde\A = \A$. 

For a given extension $\A \arr \B$, we know that there is a factor map $\pi: \bfX \to \bfY$, unique up to isomorphism, such that $\s(\pi)=\B$ mod $\mu$ (see Section \ref{sect:notations}). For such a factor map, we say that the extension $\A \arr \B$ is given by $\pi$, and we note it $\bfX \overset{\pi}{\to} \bfY$. This representation of extensions is useful in the more concrete cases, but for a general discussion, we find it more convenient to write extensions in terms of $T$-invariant $\s$-algebras.


We first need a notion of isomorphism:
\begin{defi} \label{defi:iso}
Let $\bfX_1 := (X_1, \A_1, \mu_1, T_1)$ and $\bfX_2 := (X_2, \A_2, \mu_2, T_2)$ be dynamical systems. Two extensions $\C \arr \D$ and $\I \arr \J$ on $\bfX_1$ and $\bfX_2$ respectively are \emph{isomorphic} if there exists an isomorphism $\Phi: \quotient{\bfX_1}{\C} \to \quotient{\bfX_2}{\I}$ such that $\Phi \D = \J$ mod $\mu_2$.

In the case where the extensions are given by two factor maps $\pi_1: \bfX_1 \to \bfY_1$ and $\pi_2: \bfX_2 \to \bfY_2$, they are isomorphic if there are two isomorphisms $\phi:  \bfX_1 \to \bfX_2$ and $\psi: \bfY_1 \to \bfY_2$ such that the following diagram is commutative:
$$\xymatrix{
\bfX_1 \ar[r]^{\phi} \ar[d]^{\pi_1} \commutatif & \bfX_2 \ar[d]^{\pi_2}\\
\bfY_1 \ar[r]^{\psi} & \bfY_2\\
}$$
\end{defi}
We write the following definitions in terms of extensions given by factor $\s$-algebras, but they can all be translated for extensions given by a factor map similarly to Definition \ref{defi:iso}.
We then recall the concept of immersion (see \cite{Laurent_standardness-cosiness}), which, in the theory of filtrations, expresses the idea of a <<sub-filtration>>:
\begin{defi}
An extension $\A \arr \B$ is \emph{immersed} in $\C \arr \D$ if we have $\A \subset \C$, $\B \subset \D$ and 
$$\A \indep_{\B} \D.$$
For two extensions $\A \arr \B$ and $\C \arr \D$ defined on two different dynamical systems, we say that $\A \arr \B$ is \emph{immersible} in $\C \arr \D$ if it is isomorphic to an extension immersed in $\C \arr \D$.
\end{defi}

\begin{defi} \label{def:prod_type}
An extension $\A \arr \B$ is of \emph{product type} if there exists a factor $\s$-algebra $\C$ such that $\B \indep \C$ and $\A = \B \vee \C$ mod $\mu$.
\end{defi}

We can finally define standard extensions:
\begin{defi} \label{defi:standard}
An extension is \emph{standard} if it is immersible in a product type extension. More explicitly, an extension $\A \arr \B$ defined on $\bfX := (X, \A, \mu, T)$ is standard if $\bfX$ can be embedded in a system $\hat\bfX$ on which there is an extension $\tilde\B \arr \B$ and a factor $\s$-algebra $\C$ such that $\A \indep_\B \tilde \B$, $\C$ is independent of $\tilde\B$ and $\A \subset \tilde\B \vee \C$.
\end{defi}

For example, we show below that all compact extensions are standard.
\begin{defi} \label{def:compact_ext}
Let $\bfX := (X, \A, \mu, T)$ and $\bfY := (Y, \B, \nu, S)$ be dynamical systems. An extension $\bfX \overset{\pi}{\to} \bfY$ is compact if, up to isomorphism, there exists a compact group $G$, equipped with its Haar measure $m_G$, and a measurable map $\phi: Y \to G$ such that $\bfX = (Y \times G, \nu \otimes m_G, S_\phi)$, where $S_\phi$ is given by
$$S_\phi : (y, g) \longmapsto (Sy, g \cdot \phi(y)).$$
We denote it $\bfY \ltimes_\phi G := \bfX$.
\end{defi}

\begin{lem} \label{lem:compact_standard}
A compact extension is standard.
\end{lem}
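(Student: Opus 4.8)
The plan is to realize the required product type extension on the relatively independent self-joining of $\bfX$ over $\bfY$, exploiting the group structure of the fibers. By Definition \ref{def:compact_ext} we may assume $\bfX = \bfY \ltimes_\phi G = (Y \times G, \nu \otimes m_G, S_\phi)$ with $S_\phi(y,g) = (Sy, g\cdot \phi(y))$, so that $\B$ is generated by the $Y$-coordinate and $\A$ is the full $\s$-algebra.

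First I would form $\hat\bfX := \bfX \otimes_\bfY \bfX$, whose points I write as $(y, g_1, g_2)$. Since the fibers of $\pi$ carry the measure $m_G$ independently of $y$, the measure on $\hat\bfX$ is $\nu \otimes m_G \otimes m_G$ and the dynamics is $(y, g_1, g_2) \mapsto (Sy, g_1\phi(y), g_2\phi(y))$. I embed $\bfX$ as the first copy, so that $\A = \s(y, g_1)$ and $\B = \s(y)$. I then set $\tilde\B := \s(y, g_2)$, the second copy of $\bfX$, which is an extension of $\B$, and $\C := \s(g_1 g_2^{-1})$.

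The three conditions of Definition \ref{defi:standard} are then checked in turn. The key algebraic point is that $g_1 g_2^{-1}$ is invariant under the dynamics: the cocycle cancels, since $(g_1\phi(y))(g_2\phi(y))^{-1} = g_1 g_2^{-1}$, so $\C$ is a genuine factor $\s$-algebra. By the very construction of the relatively independent product, $g_1$ and $g_2$ are conditionally independent given $y$, which gives $\A \indep_\B \tilde\B$. For $\C \indep \tilde\B$ I would condition on $(y, g_2)$: then $g_1$ is Haar-distributed, and by right-invariance of $m_G$ the element $g_1 g_2^{-1}$ is again Haar-distributed and independent of $(y, g_2)$, so $\C$ is independent of $\tilde\B$. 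Finally $\A \subset \tilde\B \vee \C$ because $g_1 = (g_1 g_2^{-1})\, g_2$ is measurable with respect to $\C \vee \tilde\B$. This exhibits $\A \arr \B$ as immersed in the product type extension $(\tilde\B \vee \C) \arr \tilde\B$, proving standardness.

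The only genuinely delicate point is the independence $\C \indep \tilde\B$, where one must use that Haar measure on a compact group is right-invariant, keeping the multiplication in the order $g_1 g_2^{-1}$ so that the non-abelian case is handled correctly; everything else is a direct verification.
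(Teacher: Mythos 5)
Your proof is correct and follows essentially the same route as the paper's: both pass to the relatively independent self-joining $\bfX \otimes_\bfY \bfX$, use the invariant factor generated by $g_1 g_2^{-1}$ as the complement, and invoke right-invariance of Haar measure to get its independence from the second copy $\s(y,g_2)$. The verification of the three conditions of Definition \ref{defi:standard} is carried out correctly, including the care taken with the order of multiplication in the non-abelian case.
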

\begin{proof}
Let $\bfX := \bfY \ltimes_\phi G$ be a compact extension of $\bfY$. Denote $G_1$ and $G_2$ two copies of $G$ and consider $\bfZ$, the system on $(Y \times G_1 \times G_2, \nu \otimes m_G \otimes m_G)$ given by the transformation
$$(y, g_1, g_2) \mapsto (Sy, g_1 \cdot \phi(y), g_2 \cdot \phi(y)),$$
or, in short, $\bfZ := \bfY \ltimes_{\phi \times \phi} (G_1 \otimes G_2)$. It is isomorphic to the $2$-fold relative product of $\bfX$ over $\bfY$ and can be viewed as in the following diagram

\vspace{5mm}
\centerline{\includegraphics{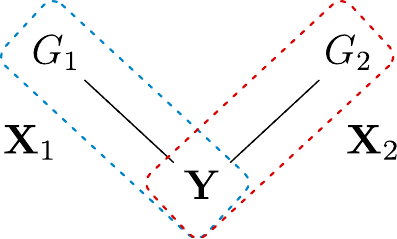}}
For $i= 1, 2$, denote $\bfX_i := \bfY \ltimes_\phi G_i$. We consider the factor map
$$\alpha: (y, g_1, g_2) \mapsto g_1 \cdot {g_2}^{-1},$$
which is independent of the coordinates $(y, g_2)$ that generate $\bfX_2$, because of the invariance of the Haar measure. It is a factor map onto the identity map on $G$ and satisfies 
$$\s(y, g_1, g_2) = \s(y, g_2) \vee \s(\a).$$
This proves that the extension $\bfZ \to \bfX_2$ is of product type. Finally, since the coordinates $y$, $g_1$ and $g_2$ are mutually independent, $\bfX_1$ and $\bfX_2$ are relatively independent over $\bfY$. This means that $\bfX_1 \to \bfY$ is immersed in $\bfZ \to \bfX_2$, and therefore it is standard.
\end{proof}

We also introduce an intermediate property between product type and standardness:
\begin{defi}
An extension $\A \arr \B$ \emph{admits a super-innovation} if there exist a system $\hat\bfX:= (\hat X, \hat \A, \hat\mu, \hat T)$ which extends $\bfX$ such that the extension $\hat\A \arr \B$ is of product type, i.e. there is a factor $\sigma$-algebra $\C$ on $\hat\bfX$ independent of $\B$ such that $\A \subset \hat\A = \B \vee \C$ mod $\hat\mu$.
\end{defi}
An extension that admits a super-innovation is standard because, keeping the notations from the definition, we have that $\A \arr \B$ is immersed in $\hat\A \arr \B$ and $\hat\A \arr \B$ is of product type.
\begin{rmq*} \label{rmq*:super-innov}
For an extension given by a factor map $\pi: \bfX \to \bfY$, we can rewrite the definition of standardness using super-innovations: $\bfX \overset{\pi}{\to} \bfY$ is standard if there exists an extension $\tilde\bfY \overset{\alpha}{\to} \bfY$ such that $\bfX \otimes_\bfY \tilde\bfY \overset{\tilde\pi}{\to} \tilde\bfY$ admits a super-innovation. 

Indeed, if $\bfX \otimes_\bfY \tilde\bfY \overset{\tilde\pi}{\to} \tilde\bfY$ has a super-innovation, we have a system $\bfZ$ such that $\bfX \otimes_\bfY \tilde\bfY \overset{\tilde\pi}{\to} \tilde\bfY$ is immersible in $\bfZ \otimes \tilde\bfY \overset{p}{\to} \tilde\bfY$. Moreover, since, in $\bfX \otimes_\bfY \tilde\bfY$, we have that $\bfX$ is relatively independent of $\tilde\bfY$ over $\bfY$, we get that $\bfX \overset{\pi}{\to} \bfY$ is immersible in $\bfX \otimes_\bfY \tilde\bfY \overset{\tilde\pi}{\to} \tilde\bfY$. Therefore, $\bfX \overset{\pi}{\to} \bfY$ is immersible in $\bfZ \otimes \tilde\bfY \overset{p}{\to} \tilde\bfY$, which means it is standard.

Conversely, if $\bfX \overset{\pi}{\to} \bfY$ is standard, there are two systems $\bfZ$ and $\tilde\bfY$ such that $\bfX \overset{\pi}{\to} \bfY$ is immersible in $\bfZ \otimes \tilde\bfY \overset{p}{\to} \tilde\bfY$. This means we have two factor maps $\a$ and $\beta$ and the following commutative diagram 
$$\xymatrix{
\bfZ \otimes \tilde\bfY \ar[r]^-{p} \ar[d]^{\beta} \commutatif & \tilde \bfY \ar[d]^{\a}\\
\bfX \ar[r]^{\pi} & \bfY\\
},$$
in which $\bfX$ and $\tilde \bfY$ are relatively independent over $\bfY$. Therefore the product map $\beta \times p: Z \times \tilde Y \to X \times \tilde Y$ is a factor map from $\bfZ \otimes \tilde \bfY$ onto $\bfX \otimes_\bfY \tilde\bfY$ which sends $\tilde \bfY$ onto $\tilde\bfY$. This means that $\bfZ$ is a super-innovation for $\bfX \otimes_\bfY \tilde\bfY \overset{\tilde\pi}{\to} \tilde\bfY$.
\end{rmq*}

\begin{prop} \label{prop:super-innov_static}
If $T$ acts as the identity map on $\A$, then $\A \arr \B$ admits a super-innovation.
\end{prop}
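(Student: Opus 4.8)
The plan is to reduce to the <<static case>> discussed in the introduction and then exploit the triviality of the dynamics to upgrade the static super-innovation into a dynamical one. Since $T$ acts as the identity on the full $\s$-algebra $\A$ of the Lebesgue space $(X, \A, \mu)$, the transformation $T$ equals $\mathrm{Id}$ on $\bfX$ mod $\mu$, and every sub-$\s$-algebra of $\A$ — in particular $\B$ — is automatically a factor $\s$-algebra. We are therefore in exactly the situation of \eqref{eq:super-innov}: by the static super-innovation result \cite[Proposition 3.25]{Laurent_standardness-cosiness}, we may embed $(X, \A, \mu)$ into a larger Lebesgue space $(\tilde X, \tilde\A, \tilde\mu)$ carrying a $\s$-algebra $\C \subset \tilde\A$ with $\C \indep \B$ and $\A \subset \B \vee \C$ mod $\tilde\mu$.

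Next I would build the candidate extension $\hat\bfX$ from this data. Set $\hat\A := \B \vee \C$, so that $\A \subset \hat\A \subset \tilde\A$ mod $\tilde\mu$; let $\hat\mu$ be the restriction of $\tilde\mu$ to $\hat\A$, pass to the associated Lebesgue realization $\hat X$, and equip it with the identity transformation $\hat T := \mathrm{Id}$. This yields a dynamical system $\hat\bfX := (\hat X, \hat\A, \hat\mu, \hat T)$. The inclusion $\A \subset \hat\A$ produces a factor map $p_X : \hat\bfX \to \bfX$, and since both $T$ and $\hat T$ are the identity, $p_X$ trivially intertwines the two transformations; hence $\hat\bfX$ genuinely extends $\bfX$.

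It then remains to verify that $\hat\A \arr \B$ is of product type with super-innovation $\C$. Because $\hat T = \mathrm{Id}$, every sub-$\s$-algebra of $\hat\A$ is $\hat T$-invariant, so $\C$ is a factor $\s$-algebra of $\hat\bfX$. The independence $\B \indep \C$ and the equality $\hat\A = \B \vee \C$ hold by construction, together with $\A \subset \hat\A$, which is precisely the definition of a super-innovation for $\A \arr \B$.

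I do not expect a deep obstacle here: the whole content is that the identity transformation renders every $\s$-algebra invariant, so a static super-innovation is automatically a dynamical one, with no compatibility to enforce between $\C$ and the dynamics. The only points requiring care are measure-theoretic bookkeeping — checking that the enlarged space furnished by \cite[Proposition 3.25]{Laurent_standardness-cosiness} can be realized as a Lebesgue space so that $\hat\bfX$ is a bona fide dynamical system, and confirming that the inclusion $\A \subset \hat\A$ descends to a legitimate factor map $p_X$ between the corresponding quotient systems.
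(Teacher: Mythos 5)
Your proof is correct and follows exactly the route the paper takes: its entire proof of this proposition is the one-line citation of \cite[Proposition 3.25]{Laurent_standardness-cosiness}, and your argument just makes explicit the (routine) observation that when $\hat T = \mathrm{Id}$ every sub-$\s$-algebra is invariant, so the static super-innovation is automatically a dynamical one.
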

\begin{proof}
It follows from \cite[Proposition 3.25]{Laurent_standardness-cosiness}.
\end{proof}

\begin{rmq}
Super-innovations give an intermediate property between product type extensions and standardness. Let us give examples here to show that it is not equivalent to either of these properties. We can sum that up in the following diagram:
$$\begin{array}{ccccc}
\colorbox{lightgray}{Product type} & \begin{array}{c} \Rightarrow \\ \nLeftarrow \\ \end{array} & \colorbox{lightgray}{Admits a super-innovation} &  \begin{array}{c} \Rightarrow  \\ \nLeftarrow \\ \end{array} & \colorbox{lightgray}{Standard}\\
\end{array}$$

\subsubsection*{A standard extension with no super-innovation}
As we have already mentioned, compact extensions are standard, but we will show that, in many cases, they do not admit a super-innovation. For a concrete example, consider the Anzai product given by the map
$$T : (x, y) \mapsto (x + \alpha, y + x), \text{ with } \alpha \in \bbR \backslash \bbQ,$$
on the torus equipped with the Lebesgue measure. It is a compact, and therefore standard, extension of the rotation of angle $\alpha$, but we will see that it has no super-innovation (Proposition \ref{prop:extension_not_confined} and Proposition \ref{prop:non-WM_compact_ext}).

\subsubsection*{An extension which is not of product type but that admits a super-innovation}
There already exist examples in the static case of extensions admitting super-innovations without being of product type, but here we build an \emph{ergodic} example. We will denote $\bfZ := (\{a, b\}, \l, R)$ the two points system on $\{a, b\}$ (we could replace $\bfZ$ by any automorphism with no square root). Then take the disjoint union of two copies $Z_1$ and $Z_2$ of $Z$, and consider the transformation $T$ which sends $Z_1$ onto $Z_2$ via $R$ and sends $Z_2$ onto $Z_1$ via the identity map. Formally, denote $\bfY := (\{1, 2\}, \nu, S)$ the two points system on $\{1, 2\}$, and define $\bfX$ to be the system on the product space $(\{1, 2\} \times \{a, b\}, \nu \otimes \l)$ given by the map
$$T(i, z) := \left\{
\begin{array}{cl}
(2, Rz) & \text{ if } i = 1\\
(1, z) & \text{ if } i = 2\\
\end{array} \right..
$$
As we see on the following diagram, $\bfX$ is simply a cyclic four points system on $\{1, 2\} \times \{a, b\}$:

\centerline{\includegraphics[scale=1.3]{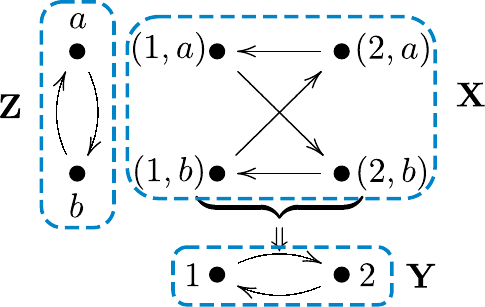}}

If $\A$ is the full $\sigma$-algebra on $\bfX$ and $\B$ the factor generated by the projection on $\{1, 2\}$, the extension $\A \arr \B$ admits a super-innovation. Indeed, consider the transformation $\hat R: (z_1, z_2) \mapsto (z_2, Rz_1)$ and the system $\hat\bfZ := (Z \times Z, \l \otimes \l, \hat R)$. Finally, define $\hat\bfX$ as the direct product of the two points system on $\{1, 2\}$, i.e. $\bfY$, and $\hat\bfZ$, which extends $\bfX$ via the factor map
$$\begin{array}{cccc}
\pi: & \hat\bfX & \to & \bfX\\
& (i, z_1, z_2) & \longmapsto & (i, z_i)\\
\end{array}.$$
An orbit on $\hat\bfX$ goes as follows
$$\begin{array}{c}
\underbrace{\xymatrix{
{\left(\begin{array}{c} a \\ b \\ 1 \\ \end{array}\right)} \ar@{|->}[r]^{S \times \hat R} & {\left(\begin{array}{c} b \\ b \\ 2 \\ \end{array}\right)} \ar@{|->}[r]^{S \times \hat R} & {\left(\begin{array}{c} b \\ a \\ 1 \\ \end{array}\right)} \ar@{|->}[r]^{S \times \hat R} & {\left(\begin{array}{c} a \\ a \\ 2 \\ \end{array}\right)} \ar@{|->}[r]^{S \times \hat R} & {\left(\begin{array}{c} a \\ b \\ 1 \\ \end{array}\right)}
}}\\
\includegraphics[scale=0.4]{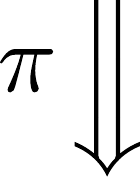} \hspace{0.33cm} \\
\xymatrix{
{\left(\begin{array}{c} a \\ 1 \\ \end{array}\right)} \ar@{|->}[r]^{T} & {\left(\begin{array}{c} b \\ 2 \\ \end{array}\right)} \ar@{|->}[r]^{T} & {\left(\begin{array}{c} b \\ 1 \\ \end{array}\right)} \ar@{|->}[r]^{T} & {\left(\begin{array}{c} a \\ 2 \\ \end{array}\right)} \ar@{|->}[r]^{T} & {\left(\begin{array}{c} a \\ 1 \\ \end{array}\right)}
}\\
\end{array}$$
It is then clear that $\hat\bfZ$ gives us the desired super-innovation.

However, the extension $\A \arr \B$ is not of product type. Indeed, if it were, there would exist a system $\bfW := (W, \gamma, Q)$ and an isomorphism $\Phi: \bfY \otimes \bfW \to \bfX$ which sends $\bfY$ onto $\bfY$. By that, we mean that there would exist two measure preserving bijections $\phi_i: W \to Z$ for $i = 1 , 2$ such that 
$$\Phi(i, w) = (i, \phi_i(w)) \text{ almost surely}.$$
Then, the identity $\Phi \circ (S \times Q) = T \circ \Phi$ would become:
$$\phi_2^{-1} \circ R \circ \phi_1 = Q = \phi_1^{-1} \circ \phi_2.$$
This would give 
$$R = \phi_2 \circ \phi_1^{-1} \circ \phi_2 \circ \phi_1^{-1} = \phi \circ \phi,$$
with $\phi := \phi_2 \circ \phi_1^{-1}$. Since $\phi \in \aut(Z, \l)$, this would contradict the fact that $R$ has no square root.
\end{rmq}

\section{Confined extensions} \label{sect:confined}

In trying to build non-standard extensions, we first look for extensions with no super-innovations. The notion of \emph{confined extension} that we introduce in this section follows that purpose, relying on the non-trivial joining properties associated to super-innovations. The link between confined extensions and super-innovations will be detailed in Section \ref{sect:confined_no_super-innov}.

\subsection{Definitions, basic properties and examples}

\begin{defi} \label{def:confined}
Let $\bfX := (X, \A, \mu, T)$ be a dynamical system and $\B$ be a factor $\s$-algebra. The extension $\A \arr \B$ is said to be \emph{confined} if it satisfies one of the following equivalent properties:
\begin{enumerate}[label = (\roman*)]
\item every $2$-fold self-joining of $\bfX$ in which the two copies of $\B$ are independent is the product joining;
\item for every system $\bfZ$ and every joining of $\bfX$ and $\bfZ$ in which the copies of $\B$ and $\bfZ$ are independent, the copies of $\A$ and $\bfZ$ are independent;
\item for every $n \in \bbN^* \cup \{+ \infty\}$, every $n$-fold self-joining of $\bfX$ in which the $n$ copies of $\B$ are mutually independent is the $n$-fold product joining.
\end{enumerate}
\end{defi}

We will prove the equivalence in Proposition \ref{prop:alt_def_confined}. Let us present here some examples of confined extensions:

\begin{description} [leftmargin=*]
\item[Compact extensions.] This is the most well known family of extensions, and it is therefore natural to start our study with them. We have a criterion for the confinement of compact extensions, which we state below. We thank Mariusz Lema\'ncyzk  for suggesting this criterion. Let $\bfX := (X, \A, \mu, T)$, $\bfY := (Y, \B, \nu, S)$ and $\bfX \overset{\pi}{\to} \bfY$ be a compact extension, i.e. $\bfX = \bfY \ltimes_\phi G$, using notations from Definition \ref{def:compact_ext}. Consider the ergodic decomposition of $\nu \otimes \nu$:
$$\nu \otimes \nu := \int \rho_\o \, d \, \bbP(\o).$$
This gives a decomposition (not necessarily ergodic) of $\nu \otimes \nu \otimes m_G \otimes m_G$:
$$\nu \otimes \nu \otimes m_G \otimes m_G = \int \rho_\o \otimes m_G \otimes m_G \, d \, \bbP(\o).$$
We can switch the coordinates from $(Y \times G) \times (Y \times G)$ to $Y \times Y \times G \times G$ so that $S_\phi \times S_\phi$ become 
$$(S \times S)_{\phi \times \phi}: (y_1, y_2, g_1, g_2) \mapsto (Sy_1, Sy_2, g_1 \cdot \phi(y_1), g_2 \cdot \phi(y_2)).$$
Each measure $\rho_\o \otimes m_G \otimes m_G$ is invariant under $(S \times S)_{\phi \times \phi}$.
\begin{thm} \label{thm:confinement_compact}
The following are equivalent:
\begin{enumerate} [label= (\roman*)]
\item The extension $\bfX \overset{\pi}{\to} \bfY$ is confined;
\item The product extension $\bfX \otimes \bfX \overset{\pi \times \pi}{\to} \bfY \otimes \bfY$ is relatively ergodic;
\item For $\bbP$-almost every $\o$ the measure $\rho_\o \otimes m_G \otimes m_G$ is ergodic under $(S \times S)_{\phi \times \phi}$.
\end{enumerate}
In particular, weakly mixing compact extensions are confined.
\end{thm}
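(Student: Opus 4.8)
The plan is to prove the cycle of implications (i)$\Rightarrow$(ii)$\Rightarrow$(iii)$\Rightarrow$(i), exploiting throughout the action of $G$ on $\bfX$ by left translations $L_a : (y,g) \mapsto (y, ag)$, which preserves $\mu$ and commutes with $T = S_\phi$; correspondingly $G \times G$ acts on $\bfX \otimes \bfX$ and fixes each fibre measure $\rho_\o \otimes m_G \otimes m_G$. The equivalence (ii)$\Leftrightarrow$(iii) I would treat first and regard as routine: relative ergodicity of $\bfX \otimes \bfX \to \bfY \otimes \bfY$ means that every $(T \times T)$-invariant function is $\B_1 \vee \B_2$-measurable, and decomposing over the ergodic components $\rho_\o$ of $\nu \otimes \nu$ turns this into the statement that, for $\bbP$-almost every $\o$, the fibre system $(\rho_\o \otimes m_G \otimes m_G, (S \times S)_{\phi \times \phi})$ carries no non-constant invariant function. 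Indeed, since $\rho_\o$ is ergodic, a base-measurable invariant function is $\rho_\o$-almost surely constant, so an invariant function witnesses non-ergodicity of the fibre exactly when it fails to be base-measurable.

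For (iii)$\Rightarrow$(i) I would argue as follows. Let $\lambda$ be any self-joining of $\bfX$ whose two copies of $\B$ are independent, so that $\lambda$ projects onto $\nu \otimes \nu$ on the base. Disintegrating $\lambda$ over the invariant $\s$-algebra of the base gives $\lambda = \int \lambda_\o \, d\bbP(\o)$, where each $\lambda_\o$ is $(T \times T)$-invariant and projects onto the ergodic component $\rho_\o$. Averaging $\lambda_\o$ over the $G \times G$-action yields a measure that is left-$(G \times G)$-invariant and still sits over $\rho_\o$, hence equals $\rho_\o \otimes m_G \otimes m_G$. By (iii) this average is ergodic, so it is an extreme point of the simplex of $(T \times T)$-invariant probability measures; an extreme point written as the barycentre $\int_{G \times G} (L_{a_1, a_2})_* \lambda_\o \, d(m_G \otimes m_G)$ of invariant probability measures must coincide with $(L_{a_1, a_2})_* \lambda_\o$ for almost every $(a_1, a_2)$, and since $\rho_\o \otimes m_G \otimes m_G$ is itself $(G \times G)$-invariant this forces $\lambda_\o = \rho_\o \otimes m_G \otimes m_G$. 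Integrating in $\o$ gives $\lambda = \mu \otimes \mu$, which is confinement.

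The implication (i)$\Rightarrow$(ii) is where the real work lies, and I would prove its contrapositive by manufacturing an explicit non-product self-joining with independent base out of any invariant function violating relative ergodicity. The natural attempt is to perturb the product measure, setting $d\lambda = (1 + \eps F) \, d(\mu \otimes \mu)$ for a bounded real invariant $F$ with $\bbE[F \mid \B_1 \vee \B_2] = 0$; this is automatically $(T \times T)$-invariant, a probability for small $\eps$, and has independent base, so the sole obstruction to its being a genuine self-joining is preservation of the single-coordinate marginals, i.e. $\bbE[F \mid \A_1] = 0 = \bbE[F \mid \A_2]$. This marginal condition is the main obstacle. To overcome it I would first show that confinement forces the original extension $\bfX \to \bfY$ to be relatively ergodic: if it were not, there would be a bounded real invariant $f$ on $\bfX$ with $\bbE[f \mid \B] = 0$ and $f \neq 0$, and then $d\lambda = (1 + \eps \, f \otimes f) \, d(\mu \otimes \mu)$ is readily checked to be a self-joining with independent base different from the product, contradicting (i). Granting relative ergodicity of $\bfX \to \bfY$, the conditional expectation $\bbE[F \mid \A_1]$ is an invariant function of the first coordinate, hence $\B$-measurable, while $\bbE[F \mid \B_1 \vee \B_2] = 0$ forces its conditional expectation on $\B$ to vanish; being $\B$-measurable, it is therefore zero, and likewise for the second coordinate. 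Thus the perturbed $\lambda$ is a bona fide self-joining with independent base and $\lambda \neq \mu \otimes \mu$, contradicting confinement and proving (ii).

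Finally, the closing assertion follows at once: if $\bfX$ is weakly mixing then $\bfX \otimes \bfX$ is ergodic, so every $(T \times T)$-invariant function is constant and in particular $\B_1 \vee \B_2$-measurable; thus (ii) holds and, by the cycle of implications, the extension is confined.
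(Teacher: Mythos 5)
Your proposal is correct, but it departs from the paper's argument at two of the three implications, in each case replacing an appeal to external machinery by a self-contained computation. For (i)$\Rightarrow$(ii) the paper observes that the product of a confined extension with itself is again confined and then invokes Lemma \ref{lem:confined_rel_ergodic} (confined $\Rightarrow$ relatively ergodic), whose proof goes through super-innovations of the static extension $\I_X \arr \I_B$; you instead argue by contradiction with an explicit perturbation $d\l = (1+\eps F)\,d(\mu\otimes\mu)$ of the product joining, first using the auxiliary perturbation $1+\eps\, f\otimes f$ to establish relative ergodicity of $\bfX\overset{\pi}{\to}\bfY$ itself, and then checking that this relative ergodicity kills the marginals $\bbE[F\mid\A_1]$, $\bbE[F\mid\A_2]$. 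This is more elementary, avoids the super-innovation machinery entirely, and incidentally yields an alternative proof of Lemma \ref{lem:confined_rel_ergodic}. For (iii)$\Rightarrow$(i) the paper takes the ergodic decomposition of the joining $\l$ and applies Furstenberg's relative unique ergodicity lemma (Lemma \ref{lem:rel_unique_ergo}) component by component; you disintegrate over the invariant $\s$-algebra of the base and combine Haar averaging over the commuting left $G\times G$-action with extremality of ergodic measures --- in effect re-proving Furstenberg's lemma inline, which is where the compactness of the extension enters your argument just as it does the paper's. The middle equivalence (ii)$\Leftrightarrow$(iii) is the same routine ergodic-decomposition argument in both versions (the paper packages it as Lemma \ref{lem:compact_RUE}); the only point you gloss over is the measurable-selection step needed to pass from fibrewise non-ergodicity on a positive-measure set of $\o$ to a single global invariant function that is not base-measurable, which is standard. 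Your reading of the final assertion (that it is $\bfX$, not merely $\bfY$, that is weakly mixing) matches the paper's intent, and the one-line deduction from (ii) is correct.
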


In Section \ref{sect:stable_extensions}, we will study the link between confinement and Robinson's notion of \emph{stable extensions}. We can deduce the weakly mixing case of Theorem \ref{thm:confinement_compact} from Robinson's work by combining \cite[Corollary 3.8]{stable_extensions} and Lemma \ref{lem:stable-confined} (i). See Section \ref{sect:compact} for a full proof of the theorem. 
Our theorem is easy to use for weakly mixing compact extensions, but for the non-weakly mixing case, our condition is more involved. In Section \ref{sect:compact}, we give an application in the non-weakly mixing case with an Anzai skew-product (see Proposition \ref{prop:non-WM_compact_ext}). We also give an example of a non-confined ergodic compact extension, illustrating that some condition is still necessary for compact extensions to be confined.


\item[$T, T^{-1}$ transformations.] Using arguments from Lema\'nczyk and Lesigne \cite{erg_rokhlin_cocycles}, we show that, provided $T^2$ is ergodic, $T, T^{-1}$ transformations yield confined extensions (see Theorem \ref{thm:TT-1_confined}). Moreover, if $T$ is weakly mixing, the $T, T^{-1}$ extension is confined and not compact (see Corollary \ref{cor:TT-1_relWM}). Finally we show that, with more assumptions on $T$, we get an additional property: it is not \emph{standard} (see Theorem \ref{thm:hyper-confined_extension}).

\item[Flow extensions.] A generic flow extension of a weakly mixing system is confined (see \cite[Section 8]{stable_extensions} for the definitions). We do not discuss those examples in detail here, but Robinson showed in \cite{stable_extensions} that such extensions have the \emph{GW-property} and we show below that this property yields confinement (see Proposition \ref{prop:GW_confined}).

\item[Totally confined systems.] We can build systems with a surprising property: they are a confined extension of any non-trivial factor. We call them \emph{totally confined systems}. The existence of such systems with non-trivial factors follows from the work done in \cite{JP_systems}: a system verifying the JP-property is totally confined. Building on similar arguments, it can be shown that any system whose reduced maximal spectral type verifies that $\s * \s$ is disjoint from $\s$ is totally confined.
\end{description}

The key proposition is the following, where we prove the equivalence from Definition \ref{def:confined}:
\begin{prop} \label{prop:alt_def_confined}
Properties (i), (ii) and (iii) in Definition \ref{def:confined} are equivalent. Therefore, either of those properties can be used as a definition of confined extensions.
\end{prop}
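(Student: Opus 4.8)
The plan is to establish the cycle of implications $(i)\Rightarrow(ii)\Rightarrow(iii)\Rightarrow(i)$. The implication $(iii)\Rightarrow(i)$ is immediate, since specializing $(iii)$ to $n=2$ is exactly statement $(i)$. So the real content is $(i)\Rightarrow(ii)$ and $(ii)\Rightarrow(iii)$. Throughout I would phrase independence of $\s$-algebras $\A$ and $\C$ under a joining as the assertion that $\bbE[f\mid\C]=\bbE[f]$ for every bounded $\A$-measurable $f$, which is the form best suited to the second-moment computations below.

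\medskip

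\noindent\emph{Proof of $(i)\Rightarrow(ii)$.} Let $\bfZ:=(Z,\C,\rho,R)$ and let $\l$ be a joining of $\bfX$ and $\bfZ$ in which the copy of $\B$ is independent of $\C$; I want to upgrade this to independence of $\A$ and $\C$. The key construction is the relatively independent self-joining of $\bfX\times_\l\bfZ$ over the common factor $\bfZ$: disintegrating $\l=\int\l_z\,d\rho(z)$ over $\C$, I form $\tilde\l:=\int \l_z\otimes\l_z\,d\rho(z)$ on $X\times X\times Z$, under which the two copies of $\bfX$ are relatively independent over $\C$ and each $(X,Z)$-marginal equals $\l$. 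Its restriction to the two $X$-coordinates is a self-joining $\l_2$ of $\bfX$. A short conditional-expectation computation using $\B\indep\C$ shows that under $\l_2$ the two copies of $\B$ are independent: for bounded $\B$-measurable $f,g$ one has $\bbE_{\tilde\l}[f_1g_2]=\bbE[\bbE[f_1\mid\C]\,\bbE[g_2\mid\C]]=\bbE_\mu[f]\,\bbE_\mu[g]$. Hence $(i)$ forces $\l_2=\mu\otimes\mu$, i.e. the two copies of $\A$ are independent under $\tilde\l$. Finally I descend to $(ii)$ by a variance argument: for bounded $\A$-measurable $f$ with $\phi:=\bbE_\l[f\mid\C]$, relative independence over $\C$ gives $\bbE_{\tilde\l}[f_1f_2\mid\C]=\phi^2$, while independence of the two copies of $\A$ gives $\bbE_{\tilde\l}[f_1f_2]=\bbE_\mu[f]^2=\bbE_\rho[\phi]^2$; comparing yields $\var_\rho(\phi)=0$, so $\phi$ is a.s.\ constant and $\A\indep\C$.

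\medskip

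\noindent\emph{Proof of $(ii)\Rightarrow(iii)$.} For finite $n$ I would argue by induction, upgrading the copies one at a time. Let $\l$ be an $n$-fold self-joining with $\B_1,\dots,\B_n$ mutually independent. I show inductively that $\A_1,\dots,\A_k,\B_{k+1},\dots,\B_n$ are mutually independent; the case $k=0$ is the hypothesis, and $k=n$ is the desired conclusion $\l=\mu^{\otimes n}$. For the inductive step, set $\C:=\A_1\vee\cdots\vee\A_{k-1}\vee\B_{k+1}\vee\cdots\vee\B_n$, a factor $\s$-algebra. Mutual independence at stage $k-1$ gives $\B_k\indep\C$, so viewing $\l$ as a joining of $\bfX$ (the $k$-th coordinate) with the factor $\bfZ$ generated by $\C$, property $(ii)$ yields $\A_k\indep\C$; combined with the mutual independence of the remaining members of $\C$, this is exactly mutual independence at stage $k$. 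The case $n=+\infty$ then follows immediately, since each finite sub-collection of coordinates is a finite self-joining whose copies of $\B$ are mutually independent, hence a product by the finite case, and a measure is determined by its finite-dimensional marginals.

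\medskip

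I expect the main obstacle to be the descent step inside $(i)\Rightarrow(ii)$: one must recognize that the correct object is the relatively independent \emph{square} over $\bfZ$ rather than a direct manipulation of $\l$, verify that the copies of $\B$ genuinely become independent there so that $(i)$ can be invoked, and then extract $\A\indep\C$ from a second-moment identity. The induction in $(ii)\Rightarrow(iii)$ is more routine once one sees that $(ii)$ can be applied with $\bfZ$ taken to be the factor built from the other coordinates, the only point of care being that $\C$ is $T$-invariant so that this is a legitimate joining.
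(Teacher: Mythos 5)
Your proof is correct and follows essentially the same route as the paper: the relatively independent square over $\bfZ$ for (i)$\Rightarrow$(ii), the coordinate-by-coordinate induction for (ii)$\Rightarrow$(iii), and the specialization $n=2$ for (iii)$\Rightarrow$(i). The only difference is cosmetic: the paper delegates both the independence of the two copies of $\B$ and the final descent to $\A\indep\C$ to Lemma \ref{lem:rel_prod}, whereas you inline the corresponding second-moment and variance computations.
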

\begin{proof}
\begin{description} [leftmargin=*]
\item[(i) $\Rightarrow$ (ii).]
Assume that the extension $\A \arr \B$ satisfies property (i). Let $\bfZ$ be a dynamical system and $\hat\bfX := \bfX \times \bfZ$ a joining of $\bfX$ and $\bfZ$ for which $\B$ and $\bfZ$ are independent. We then look at the system $\tilde\bfX$ built as the relatively independent product of $\hat\bfX$ over $\bfZ$. Because of our assumption on $\hat\bfX$ and Lemma \ref{lem:rel_prod}, the copies of $\B$ in $\tilde\bfX$ are independent. Therefore, by property (i), the copies of $\A$ are independent. However, using again Lemma \ref{lem:rel_prod}, this is only possible if $\A$ is independent of $\bfZ$ in $\hat\bfX$. And this gives us property (ii).
\item[(ii) $\Rightarrow$ (iii).]
Assume that $\A \arr \B$ satisfies property (ii). Let $n \in \bbN$ (if $n = +\infty$, we only need to show that finite families of copies of $\A$ are mutually independent) and let $\bfZ := \bfX \times \cdots \times \bfX$ be a $n$-fold joining of $\bfX$ for which the copies of $\B$ are independent. We show by induction on $k$ that the family $(\A_1, ..., \A_k,  \B_{k+1}, ..., \B_n)$ is mutually independent. The case $k = 0$ is simply our assumption on $\bfZ$. If the property is true for $k$, then $\B_{k+1}$ is independent of $(\A_1, ..., \A_k,  \B_{k+2}, ..., \B_n)$, therefore, using (ii), we get that $\A_{k+1}$ is independent of $(\A_1, ..., \A_k, \B_{k+2}, ..., \B_n)$. Since the family $(\A_1, ..., \A_k, \B_{k+2}, ..., \B_n)$ is mutually independent, it implies that $(\A_1, ..., \A_k, \A_{k+1}, \B_{k+2}, ..., \B_n)$ is mutually independent. The case $k=n$ ends our proof.
\item[(iii) $\Rightarrow$ (i).] Simply take $n = 2$.
\end{description}
\end{proof}

Let us state some simple manipulations possible with confined extensions:
\begin{prop} \label{prop:composition_confined_extensions}
Let $\C \subset \B \subset \A$ be three factor $\s$-algebras. The extension $\A \arr \C$ is confined if and only if the extensions $\A \arr \B$ and $\B \arr \C$ are both confined. 
\end{prop}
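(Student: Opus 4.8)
The plan is to work throughout with characterization (ii) of Definition \ref{def:confined}, i.e. with joinings of a system with an \emph{external} system $\bfZ$ in which a copy of the bottom factor is independent of $\bfZ$, and to deduce the independence of the top factor. All three extensions are tested against such joinings: for $\A \arr \C$ and $\A \arr \B$ the relevant objects are joinings of $\bfX$ with $\bfZ$ (the top algebra is the full algebra $\A$), while for $\B \arr \C$ they are joinings of $\quotient{\bfX}{\B}$ with $\bfZ$, since there the top algebra is $\B$.

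The two implications flowing from ``$\A \arr \C$ confined'' are almost immediate from the inclusions $\C \subset \B \subset \A$. For $\A \arr \B$: given a joining of $\bfX$ with $\bfZ$ in which $\B \indep \bfZ$, the inclusion $\C \subset \B$ forces $\C \indep \bfZ$, so confinement of $\A \arr \C$ yields $\A \indep \bfZ$. For $\B \arr \C$: I would start from a joining $\lambda$ of $\quotient{\bfX}{\B}$ with $\bfZ$ in which $\C \indep \bfZ$, lift it (see the last paragraph) to a joining $\hat\lambda$ of $\bfX$ with $\bfZ$ still satisfying $\C \indep \bfZ$, apply confinement of $\A \arr \C$ to get $\A \indep_{\hat\lambda} \bfZ$, hence $\B \indep_{\hat\lambda} \bfZ$ because $\B \subset \A$, and finally push this independence back down to $\lambda$ on $\quotient{\bfX}{\B}$.

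For the converse I would simply chain the two confinements. Given a joining $\lambda$ of $\bfX$ with $\bfZ$ in which $\C \indep \bfZ$, I first project $\lambda$ through $p_\B \times \id$ to a joining $\lambda'$ of $\quotient{\bfX}{\B}$ with $\bfZ$; the independence $\C \indep \bfZ$ survives this projection, so confinement of $\B \arr \C$ gives $\B \indep_{\lambda'} \bfZ$. Since the projection is faithful on $\B \vee \sigma(\bfZ)$, this is the same as $\B \indep_\lambda \bfZ$, and confinement of $\A \arr \B$ applied to $\lambda$ then yields $\A \indep_\lambda \bfZ$, which is exactly what confinement of $\A \arr \C$ requires.

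The only genuinely technical point, and the step I expect to be the main obstacle, is the lift of a joining of $\quotient{\bfX}{\B}$ with $\bfZ$ to a joining of $\bfX$ with $\bfZ$ used in the second implication. I would realise it as the relatively independent product over the common factor $\quotient{\bfX}{\B}$: writing the disintegrations $\mu = \int \mu_y \, d\nu(y)$ over $\B$ and $\lambda = \int \lambda_y \, d\nu(y)$ over the $\quotient{\bfX}{\B}$-coordinate, the measure $\hat\lambda := \int \mu_y \otimes \lambda_y \, d\nu(y)$ is a $(T \times R)$-invariant joining of $\bfX$ and $\bfZ$ that projects back to $\lambda$. A short computation then shows that every $\B$-measurable function has the same joint law with $\bfZ$ under $\hat\lambda$ as under $\lambda$, so that $\C \indep \bfZ$ is preserved by the lift and $\B \indep \bfZ$ descends after confinement of $\A \arr \C$ is applied. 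This is exactly the mechanism behind Lemma \ref{lem:rel_prod}, so beyond the bookkeeping of conditional measures no new difficulty should arise.
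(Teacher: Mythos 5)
Your proof is correct and is essentially the argument the paper has in mind: its own proof of this proposition is just the one-line ``This follows from the definitions,'' and your write-up is the faithful unwinding of Definition \ref{def:confined}(ii). The two points you rightly flag as the only technical content --- lifting a joining of $\quotient{\bfX}{\B}$ with $\bfZ$ to one of $\bfX$ with $\bfZ$ via the relatively independent product over $\quotient{\bfX}{\B}$, and pushing independence of $\B$ and $\bfZ$ back and forth through $p_\B \times \id$ --- are both handled correctly.
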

\begin{proof}
This follows from the definitions.
\end{proof}

\begin{prop} \label{prop:independent_extension}
Let $\bfX$ be a dynamical system, $\A \arr \B$ be a confined extension on $\bfX$ and let $\C$ be a factor $\s$-algebra independent of $\A$. Then the extension $\A \vee \C \arr \B \vee \C$ is confined.
\end{prop}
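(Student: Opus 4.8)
The plan is to work with characterization (ii) of confinement from Definition \ref{def:confined}, which is available thanks to Proposition \ref{prop:alt_def_confined}. I fix the ambient system on which $\A$, $\B$ and $\C$ all live, with $\B \subset \A$ and $\C \indep \A$; in particular $\C \indep \B$. To prove that $\A \vee \C \arr \B \vee \C$ is confined, I take an arbitrary system $\bfZ$ and a joining of the system generated by $\A \vee \C$ with $\bfZ$ in which $\B \vee \C$ and $\bfZ$ are independent, and I aim to conclude that $\A \vee \C$ and $\bfZ$ are independent.

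First I would record the independences extracted from the hypothesis $\B \vee \C \indep \bfZ$: since $\C \subset \B \vee \C$ we get $\C \indep \bfZ$, and since $\B \subset \B \vee \C$ we get $\B \indep \bfZ$. The first key step is to upgrade these to the joint independence $\B \indep (\C \vee \bfZ)$. This is where I combine $\B \vee \C \indep \bfZ$ with $\B \indep \C$: for bounded $f$, $g$, $h$ measurable with respect to $\B$, $\C$, $\bfZ$ respectively, the block independence $\B \vee \C \indep \bfZ$ gives $\bbE[fgh] = \bbE[fg]\,\bbE[h]$, then $\B \indep \C$ gives $\bbE[fg] = \bbE[f]\,\bbE[g]$, and $\C \indep \bfZ$ gives $\bbE[gh] = \bbE[g]\,\bbE[h]$; together these yield $\bbE[fgh] = \bbE[f]\,\bbE[gh]$, and a monotone class argument promotes this to $\B \indep (\C \vee \bfZ)$.

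The second step applies confinement of $\A \arr \B$ in form (ii), taking $\C \vee \bfZ$ as the external system joined to the $\A$-system: since $\C \vee \bfZ$ is a factor $\s$-algebra independent of $\B$, confinement yields $\A \indep (\C \vee \bfZ)$. The final step combines $\A \indep (\C \vee \bfZ)$ with $\C \indep \bfZ$ and $\A \indep \C$: for bounded $f$, $g$, $h$ measurable with respect to $\A$, $\C$, $\bfZ$, independence of $\A$ from $\C \vee \bfZ$ gives $\bbE[fgh] = \bbE[f]\,\bbE[gh]$, then $\C \indep \bfZ$ gives $\bbE[gh] = \bbE[g]\,\bbE[h]$, while $\A \indep \C$ gives $\bbE[fg] = \bbE[f]\,\bbE[g]$; hence $\bbE[fgh] = \bbE[fg]\,\bbE[h]$, and another monotone class argument gives $(\A \vee \C) \indep \bfZ$, as desired.

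The main obstacle, and really the only delicate point, is the legitimacy of treating $\C \vee \bfZ$ as a single external system in the confinement hypothesis (ii): one must check that $\C \vee \bfZ$ is a genuine $T$-invariant factor joined to the $\A$-system (it is, being a join of invariant factors) and that its \emph{joint} independence from $\B$, rather than merely the pairwise independences $\B \indep \C$ and $\B \indep \bfZ$, is what feeds the hypothesis. This is precisely why the first step cannot be skipped. The remaining verifications are the routine density arguments passing from products of bounded measurable functions to the generated $\s$-algebras.
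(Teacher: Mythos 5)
Your proof is correct and follows essentially the same route as the paper's: deduce mutual independence of $\B$, $\C$, $\bfZ$ from the hypothesis together with $\C \indep \A$, apply confinement of $\A \arr \B$ to the factor $\C \vee \bfZ$, and then recombine to get $(\A \vee \C) \indep \bfZ$. The extra care you take in justifying the block-independence upgrades and in checking that $\C \vee \bfZ$ is a legitimate invariant factor to feed into Definition \ref{def:confined}~(ii) is exactly the content the paper leaves implicit.
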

\begin{proof}
Let $\bfZ$ be a dynamical system and take a joining with $\bfX$ in which $\B \vee \C$ is independent of $\bfZ$. Therefore, $\B$, $\C$ and $\bfZ$ are mutually independent, so $\B$ is independent of $\C \vee \bfZ$. Using the definition of confined extensions, this implies that $\A$ is independent of $\C \vee \bfZ$. This means that $\A \vee \C$ is independent of $\bfZ$, which shows that the extension $\A \vee \C \arr \B \vee \C$ is confined.
\end{proof}

\begin{rmq} \label{rmq:attention}
Let us however note that some manipulations on confined extensions are not always true: 
\begin{enumerate} [label= (\roman*)]
\item For two confined extensions $\A \arr \B$ and $\tilde\A \arr \B$ over a same factor, we cannot conclude that the joint extension $\A \vee \tilde\A \arr \B$ is confined. Therefore, in this case, there is not a \guillemotleft{} largest confined extension of $\B$ \guillemotright, since such an extension would contain $\A \vee \tilde\A$. A counter-example, which relies on compact extensions, can be found in Example \ref{exe:sup_non_confinée}.
\item Our first remark implies that the independence condition in Proposition \ref{prop:independent_extension} cannot be removed. Indeed, take $\A$, $\tilde\A$ and $\B$ such that $\A \arr \B$ and $\tilde\A \arr \B$ are confined but $\A \vee \tilde \A \arr \B$ is not. We have
$$\A \vee \tilde\A \arr \B \vee \tilde \A = \tilde \A \arr \B.$$
Therefore, using Proposition \ref{prop:composition_confined_extensions}, $\A \vee \tilde\A \arr \B \vee \tilde \A$ is not confined, even though $\A \arr \B$ is.
\end{enumerate}
\end{rmq}

\subsection{Confined extensions do not admit a super-innovation} \label{sect:confined_no_super-innov}

We mentioned in the previous section that our initial interest in confined extensions stemmed from the fact that they do not admit super-innovations. Let us prove this here. We need the following (and basic) result:
\begin{lem} \label{lem:independance_mesurabilite}
Let $(X, \mu)$ be a probability space. Let $\A$, $\B$ and $\C$ be $\s$-algebras such that $\B \subset \A$, $\A \subset \B \vee \C$ mod $\mu$ and $\C$ is independent of $\A$. Then $\A = \B$ mod $\mu$.
\end{lem}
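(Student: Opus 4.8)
The plan is to prove the only non-trivial inclusion. Since $\B \subset \A$ holds by hypothesis, it suffices to show $\A \subset \B$ mod $\mu$, which is equivalent to proving that every bounded $\A$-measurable function $f$ satisfies $f = \bbE[f \mid \B]$ almost surely. Indeed, applying this to $f = \mathbf{1}_A$ for an arbitrary $A \in \A$ shows that $\mathbf{1}_A$ is $\B$-measurable mod $\mu$, so that $A$ agrees with a $\B$-set up to a $\mu$-null set.

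The key observation is that the full independence $\C \indep \A$, together with the inclusion $\B \subset \A$, upgrades to the relative independence $\A \indep_\B \C$. To see this, fix bounded $a$ and $c$ that are respectively $\A$- and $\C$-measurable. Since $c$ is independent of $\A \supset \B$, one first gets $\bbE[c \mid \B] = \bbE[c]$; then, testing against an arbitrary bounded $\B$-measurable function $b$ and using that $ab$ is $\A$-measurable while $c$ is $\C$-measurable, the independence $\C \indep \A$ yields $\bbE[a b c] = \bbE[c]\,\bbE[ab] = \bbE[\bbE[c]\,\bbE[a \mid \B]\, b]$, which is exactly the defining identity $\bbE[ac \mid \B] = \bbE[a \mid \B]\,\bbE[c \mid \B]$ for $\A \indep_\B \C$.

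From $\A \indep_\B \C$ I then invoke the standard characterization of conditional independence: for every bounded $\A$-measurable $f$ one has $\bbE[f \mid \B \vee \C] = \bbE[f \mid \B]$. Since $\A \subset \B \vee \C$ mod $\mu$, any such $f$ is $(\B \vee \C)$-measurable, hence $f = \bbE[f \mid \B \vee \C] = \bbE[f \mid \B]$, which is $\B$-measurable; this is precisely what we wanted.

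The argument is short and elementary, so there is no serious obstacle; the one place to be careful is the passage $\C \indep \A \Rightarrow \A \indep_\B \C$, where one must genuinely use that $\B$ sits inside $\A$ (this is what forces $\bbE[c \mid \B]$ to be constant). Alternatively, one can avoid the characterization of conditional independence and argue directly in $L^2$: setting $\psi := f - \bbE[f \mid \B]$, approximate $\psi$ in $L^2(\B \vee \C)$ by finite sums $\sum_i b_i c_i$ of products of bounded $\B$- and $\C$-measurable functions, and observe that each $\bbE[\psi\, b_i c_i] = \bbE[\psi b_i]\,\bbE[c_i] = 0$, because $\psi b_i$ is $\A$-measurable with $\C \indep \A$ and $\bbE[\psi \mid \B] = 0$; this forces $\bbE[\psi^2] = 0$. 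Here the only technical point is the density of such product sums in $L^2(\B \vee \C)$, which follows from the fact that $\{B \cap C : B \in \B,\ C \in \C\}$ is a generating $\pi$-system.
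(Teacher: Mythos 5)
Your proof is correct and follows essentially the same route as the paper: both rest on the single computation $\bbE[ABC]=\bbE[AB]\,\bbE[C]=\bbE\bigl[\bbE[A\mid\B]\,BC\bigr]$ (valid because $AB$ and $\bbE[A\mid\B]B$ are $\A$-measurable, hence independent of $C$), which identifies $\bbE[A\mid\B\vee\C]$ with $\bbE[A\mid\B]$ and finishes via $\A\subset\B\vee\C$. Your phrasing through the relative independence $\A\indep_\B\C$, and your explicit $\pi$-system/density remark, merely make precise the monotone-class step the paper leaves implicit.
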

\begin{proof}
Let $A$ be a bounded real-valued $\A$-measurable random variable. Since $\A \subset \B \vee \C$, we have 
$$A = \bbE[A \, | \, \B \vee \C].$$
To identify the right-hand term, take a $\B$-measurable bounded random variable $B$ and a $\C$-measurable bounded random variable $C$. We know that $A B$ and $C$ are independent, so
\begin{align*}
\bbE[ A B C ] & = \bbE[A B] \; \bbE[C]\\
&= \bbE\big[ \bbE[A \, | \, \B] \, B \big] \;  \bbE[C]\\
&= \bbE\big[ \bbE[A \, | \, \B] \, B C\big].
\end{align*}
So 
$$A = \bbE[A \, | \, \B \vee \C] = \bbE[A \, | \, \B],$$
which means that $A$ is $\B$-measurable. Therefore, $\A \subset \B$ mod $\mu$.
\end{proof}
 
We can now show that super-innovations and confined extensions are incompatible:
\begin{prop} \label{prop:extension_not_confined}
%
Assume that $\B$ is a proper factor $\s$-algebra of $\bfX := (X, \A, \mu, T)$ (i.e. we assume that $\B \neq \A$) and that the extension $\A \arr \B$ admits a super-innovation. Then $\A \arr \B$ is not confined.
\end{prop}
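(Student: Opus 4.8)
The plan is to derive a contradiction from the assumption that $\A \arr \B$ is both confined and admits a super-innovation, concluding that we must have $\B = \A \bmod \mu$, contrary to the properness hypothesis. The strategy is to exploit the defining joining property of confinement against the product-type structure furnished by the super-innovation.

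\medskip

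First I would unpack the hypothesis that $\A \arr \B$ admits a super-innovation: there is a system $\hat\bfX := (\hat X, \hat\A, \hat\mu, \hat T)$ extending $\bfX$ together with a factor $\s$-algebra $\C$ on $\hat\bfX$ with $\C \indep \B$ and $\A \subset \hat\A = \B \vee \C \bmod \hat\mu$. The key observation is that confinement is a property invariant under the embedding $\bfX \hookrightarrow \hat\bfX$ only through its joinings, so I would set up a joining of $\bfX$ with the system $\bfZ$ generated by $\C$ (viewed as a factor system of $\hat\bfX$) in which $\B$ and $\bfZ$ are independent. Indeed, $\hat\bfX$ itself provides such a joining: the pair $(\bfX, \bfZ)$ sits inside $\hat\bfX$, the copies of $\B$ and of $\C$ are independent by the super-innovation property, and $\C$ is $\hat T$-invariant so it genuinely defines a factor system $\bfZ$ to which confinement applies via Definition \ref{def:confined}(ii).

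\medskip

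Now I would apply confinement. Since $\B$ is independent of $\C$ in this joining, property (ii) of Definition \ref{def:confined} forces $\A$ to be independent of $\C$ as well. But we also have $\A \subset \B \vee \C \bmod \hat\mu$ and $\B \subset \A$. These are precisely the three hypotheses of Lemma \ref{lem:independance_mesurabilite} (with the ambient space $(\hat X, \hat\mu)$, and the roles of the three $\s$-algebras being $\A$, $\B$, and $\C$): $\B \subset \A$, $\A \subset \B \vee \C \bmod \hat\mu$, and $\C \indep \A$. The lemma then yields $\A = \B \bmod \hat\mu$, hence $\A = \B \bmod \mu$ on $\bfX$, contradicting the assumption that $\B$ is a proper factor.

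\medskip

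The main obstacle I anticipate is purely a matter of careful bookkeeping rather than a deep difficulty: one must ensure that the confinement property, which is stated for joinings over the original system $\bfX$, is correctly transported to the enlarged system $\hat\bfX$, and that the factor $\s$-algebra $\C$ really does yield a legitimate factor \emph{system} $\bfZ$ (i.e. that it is $\hat T$-invariant, which holds because a super-innovation is by definition a \emph{dynamical} super-innovation). The only subtlety is confirming that confinement is insensitive to the embedding — equivalently, that any joining witnessing the failure of the conclusion on $\hat\bfX$ restricts to one on $\bfX$ of the type controlled by Definition \ref{def:confined}(ii) — after which the application of Lemma \ref{lem:independance_mesurabilite} is immediate.
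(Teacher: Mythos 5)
Your proof is correct and follows essentially the same route as the paper's: both extract from the super-innovation a joining of $\bfX$ with the factor system generated by $\C$ in which $\B$ and $\C$ are independent, apply Definition \ref{def:confined}(ii) to upgrade this to independence of $\A$ and $\C$, and then invoke Lemma \ref{lem:independance_mesurabilite} to conclude $\A = \B \bmod \mu$, contradicting properness. The bookkeeping point you raise about transporting confinement to the enlarged system is handled implicitly in the paper by phrasing the super-innovation directly as a joining $\bfX \times_\l \bfZ$, which is exactly the reduction you describe.
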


\begin{proof}
Assume that the extension $\A \arr \B$ is confined. Since it admits a super-innovation, there exist a system $\bfZ$ and a joining $\bfX \times_\l \bfZ$ in which we have $\A \subset \B \vee \bfZ$ mod $\l$ and $\B$ and $\bfZ$ are independent. However, since $\A \arr \B$ is confined, this means that $\A$ and $\bfZ$ are independent. Now using Lemma \ref{lem:independance_mesurabilite}, we get that $\A = \B$ mod $\mu$, which is contrary to our assumption.

\end{proof}
Combining this with Proposition \ref{prop:composition_confined_extensions}, we get the following corollary, which is very useful when we want to show that an extension is not confined.
\begin{cor} \label{cor:extension_not_confined}
Let $\bfX:= (X, \A, \mu, T)$ be a dynamical system. If the extension $\A \arr \B$ is confined, then for any factor $\s$-algebra $\tilde \A$ such that $\B \subsetneq \tilde\A \subsetneq \A$ mod $\mu$, neither $\A \arr \tilde\A$ nor $\tilde\A \arr \B$ admit a super-innovation. In particular, there cannot be a factor in $\A$ independent of $\B$.
\end{cor}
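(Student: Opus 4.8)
The plan is to read the corollary off the two preceding propositions, so no new machinery is needed. First I would fix a factor $\s$-algebra $\tilde\A$ with $\B \subsetneq \tilde\A \subsetneq \A$ mod $\mu$ and apply Proposition \ref{prop:composition_confined_extensions} to the chain $\B \subset \tilde\A \subset \A$. Since $\A \arr \B$ is assumed confined, that proposition immediately yields that both intermediate extensions $\A \arr \tilde\A$ and $\tilde\A \arr \B$ are confined.

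Next I would invoke Proposition \ref{prop:extension_not_confined} in its contrapositive form: a confined extension whose factor is a proper subalgebra of the ambient one cannot admit a super-innovation. Applied to $\tilde\A \arr \B$, where $\B \neq \tilde\A$ because the inclusion $\B \subsetneq \tilde\A$ is strict, this shows $\tilde\A \arr \B$ has no super-innovation; applied to $\A \arr \tilde\A$, where $\tilde\A \neq \A$ by $\tilde\A \subsetneq \A$, it shows $\A \arr \tilde\A$ has no super-innovation. These two applications are exactly the two assertions of the corollary.

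For the final \emph{in particular} claim, I would argue by contradiction, reading the statement as concerning a nontrivial factor (the trivial $\s$-algebra being independent of everything). Suppose $\C \subset \A$ is a factor $\s$-algebra that is independent of $\B$ and not trivial mod $\mu$, and set $\tilde\A := \B \vee \C$. Then $\C \not\subset \B$ mod $\mu$ forces $\B \subsetneq \tilde\A$, while $\tilde\A \subset \A$ by construction. Since $\B \indep \C$ and $\tilde\A = \B \vee \C$, the extension $\tilde\A \arr \B$ is of product type, hence trivially admits a super-innovation (take $\hat\bfX = \bfX$ and the innovation $\C$ itself). On the other hand, Proposition \ref{prop:composition_confined_extensions} gives that $\tilde\A \arr \B$ is confined, and since $\B \neq \tilde\A$, Proposition \ref{prop:extension_not_confined} forbids it from admitting a super-innovation. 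This contradiction rules out any such $\C$.

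There is no genuine obstacle here, as the corollary is a formal consequence of the two propositions; the only point requiring care, rather than effort, is the bookkeeping of which inclusions are strict, so that the properness hypothesis $\B \neq \tilde\A$ (respectively $\tilde\A \neq \A$) needed by Proposition \ref{prop:extension_not_confined} is verified in each application, together with the observation that a nontrivial factor independent of $\B$ is precisely a product-type substructure over $\B$ and therefore furnishes a super-innovation.
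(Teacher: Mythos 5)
Your proof is correct and is exactly the argument the paper intends: the corollary is stated there with only the remark that it follows by combining Proposition \ref{prop:composition_confined_extensions} with Proposition \ref{prop:extension_not_confined}, which is precisely what you carry out, including the natural treatment of the ``in particular'' clause via $\tilde\A := \B \vee \C$ being of product type. The only cosmetic point is that the super-innovation for $\tilde\A \arr \B$ should be read on the quotient system $\quotient{\bfX}{\tilde\A}$ rather than literally with $\hat\bfX = \bfX$, but this does not affect the argument.
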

\begin{exe} \label{exe:sup_non_confinée}
Let us use our corollary to illustrate Remark \ref{rmq:attention}. Take a system $\bfY$, a measurable map $\phi: \bfY \to G$ and consider the system $\bfY \ltimes_\phi G$ as defined in Definition \ref{def:compact_ext}. Because of Theorem \ref{thm:confinement_compact}, we can choose $\bfY \ltimes_\phi G$ so that the resulting compact extension is confined. As in the proof of Lemma \ref{lem:compact_standard}, take $G_1$ and $G_2$ two copies of $G$ and consider $\bfZ$, the system on $(Y \times G_1 \times G_2, \nu \otimes m_G \otimes m_G)$ given by the transformation
$$(y, g_1, g_2) \mapsto (Sy, g_1 \cdot \phi(y), g_2 \cdot \phi(y)). $$
In this case, $\bfZ \to \bfY$ is the supremum of the compact extensions $\bfY \ltimes_\phi G_1 \to \bfY$ and $\bfY \ltimes_\phi G_2 \to \bfY$. However, we have the invariant map 
$$\a : (y, g_1, g_2) \mapsto g_2 \cdot g_1^{-1}$$
which is independent from $\bfY$. Therefore, Corollary \ref{cor:extension_not_confined} tells us that $\bfZ \to \bfY$ is not confined. 

This gives an example of a supremum of two confined extensions which is not confined.
\end{exe}

\subsection{Lifting results}

In this section we list some properties of a dynamical system which are automatically lifted to any confined extension. Such results are not surprising. Indeed, we show in the next section that confined extensions resemble Robinson's \emph{stable extensions} which he developed specifically to get lifting results \cite{stable_extensions}.

\begin{prop} \label{prop:lifting}
Let be $\calP$ be a property of a system and let $\C$ be a family of dynamical systems. Assume that $\calP$ can be characterized as follows: a system $\bfX$ satisfies $\calP$ if and only if, for every $\bfZ \in \C$, $\bfX$ is disjoint from $\bfZ$. If $\bfY$ satisfies $\calP$ and the extension $\bfX \overset{\pi}{\to} \bfY$ is confined, then $\bfX$ satisfies $\calP$.
\end{prop}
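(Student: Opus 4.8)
The plan is to unwind the disjointness characterization of $\calP$ and reduce everything to a single application of property (ii) of confinement. Fix an arbitrary $\bfZ \in \C$; since $\calP$ is defined by disjointness from each member of $\C$, it suffices to show that $\bfX$ is disjoint from $\bfZ$, i.e. that every joining $\l$ of $\bfX$ and $\bfZ$ equals the product joining $\mu \otimes \rho$. So I would begin from an arbitrary joining $\l$ on $X \times Z$ and aim to prove that $\A$ is independent of $\bfZ$ under $\l$, which is exactly the assertion that $\l$ is the product joining.

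First I would push $\l$ down to the factor $\bfY$: applying $\pi \times \id_Z$ yields a joining of $\bfY$ and $\bfZ$. Since $\bfY$ satisfies $\calP$ and $\bfZ \in \C$, the hypothesis gives that $\bfY$ is disjoint from $\bfZ$, so this pushed-down joining is forced to be the product $\nu \otimes \rho$. Translated back to $\l$, this says precisely that the copy of $\B = \s(\pi)$ is independent of $\bfZ$. At this point the confinement of $\bfX \overset{\pi}{\to} \bfY$ is exactly what is needed: property (ii) of Definition \ref{def:confined} states that in any joining of $\bfX$ with a system in which $\B$ and that system are independent, $\A$ and that system are also independent. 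Applying it to $\l$ upgrades $\B \indep \bfZ$ to $\A \indep \bfZ$ under $\l$, i.e. $\l = \mu \otimes \rho$. As $\bfZ \in \C$ was arbitrary, $\bfX$ is disjoint from every member of $\C$, which is the defining condition for $\calP$.

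I do not expect a genuine obstacle here; the content is entirely a translation exercise between three equivalent formulations. The only points to verify carefully are that ``the pushed-down joining is the product'' is literally the independence hypothesis ``$\B$ and $\bfZ$ are independent under $\l$'' required to trigger confinement, and that the conclusion ``$\A$ and $\bfZ$ are independent under $\l$'' is literally ``$\l$ is the product joining.'' Both identifications are immediate from the definition of a joining and of independence of $\s$-algebras, so the proof reduces to chaining disjointness of $\bfY$, property (ii), and disjointness of $\bfX$ in that order.
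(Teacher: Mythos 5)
Your argument is correct and is exactly the one the paper intends: it pushes an arbitrary joining of $\bfX$ and $\bfZ$ down to $\bfY$, uses disjointness of $\bfY$ from $\bfZ$ to obtain independence of $\B$ and $\bfZ$, and then applies Definition \ref{def:confined} (ii) to conclude. The paper simply states that the proposition ``follows directly from Definition \ref{def:confined} (ii)''; your write-up fills in precisely those routine identifications.
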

\begin{proof}
It follows directly from Definition \ref{def:confined} (ii).
\end{proof}

Using this proposition, we can prove that many properties are preserved under confined extensions:
\begin{enumerate}
\item Ergodicity: $\bfX$ is ergodic if and only if it is disjoint from every identity system (see \cite[Theorem 6.26]{glasner}).
\item Weak mixing: $\bfX$ is weakly mixing if and only if it is disjoint from every system with discrete spectrum (see \cite[Theorem 6.27]{glasner}).
\item Mild mixing: $\bfX$ is mildly mixing if and only if it is disjoint from every rigid system (see \cite[Corollary 8.16]{glasner}).
\item $K$-property: $\bfX$ is a $K$-automorphism if and only if it is disjoint from every $0$-entropy system (see \cite[Theorem 18.16]{glasner}).
\end{enumerate}
\begin{rmq} \label{rmq:disjoint_ergodic}
Conversely, one can use confinement to characterize disjointness from a family of systems. For example, the systems disjoint from all ergodic systems are the confined extensions of identity map systems (see \cite{disjoint_from_erg}).
\end{rmq}

We can also prove that other properties are preserved under confined extensions. In the following proposition, we prove this to be true for mixing of all orders:
\begin{defi}
Let $n \geq 2$. A system $\bfX$ is $n$-mixing if for all measurable sets $A_1, ..., A_n \subset X$ we have
$$\lim_{k_1, ..., k_{n-1} \rightarrow \infty} \mu(A_1 \cap T^{-k_1} A_2 \cap \cdots \cap T^{-(k_1 + \cdots + k_{n-1})}A_n) = \mu(A_1) \cdots \mu(A_n).$$
\end{defi}
\begin{prop}
Let $n \geq 2$. If $\bfY$ is $n$-mixing and the extension $\bfX \overset{\pi}{\to} \bfY$ is confined, then $\bfX$ is $n$-mixing.
\end{prop}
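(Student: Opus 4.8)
The plan is to recast $n$-mixing as a statement about weak convergence of off-diagonal self-joinings and then invoke property (iii) of Definition \ref{def:confined}. For a tuple of gaps $(k_1, \dots, k_{n-1}) \in (\bbN^*)^{n-1}$, I introduce the map
\[
\Phi_{k_1, \dots, k_{n-1}} : x \longmapsto \big(x,\, T^{k_1}x,\, T^{k_1+k_2}x,\, \dots,\, T^{k_1 + \cdots + k_{n-1}}x\big)
\]
and set $\mu_{k_1, \dots, k_{n-1}} := (\Phi_{k_1, \dots, k_{n-1}})_* \mu$, a measure on $X^n$. Since $T$ commutes with its powers and preserves $\mu$, this measure is invariant under $T \times \cdots \times T$ and each of its marginals is $\mu$, so it is a genuine $n$-fold self-joining of $\bfX$. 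Testing against product functions $\mathbf 1_{A_1} \otimes \cdots \otimes \mathbf 1_{A_n}$ shows that $\mu(A_1 \cap T^{-k_1}A_2 \cap \cdots \cap T^{-(k_1+\cdots+k_{n-1})}A_n) = \int \mathbf 1_{A_1} \otimes \cdots \otimes \mathbf 1_{A_n}\, d\mu_{k_1, \dots, k_{n-1}}$, so that $\bfX$ is $n$-mixing if and only if $\mu_{k_1, \dots, k_{n-1}} \to \mu^{\otimes n}$ weakly as $k_1, \dots, k_{n-1} \to \infty$.

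First I would fix arbitrary sequences $k_1^{(j)}, \dots, k_{n-1}^{(j)} \to \infty$ and, using the weak-$*$ compactness of the set of self-joinings of $\bfX$ with fixed marginals, extract a subsequence along which $\mu_{k_1^{(j)}, \dots, k_{n-1}^{(j)}}$ converges weakly to some measure $\lambda$. Because invariance and the marginals are preserved under weak limits, $\lambda$ is again an $n$-fold self-joining of $\bfX$ on $(X^n, \A^{\otimes n}, \lambda, T \times \cdots \times T)$. It then suffices to prove that $\lambda = \mu^{\otimes n}$: since every weakly convergent subsequence would share this same limit, the whole family converges to $\mu^{\otimes n}$, which is exactly $n$-mixing of $\bfX$.

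The crux is to transfer $n$-mixing from $\bfY$ into an independence statement about the copies of $\B$ under $\lambda$. Projecting $\mu_{k_1^{(j)}, \dots, k_{n-1}^{(j)}}$ through $\pi \times \cdots \times \pi$ gives precisely the analogous off-diagonal self-joining $\nu_{k_1^{(j)}, \dots, k_{n-1}^{(j)}}$ of $\bfY$; as $\bfY$ is $n$-mixing and the gaps tend to infinity, these converge to $\nu^{\otimes n}$, and by continuity of the projection the image of $\lambda$ under $\pi \times \cdots \times \pi$ equals $\nu^{\otimes n}$. By definition of mutual independence of the generated $\s$-algebras, this says exactly that the $n$ copies of $\B$ are mutually independent under $\lambda$. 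Now property (iii) of Definition \ref{def:confined} applies: an $n$-fold self-joining of $\bfX$ in which the $n$ copies of $\B$ are mutually independent must be the product joining, whence $\lambda = \mu^{\otimes n}$.

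The measure-theoretic verifications — that $\mu_{k_1, \dots, k_{n-1}}$ is a self-joining, that weak limits remain invariant with the correct marginals, and that the projection of the off-diagonal joining of $\bfX$ is the off-diagonal joining of $\bfY$ — are all routine. The only real content, and the step I expect to carry the argument, is the observation that $n$-mixing is equivalent to weak convergence of these off-diagonal joinings to the product measure, so that $n$-mixing of the factor $\bfY$ forces independence of the copies of $\B$ in any limiting joining; once this is in place, confinement does all the remaining work through property (iii).
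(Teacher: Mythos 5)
Your proposal is correct and follows essentially the same route as the paper: both recast $n$-mixing as weak convergence of the off-diagonal joinings $\mu_{k}$ to $\mu^{\otimes n}$, use compactness of the space of $n$-fold self-joinings to extract a limit $\lambda$, push forward through $\pi \times \cdots \times \pi$ and use $n$-mixing of $\bfY$ to see that the copies of $\B$ are mutually independent under $\lambda$, and then apply property (iii) of Definition \ref{def:confined} to conclude $\lambda = \mu^{\otimes n}$. The only cosmetic difference is that you spell out the standard subsequence argument for convergence of the full family, which the paper leaves implicit.
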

\begin{proof}
Let $J_n(\mu) \subset \P(X^{n})$ be the set of $n$-fold joinings of $\mu$. We endow it with the topology given by: $\l_p \underset{p \arr \infty}{\to} \l$ if
$$\forall A_1, ..., A_n \in \A, \l_p(A_1 \times \cdots \times A_n) \underset{p \arr \infty}{\to} \l(A_1 \times \cdots \times A_n).$$
With this topology, $J_n(\mu)$ is a compact metrizable space (see \cite{joinings_in_ergodic_theory}). 

For $k:= (k_1, ..., k_{n-1}) \in \bbN^{n-1}$, let us define the measure on $X^n$:
$$\mu_k := \int \delta_x \otimes \delta_{T^{k_1}x} \otimes \cdots \otimes \delta_{T^{k_1 + \cdots k_{n-1}}x} d\mu(x),$$ 
and similarly define $\nu_k$ on $Y^n$. We can re-write the definition of $n$-mixing as 
$$\mu_k \underset{k \rightarrow \infty}{\to} \mu^{\otimes n}.$$
We also define $\pi_n := \pi \times \cdots \times \pi$ and $T_n := T \times \cdots \times T$. 

Using the compactness of $J_n(\mu)$, fix a sequence $\left(k(i)\right)_{i \in \bbN}$ on $\bbN^{n-1}$ such that, for every $\ell \in \{1, ..., n-1\}$ 
$$\lim_{i \rightarrow \infty} k_\ell(i) = + \infty,$$
and 
$$\mu_{k(i)} \underset{i \rightarrow \infty}{\to} \l,$$
for some measure $\l$ on $X^n$. Clearly, $\l$ is a $T_n$-invariant measure which projects to $\mu$ on each coordinate, and that means that it defines a $n$-fold joining of $\bfX$. Moreover, we have $(\pi_n)_*\mu_k = \nu_k$, which yields
$$(\pi_n)_*\l = \lim_{i \rightarrow \infty} \nu_{k(i)} = \nu^{\otimes n},$$
where we use the $n$-mixing property of $\bfY$ to get the last equality. Finally, since the extension $\bfX \overset{\pi}{\to} \bfY$ is confined, using property (iii) from Definition \ref{def:confined}, we must have $\l = \mu^{\otimes n}$.

We have proved that $\mu_k \underset{k \rightarrow \infty}{\to} \mu^{\otimes n}$, which means that $\bfX$ is $n$-mixing.
\end{proof}

Using again property (iii) from Definition \ref{def:confined}, we can easily prove that Del Junco and Rudolph's PID property is preserved under confined extensions. We first recall the definition:
\begin{defi} [Del Junco and Rudolph \cite{deljunco_rudolph}] \label{defi:PID}
Let $\bfX$ be a dynamical system and $n \in \bbN \cup \{+\infty\}$. We say that $\bfX$ has the $n$-fold PID (pairwise independently determined) property if the only $n$-fold self-joining of $\bfX$ in which the copies of $\bfX$ are pairwise independent is the product joining.
\end{defi}
We then have:
\begin{prop}
If $\bfY$ has the $n$-fold PID property and the extension $\pi: \bfX \to \bfY$ is confined, then $\bfX$ has the $n$-fold PID property.
\end{prop}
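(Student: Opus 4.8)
The plan is to deduce the $n$-fold PID property of $\bfX$ from that of $\bfY$ in two upgrades: first use the $n$-fold PID property of $\bfY$ to promote \emph{pairwise} independence of the copies of $\B$ into \emph{mutual} independence, and then invoke characterization (iii) of confinement in Definition \ref{def:confined} to promote mutual independence of the copies of $\B$ into mutual independence of the copies of $\A$. I would start from an arbitrary $n$-fold self-joining $\l$ of $\bfX$, i.e.\ a $(T \times \cdots \times T)$-invariant measure on $X^n$ all of whose marginals equal $\mu$, in which the copies $\A_1, \dots, \A_n$ of $\A$ are pairwise independent; the goal is to show that $\l = \mu^{\otimes n}$. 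Since each $\B_k$ is a sub-$\s$-algebra of $\A_k$, the assumed pairwise independence of the $\A_k$ immediately yields pairwise independence of the copies $\B_1, \dots, \B_n$ of $\B$.

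Next I would push $\l$ forward under $\pi_n := \pi \times \cdots \times \pi$ onto $Y^n$. The image $(\pi_n)_* \l$ is invariant under the product transformation on $Y^n$ and has all marginals equal to the measure on $\bfY$, so it is an $n$-fold self-joining of $\bfY$; moreover the copies of $\bfY$ in this joining correspond exactly to the copies $\B_1, \dots, \B_n$ of $\B$ in $\l$, so they are pairwise independent. Applying the $n$-fold PID property of $\bfY$, the joining $(\pi_n)_* \l$ must be the product joining, which is precisely the statement that $\B_1, \dots, \B_n$ are \emph{mutually} independent in $\l$.

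At this point $\l$ is an $n$-fold self-joining of $\bfX$ in which the $n$ copies of $\B$ are mutually independent, so characterization (iii) in Definition \ref{def:confined}, applied with this value of $n$, forces $\l = \mu^{\otimes n}$. As $\l$ was an arbitrary pairwise-independent $n$-fold self-joining, this establishes the $n$-fold PID property of $\bfX$. The argument is uniform in $n \in \bbN^* \cup \{+\infty\}$, interpreting mutual independence of an infinite family through its finite sub-families as usual (exactly as in the proof of (ii) $\Rightarrow$ (iii)). I do not expect a genuine obstacle here: the only points requiring care are the routine bookkeeping that pairwise independence descends from the $\A_k$ to the sub-algebras $\B_k$ and survives the projection $\pi_n$, and the observation that both the PID hypothesis on $\bfY$ and confinement property (iii) are available at the \emph{same} index $n$, so that no mismatch arises between the two upgrading steps.
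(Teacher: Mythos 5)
Your proof is correct and is exactly the argument the paper intends: the paper's own proof is the one-line remark ``It follows from property (iii) in Definition \ref{def:confined},'' and your write-up simply fills in the implicit steps (pairwise independence descends to the $\B_k$, the PID property of $\bfY$ upgrades it to mutual independence, and confinement (iii) then forces the product joining). No gaps.
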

\begin{proof}
It follows from property (iii) in Definition \ref{def:confined}.
\end{proof}

We can also see that confined extensions preserve the Kolmogorov-Sinaï entropy:
\begin{prop}
If $\bfX \overset{\pi}{\to} \bfY$ is a confined extension, then $h(\bfX) = h(\bfY)$. Moreover, if $\bfX$ is a $K$-automorphism of finite entropy, the converse is true: $\bfX \overset{\pi}{\to} \bfY$ is confined if and only if $h(\bfX) = h(\bfY)$.
\end{prop}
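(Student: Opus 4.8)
The inequality $h(\bfX) \geq h(\bfY)$ is automatic since $\bfY$ is a factor of $\bfX$, so the first assertion amounts to ruling out a positive relative entropy $h(\A \,|\, \B) > 0$. The plan is to argue by contraposition: if the relative entropy were positive, the relative version of Sinai's theorem (see \cite{thouvenot_ornstein_rel}) would produce a non-trivial Bernoulli factor $\bfB \subseteq \A$ that is \emph{independent} of $\B$. But Corollary \ref{cor:extension_not_confined} asserts that a confined extension admits no non-trivial factor independent of $\B$, a contradiction. Hence $h(\A \,|\, \B) = 0$, i.e. $h(\bfX) = h(\bfY)$. One first reduces to the ergodic case (working in each ergodic component) so that the relative Sinai theorem applies.

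For the converse, assume $\bfX$ is a $K$-automorphism of finite entropy with $h(\bfX) = h(\bfY)$; here finiteness guarantees that $h(\bfX) = h(\bfY)$ is equivalent to the extension $\A \arr \B$ being \emph{relatively zero entropy}, $h(\A \,|\, \B) = 0$. I would establish confinement through property (i) of Definition \ref{def:confined}: let $\lambda$ be a $2$-fold self-joining of $\bfX$ with $\B_1 \indep \B_2$, and aim to show $\A_1 \indep \A_2$. Two standard facts set the stage: a factor of a $K$-automorphism is again a $K$-automorphism (so $\bfY$, and each copy $\A_i \cong \bfX$, is $K$), and each extension $\A_i \arr \B_i$ is relatively zero entropy.

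The heart of the argument is the relative disjointness principle: a relatively zero entropy extension of a system $\bfU$ is relatively disjoint, over $\bfU$, from any relatively $K$ (relatively completely positive entropy) extension of $\bfU$ — the relative analogue of the disjointness of $K$-automorphisms from zero-entropy systems (see \cite{glasner}). I would apply it twice. First, over $\bfU = \B_1$: since $\B_2$ is $K$ and $\B_1 \indep \B_2$, the extension $\B_1 \vee \B_2 \arr \B_1$ is relatively $K$, while $\A_1 \arr \B_1$ is relatively zero entropy, so relative disjointness gives $\A_1 \indep_{\B_1} \B_2$; combined with $\B_1 \indep \B_2$ this upgrades to $\A_1 \indep \B_2$. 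Second, over $\bfU = \B_2$: as $\A_1 \indep \B_2$ and $\A_1$ is $K$, the extension $\A_1 \vee \B_2 \arr \B_2$ is relatively $K$, while $\A_2 \arr \B_2$ is relatively zero entropy, so relative disjointness yields $\A_2 \indep_{\B_2} \A_1$, and together with $\A_1 \indep \B_2$ this gives the desired $\A_1 \indep \A_2$. By Lemma \ref{lem:rel_prod} this says exactly that $\lambda$ is the product joining, so the extension is confined.

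The main obstacle I anticipate is the correct deployment of the relative disjointness theorem: one must check that the two $\s$-algebras compared in each step are genuinely extensions of the chosen common base $\bfU$ (this is why I pass through $\B_1 \vee \B_2$ rather than $\B_2$ alone in the first step), and that in each pair exactly one extension is relatively $K$ and the other relatively zero entropy. The bookkeeping that turns relative independence over $\B_i$ into absolute independence — each time invoking one of the independences already proved, and in the right order — is routine but must be carried out carefully. A secondary technical point is the reduction to the ergodic case in the forward implication, needed so that the relative Sinai theorem is available.
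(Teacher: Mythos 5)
Your proposal is correct and takes essentially the same route as the paper: the forward direction argues by contraposition via Thouvenot's relative Sinai theorem to produce a Bernoulli factor independent of $\B$, contradicting Corollary \ref{cor:extension_not_confined}, exactly as in the text. For the converse, the paper likewise applies a relative disjointness lemma for $K$-systems versus zero-relative-entropy extensions twice in the same order — first to get $\B_2 \indep \A_1$, then to get $\A_1 \indep \A_2$ — so your two-step deployment over $\B_1$ and then $\B_2$ matches the paper's argument.
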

The second part of the proposition was pointed out to us by Christophe Leuridan.
\begin{proof}
Assume that $h(\bfX) > h(\bfY)$. Using Thouvenot's relative version of Sinaï's theorem, we know there exists a Bernoulli factor of $\bfX$ with entropy $h(\bfX) - h(\bfY)$ which is independent of $\bfY$ (we can get that result from \cite[Proposition 2]{thouvenot_ornstein_rel}). But we have seen in Corollary \ref{cor:extension_not_confined} that a confined extension of $\bfY$ can have no non-trivial factors independent of $\bfY$, therefore $\bfX \overset{\pi}{\to} \bfY$ is not confined. This proves the first part of the proposition.

To prove the second part, we will use \cite[Lemma 2]{K_confined}. It gives a relative version of the disjointness of $K$-automorphisms and 0-entropy systems: On a dynamical system $(Z, \C, \rho, R)$, take two $R$-invariant $\s$-algebras $\A$ and $\B$ such that $\B \subset \A$ and $h(\A, R) = h(\B, R) < \infty$. Next, take a third $R$-invariant $\s$-algebra $\D$ with finite entropy such that $(\D, R)$ has the $K$-property and $\D$ is independent of $\B$. Then, $\D$ is independent of $\A$.

Assume that $\bfX$ is a $K$-automorphism of finite entropy and that $h(\bfX) = h(\bfY)$. Let $\l$ yield a self-joining $\bfX_1 \times_\l \bfX_2$ in which the copies of $\bfY$ are independent. First, $\bfY_2$ has the $K$-property and is independent of $\bfY_1$. Moreover, $h(\bfX_1) = h(\bfY_1)$, so \cite[Lemma 2]{K_confined} tells us that $\bfY_2$ is independent of $\bfX_1$. Similarly, $\bfX_1$ has the $K$-property and is independent of $\bfY_2$. Since $h(\bfX_2) = h(\bfY_2)$, \cite[Lemma 2]{K_confined} tells us that $\bfX_1$is independent of $\bfX_2$.
\end{proof}

However, not all properties are preserved under confined extensions:
\begin{enumerate}
\item Rigidity: the system studied in Proposition \ref{prop:non-WM_compact_ext} and Proposition \ref{prop:non_rigid_confined_extension} gives an example of a non-rigid confined extension of a rigid factor. We recall the definition of rigidity in Definition \ref{def:rigid}.
\item The Bernoulli and loosely Bernoulli properties: we show in Theorem \ref{thm:TT-1_confined} that a $T, T^{-1}$ transformation is a confined extension of its natural Bernoulli factor (provided $T^2$ is ergodic), but, when $T$ is given by a Bernoulli shift, Kalikow proved in \cite{kalikow_TT-1} that the $T, T^{-1}$ transformation is not loosely Bernoulli (and therefore not Bernoulli either).
\end{enumerate}

\subsection{Confinement, stability and GW-property}

In studying confined extensions, we found in the literature similar notions : stability and GW-property. The purpose of this section is to compare confinement to those properties.

A key notion of this section will be the following. It already appeared implicitly in Lemma \ref{lem:rel_unique_ergo}, but we choose to formalize it here.
\begin{defi} \label{def:RUE}
Let $\bfX := (X, \A, \mu, T)$ and $\bfY := (Y, \B, \nu, S)$. We say that the extension given by a factor map $\pi: \bfX \to \bfY$ is \emph{relatively uniquely ergodic (RUE)} over $\nu$ if, for any $T$-invariant probability measure $\l$ on $X$ such that $\pi_*\l = \nu$, we have $\l = \mu$.
\end{defi}

\subsubsection{Stable extensions} \label{sect:stable_extensions}

In \cite{stable_extensions}, Robinson gives three notions of << stable extensions >>, which we recall here.
\begin{defi} \label{defi:stable}
Let $\bfX := (X, \A, \mu, T)$ and $\bfY := (Y, \B, \nu, S)$ be dynamical systems. Consider an extension given by a factor map $\pi: \bfX \to \bfY$. We say that $\bfX \overset{\pi}{\to} \bfY$ is:
\begin{itemize}
\item \emph{stable} if $\bfX$ is ergodic and for every system $\bfZ := (Z, \C, \rho, R)$ such that $\bfX \otimes \bfZ$ is ergodic, the extension $\bfX \otimes \bfZ \overset{\pi \times \mathrm{Id}}{\to} \bfY \otimes \bfZ$ is relatively uniquely ergodic over $\nu \otimes \rho$. 
\item \emph{$n$-fold self-stable} if $\bfY$ is weakly mixing and the extension $\bfX^{\otimes n} \overset{\pi_n}{\to} \bfY^{\otimes n}$ is relatively uniquely ergodic over $\nu^{\otimes n}$. It is \emph{self-stable} if it is $n$-fold self-stable, for every $n$. 
\item \emph{weakly stable} if $\bfX$ is ergodic and for every system $\bfZ$ and every \emph{ergodic} joining of $\bfX$ and $\bfZ$ for which $\bfY$ and $\bfZ$ are independent, $\bfX$ and $\bfZ$ are also independent. 
\end{itemize}
\end{defi}
\begin{rmqs}
The main difference between weak stability and the first two definitions is that weak stability is an isomorphism invariant (see \cite[Proposition 3.12]{stable_extensions}), while stability and self-stability depend on the model we consider. 

Then, the distinction between stability and confinement lies mainly in the ergodicity and weak mixing assumptions in the definitions of stability. We discuss this more precisely in the next proposition.
\end{rmqs}

\begin{prop} \label{lem:stable-confined}
Consider an extension given by a factor map $\pi: \bfX \to \bfY$ and let $n \geq 1$. We have the following relations:
\begin{enumerate} [label = (\roman*)]
\item $\bfX \overset{\pi}{\to} \bfY$ is $n$-fold self-stable if and only if $\bfY$ is weakly mixing, and $\bfX \overset{\pi}{\to} \bfY$ is RUE and confined,
\item if $\bfY$ is ergodic and $\bfX \overset{\pi}{\to} \bfY$ is confined, then $\bfX \overset{\pi}{\to} \bfY$ is weakly stable.
\suspend{enumerate}
Since we have \cite[Proposition 3.13.]{stable_extensions}, we also get
\resume{enumerate}[{[label = (\roman*)]}]
\item If $\bfY$ is ergodic and $\bfX \overset{\pi}{\to} \bfY$ is RUE and confined, then $\bfX \overset{\pi}{\to} \bfY$ is stable.
\end{enumerate}
\end{prop}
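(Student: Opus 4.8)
The plan is to prove the three items by translating Robinson's measure-theoretic conditions into the joining language of Definition~\ref{def:confined}, and I would dispatch item (ii) first since it is essentially immediate. Because $\bfY$ is ergodic and $\bfX \overset{\pi}{\to} \bfY$ is confined, Proposition~\ref{prop:lifting} applied to the disjointness characterization of ergodicity (disjointness from all identity systems) shows that $\bfX$ is ergodic. For the remaining requirement of weak stability I would simply observe that it is literally weaker than property (ii) of Definition~\ref{def:confined}: given any system $\bfZ$ and any joining of $\bfX$ and $\bfZ$ in which $\bfY$ and $\bfZ$ are independent, confinement already forces $\bfX$ and $\bfZ$ to be independent, with no ergodicity hypothesis on the joining needed. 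Hence (ii) holds a fortiori.

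For item (i) I would first isolate the dictionary that drives both implications: once its marginals are known to equal $\mu$, a $(T\times\cdots\times T)$-invariant probability $\l$ on $X^n$ with $(\pi_n)_*\l = \nu^{\otimes n}$ is exactly an $n$-fold self-joining of $\bfX$ whose $n$ copies of $\B := \s(\pi)$ are mutually independent. Assume the right-hand side ($\bfY$ weakly mixing, and $\bfX \to \bfY$ RUE and confined). Weak mixing of $\bfY$ is part of the definition of $n$-fold self-stability, so it remains to prove that $\bfX^{\otimes n} \overset{\pi_n}{\to} \bfY^{\otimes n}$ is RUE over $\nu^{\otimes n}$. Let $\l$ be a $(T\times\cdots\times T)$-invariant probability on $X^n$ with $(\pi_n)_*\l = \nu^{\otimes n}$. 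Its $k$-th marginal $\l_k$ is $T$-invariant and satisfies $\pi_*\l_k = \nu$, so RUE of $\bfX \to \bfY$ gives $\l_k = \mu$; thus $\l$ is an $n$-fold self-joining. The condition $(\pi_n)_*\l = \nu^{\otimes n}$ then says precisely that the $n$ copies of $\B$ are mutually independent, and confinement, in the form of property (iii) of Definition~\ref{def:confined}, yields $\l = \mu^{\otimes n}$.

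Conversely, assume $n$-fold self-stability; then $\bfY$ is weakly mixing by definition. For RUE of $\bfX \to \bfY$, I would take a $T$-invariant $\l_1$ on $X$ with $\pi_*\l_1 = \nu$; then $\l_1^{\otimes n}$ is $(T\times\cdots\times T)$-invariant and pushes forward under $\pi_n$ to $\nu^{\otimes n}$, so the assumed RUE over $\nu^{\otimes n}$ forces $\l_1^{\otimes n} = \mu^{\otimes n}$, whence $\l_1 = \mu$ by taking a marginal. For confinement, by Proposition~\ref{prop:alt_def_confined} it suffices to verify the $2$-fold property (i). Given a $2$-fold self-joining $\l$ of $\bfX$ in which the two copies of $\B$ are independent, I would pad it to $\l \otimes \mu^{\otimes(n-2)}$ on $X^n$ (here $n\geq 2$): this is $(T\times\cdots\times T)$-invariant, its $n$ copies of $\B$ are mutually independent, hence it pushes forward to $\nu^{\otimes n}$; RUE over $\nu^{\otimes n}$ then gives $\l \otimes \mu^{\otimes(n-2)} = \mu^{\otimes n}$, and restricting to the first two coordinates yields $\l = \mu \otimes \mu$.

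Finally, item (iii) combines the previous work with the cited external input: by (ii) the hypotheses ($\bfY$ ergodic, $\bfX \to \bfY$ confined) already make the extension weakly stable, and adding the RUE hypothesis, \cite[Proposition 3.13]{stable_extensions} upgrades weak stability to stability. I expect the main obstacle throughout to be the bookkeeping of the bridge between Robinson's relatively-uniquely-ergodic formulation and the joining-theoretic definition of confinement — concretely, the two observations that ``$(\pi_n)_*\l = \nu^{\otimes n}$'' encodes simultaneously the correctness of the marginals and the mutual independence of the $\B$-copies, and the padding trick that reduces the full confinement property to the product RUE statement.
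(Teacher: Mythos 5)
Your proof is correct and follows essentially the same route as the paper: item (ii) via Proposition~\ref{prop:lifting} plus the observation that confinement trivially implies the joining condition of weak stability, item (i) via the dictionary between $T_n$-invariant measures projecting to $\nu^{\otimes n}$ and $n$-fold self-joinings with mutually independent copies of $\s(\pi)$, and item (iii) by citing Robinson. You are in fact more self-contained in two places: where the paper invokes \cite[Lemma 3.7 (v)]{stable_extensions} to extract RUE from $n$-fold self-stability, you give the direct argument with $\l_1^{\otimes n}$; and where the paper asserts ``$n$-confined, therefore confined'' without comment, your padding of a $2$-fold joining by $\mu^{\otimes(n-2)}$ actually justifies that implication. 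One caveat you should make explicit: your padding argument (and indeed the implication ``$n$-fold self-stable $\Rightarrow$ confined'') only works for $n\geq 2$, and for $n=1$ the statement as written is actually false --- $1$-fold self-stability amounts to ``$\bfY$ weakly mixing and $\bfX\to\bfY$ RUE,'' which is satisfied for instance by a product $\bfX=\bfY\otimes\bfZ$ with $\bfY$ weakly mixing and $\bfZ$ a uniquely ergodic discrete-spectrum system, an extension that is manifestly not confined. This is a defect of the proposition (and of the paper's own proof, which glosses over it) rather than of your argument, but it is worth recording that (i) should be read with $n\geq 2$.
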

Using those results, we can answer some questions left open by Robinson (see \cite[\S 3.3]{stable_extensions}) on stable extensions: 
\begin{enumerate}
\item $2$-fold self-stability implies self-stability,
\item and self-stability implies stability. 
\end{enumerate}
Proving those result does not require the use of confined extensions, we could also have proven them directly using similar arguments to those used in Proposition \ref{prop:alt_def_confined}.

\begin{proof}[Proof of Proposition \ref{lem:stable-confined}]
In this proof we say that an extension satisfying Definition \ref{def:confined} (iii) is $n$-confined.

Let us prove (i): assume that $\bfX \overset{\pi}{\to} \bfY$ is $n$-fold self-stable. By definition, $\bfY$ is weakly mixing. Then, \cite[Lemma 3.7, (v)]{stable_extensions} gives the relative unique ergodicity. Finally, the $n$-fold self stability tells us that the only $T^n$-invariant measure which projects to $\nu^{\otimes n}$ is the product measure $\mu^{\otimes n}$, which means the extension $\bfX \overset{\pi}{\to} \bfY$ is $n$-confined, and therefore confined. Conversely, assume that $\bfY$ is weakly mixing and that $\bfX \overset{\pi}{\to} \bfY$ is confined and RUE. We then know that $\bfX \overset{\pi}{\to} \bfY$ is $n$-confined. To prove the $n$-fold self-stability: let $\l \in \P(X^n)$ be $T^n$-invariant and assume it projects to $\nu^{\otimes n}$. Using the RUE property of $\bfX \overset{\pi}{\to} \bfY$, we get that $\l$ is a $n$-fold joining of $\mu$. Then the $n$-confinement of $\bfX \overset{\pi}{\to} \bfY$ implies that it is the product joining, that is $\l = \mu^{\otimes n}$. 

We then prove (ii). Since $\bfY$ is ergodic and $\bfX \overset{\pi}{\to} \bfY$ confined, using Proposition \ref{prop:lifting}, we get that $\bfX$ is ergodic. Then (ii) follows from the definitions.

We get (iii) by combining (ii) and \cite[Proposition 3.13]{stable_extensions}.
%
%
\end{proof}

\subsubsection{GW-property}

The GW-property was introduced by Glasner and Weiss in \cite{GW_paper} and named so by Robinson in \cite{stable_extensions}. Robinson defines this property on topological models: that is, $X$ and $Y$ are compact metric spaces and $T$, $S$ and $\pi$ are continuous maps. However, it will be more convenient here to define it in the more general setup of standard Borel spaces. 

Let $\bfX := (X, \A, \mu, T)$, $\bfY := (Y, \B, \nu, S)$ where, as in the rest of this paper, $(X, \A)$ and $(Y, \B)$ are standard Borel spaces. Let $\pi: \bfX \to \bfY$ be a measurable factor map, that is, $\pi: (X, \A) \to (Y, \B)$ is a Borel map. We define $\M_\nu$ as the set of probability measures on $X$ which project to $\nu$ under $\pi$:
$$\M_\nu := \{\g \in \P(X) \, | \, \pi_*\g = \nu\}.$$
Since $\P(X)$ equipped with the weak* topology is a Polish space, $\M_\nu$ is a standard Borel space. Moreover, it is $T_*$-invariant, and therefore we can consider the measurable action of $T_*$ on $\M_\nu$. Note that $\mu$ is a fixed point for $T_*$, and therefore $\delta_\mu$ is a $T_*$-invariant measure on $\M_\mu$.
\begin{defi}
Using the notations introduced above, we say that $\bfX \overset{\pi}{\to} \bfY$ has the GW-property if $(\M_\nu, \delta_\mu, T_*)$ is uniquely ergodic, i.e. $\delta_\mu$ is the only $T_*$-invariant measure on $\M_\nu$.
\end{defi}
\begin{prop} \label{prop:GW_confined}
The extension $\bfX \overset{\pi}{\to} \bfY$ has the GW-property if and only if it is relatively uniquely ergodic and confined.
\end{prop}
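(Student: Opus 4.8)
The plan is to set up a dictionary between the $T_*$-invariant probability measures on $\M_\nu$ and the joinings of $\bfX$ with an auxiliary system in which the copy of $\B$ is independent of that system, and then to read off both implications from this dictionary using characterization (ii) of confinement in Definition \ref{def:confined}. Under this dictionary, relative unique ergodicity will control the \emph{marginals} (it forces the $X$-marginal of such a joining to be $\mu$), while confinement will force the joining to be a \emph{product}; together these two facts pin down the $T_*$-invariant measures on $\M_\nu$.

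For the implication GW $\Rightarrow$ RUE and confined, the RUE part is immediate: if $\l$ is $T$-invariant with $\pi_*\l = \nu$, then $\l \in \M_\nu$ is a fixed point of $T_*$, so $\delta_\l$ is $T_*$-invariant and the GW-property gives $\delta_\l = \delta_\mu$, i.e. $\l = \mu$. For confinement I would start from an arbitrary joining $\l$ of $\bfX$ with a system $\bfZ := (Z, \C, \rho, R)$ in which $\B$ and $\bfZ$ are independent, and disintegrate it over the $Z$-coordinate as $\l = \int_Z \l_z \otimes \delta_z \, d\rho(z)$. The independence of $\B$ and $\bfZ$ says exactly that the conditional law of $\pi(x)$ given $z$ does not depend on $z$, that is $\pi_*\l_z = \nu$ for $\rho$-almost every $z$; hence $z \mapsto \l_z$ is a measurable map $\Psi : Z \to \M_\nu$, and the $(T \times R)$-invariance of $\l$ translates into the equivariance relation $\l_{Rz} = T_*\l_z$ almost everywhere. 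Thus $\Psi_*\rho$ is a $T_*$-invariant probability measure on $\M_\nu$, so the GW-property forces $\Psi_*\rho = \delta_\mu$, i.e. $\l_z = \mu$ for almost every $z$. Then $\l = \mu \otimes \rho$, which is the desired independence of $\A$ and $\bfZ$.

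For the converse, RUE and confined $\Rightarrow$ GW, I would take any $T_*$-invariant probability measure $m$ on $\M_\nu$ and build the system $\bfZ := (\M_\nu, m, T_*)$ together with the measure $\l := \int_{\M_\nu} \gamma \otimes \delta_\gamma \, dm(\gamma)$ on $X \times \M_\nu$. A short computation shows $\l$ is $(T \times T_*)$-invariant with $\M_\nu$-marginal $m$, and its $X$-marginal $\ol{m} := \int \gamma \, dm(\gamma)$ is $T$-invariant and projects to $\nu$; by RUE, $\ol{m} = \mu$, so $\l$ is genuinely a joining of $\bfX$ and $\bfZ$. Since every $\gamma \in \M_\nu$ satisfies $\pi_*\gamma = \nu$, the conditional law of $\pi(x)$ given the $\M_\nu$-coordinate is the constant measure $\nu$, so $\B$ and $\bfZ$ are independent in $\l$. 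Confinement then gives that $\A$ and $\bfZ$ are independent, i.e. $\l = \mu \otimes m$; comparing this with the defining disintegration $\l = \int \gamma \otimes \delta_\gamma \, dm(\gamma)$ forces $\gamma = \mu$ for $m$-almost every $\gamma$, that is $m = \delta_\mu$, which is precisely the GW-property.

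I expect the only real care to lie in the measure-theoretic bookkeeping rather than in any deep idea: one must check that $\M_\nu$ is a standard Borel space on which $T_*$ acts as an invertible measure-preserving Borel automorphism, so that $(\M_\nu, m, T_*)$ is a bona fide dynamical system eligible in Definition \ref{def:confined}, and that the disintegrations above are legitimate and equivariant. The genuine crux of the argument is the translation, in both directions, between the statement ``$\B$ is independent of $\bfZ$'' and the condition ``the conditional measures project to $\nu$'' (equivalently, lie in $\M_\nu$); once this is in place, RUE and confinement plug in cleanly to handle the marginal and the product structure respectively.
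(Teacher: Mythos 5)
Your proof is correct and follows essentially the same route as the paper: the same disintegration over the $Z$-coordinate to produce a $T_*$-invariant measure on $\M_\nu$ in one direction, and the same quasifactor construction $\l := \int \gamma \otimes \delta_\gamma \, dm(\gamma)$ with RUE identifying the $X$-marginal in the other. No gaps.
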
 
This equivalence relies on the canonical relation between quasifactors (see \cite[Chapter 8]{glasner}) and joinings.

\begin{proof}
The first part of the proof will be similar to \cite[Proposition 2.1]{GW_paper}. Assume that $\bfX \overset{\pi}{\to} \bfY$ has the GW-property. Let $\bfZ := (Z, \C, \rho, R)$ be a dynamical system and $\l$ a measure which gives us a joining $\bfX \times _\l \bfZ$ where $\bfY$ and $\bfZ$ are independent. We decompose $\l$ over $\bfZ$:
$$\l = \int \mu_z \otimes \delta_z \, d \rho(z).$$
We set $\phi: z \mapsto \mu_z$ and $\g := \phi_* \rho \in \P(\P(X))$. Since $\bfY$ and $\bfZ$ are independent, we have $\pi_*\mu_z = \nu$, $\rho$-almost surely. So $\g$ is supported on $\M_\nu$. Moreover, $\phi$ satisfies the equivariance condition: $\mu_{Rz} = T_* \mu_z$. Therefore, $\g$ is $T_*$-invariant. Finally, the GW-property implies $\g = \delta_\mu$, which in turn yields $\mu_z = \mu$, $\rho$-almost surely. This shows that $\l = \mu \otimes \rho$, so the extension is confined.

We now prove that $\bfX \overset{\pi}{\to} \bfY$ is RUE: take a $T$-invariant measure $\l$ such that $\pi_*\l = \nu$. Then $\l \in \M_\nu$ and it is a fixed point of $\T_*$, therefore, $\delta_\l$ is a $T_*$-invariant measure, so, by the GW property, $\delta_\l = \delta_\mu$. Therefore, $\l = \mu$, and this proves that the extension is RUE.

Conversely, assume that $\bfX \overset{\pi}{\to} \bfY$ is confined and RUE. Let $\rho$ be a $T_*$-invariant measure on $\M_\nu$, and define the associated quasifactor: $\bfZ := (\M_\nu, \rho, T_*)$. We can then use the induced joining
$$\l := \int \tilde \mu \otimes \delta_{\tilde\mu} \, d\rho (\tilde\mu) \in \P(X \times \M_\nu).$$
It projects to $\rho$ on $\M_\nu$. Let us focus on the projection on $X$: $\int \tilde\mu \, d\rho(\tilde\mu)$. Since $\rho$ is supported on $\M_\nu$ and $T_*$-invariant, this measure projects to $\nu$ on $Y$, and is $T$-invariant. Now, using our relative unique ergodicity assumption, this means that $\int \tilde\mu \, d\rho(\tilde\mu) = \mu$. In conclusion, $\rho$ is a joining of $\bfX$ and $\bfZ$. Moreover, we have the computation
\begin{align*}
{(\pi \times Id_\bfZ)}_*\l &= \int \pi_*\tilde\mu \otimes \delta_{\tilde\mu} \, d\rho(\tilde\mu)\\
&= \int \nu \otimes \delta_{\tilde\mu} \, d\rho(\tilde\mu) = \nu \otimes \rho.
\end{align*}
Then, since $\bfX \overset{\pi}{\to} \bfY$ is confined, it follows that $\l = \mu \otimes \rho$. Given the construction of $\l$, it means that $\rho$-almost surely, $\tilde \mu = \mu$, which we can write as $\rho = \delta_\mu$.
\end{proof}

\subsubsection{Relatively uniquely ergodic models of confined extensions}

For a given extension $\A \arr \B$ on $\bfX := (X, \A, \mu, T)$, we can find extensions defined on other dynamical systems that are isomorphic to $\A \arr \B$. Moreover, as we have already mentioned, once the system $\bfX$ is chosen, there exists a system $\bfY := (Y, \C, \nu, S)$ and a factor map $\pi: \bfX \to \bfY$ such that $\B = \pi^{-1}(\C)$ mod $\mu$, but $\pi$ and $\bfY$ are not unique. Once $\bfX$, $\bfY$ and $\pi$ are fixed, we say that $\bfX \overset{\pi}{\to} \bfY$ is a \emph{model} of $\A \arr \B$.  When studying properties invariant under isomorphism like confinement or standardness, the exact choice of the model of $\A \arr \B$ has no impact on our results. 

However, stability and the GW-property are not invariant under isomorphism, and are therefore specific to one model. The purpose of this section is to determine which of the differences between confinement and stability or the GW-property are solely due to a choice of the model. We state our result in the following theorem, where we see, in particular, that, up to the choice of the model, confinement and the GW-property are equivalent.
\begin{thm} Let $\bfX \overset{\pi}{\to} \bfY$ be an extension and let $n\geq 1$. We have
\begin{enumerate} [label = (\roman*)]
\item $\bfY$ is weakly mixing and $\bfX \overset{\pi}{\to} \bfY$ is confined if and only if $\bfX \overset{\pi}{\to} \bfY$ has a $n$-fold self-stable model.
\item If $\bfY$ is ergodic and $\bfX \overset{\pi}{\to} \bfY$ is confined, then $\bfX \overset{\pi}{\to} \bfY$ has a stable model.
\item $\bfX \overset{\pi}{\to} \bfY$ is confined if and only if it has a model which has the GW-property.
\end{enumerate}
\end{thm}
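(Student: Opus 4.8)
The plan is to reduce every assertion to a single structural fact—\emph{every confined extension is isomorphic to a relatively uniquely ergodic one}—and then to read off the conclusions from Propositions \ref{lem:stable-confined} and \ref{prop:GW_confined}, exploiting that confinement, ergodicity and weak mixing are invariant under isomorphism while stability, self-stability, the GW-property and the RUE property are not.

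I would first dispatch the easy ``model exists $\Rightarrow$ confined'' implications. If $\bfX \overset{\pi}{\to} \bfY$ has an $n$-fold self-stable model $\bfX' \overset{\pi'}{\to} \bfY'$, then Proposition \ref{lem:stable-confined} (i) says $\bfY'$ is weakly mixing and $\bfX' \overset{\pi'}{\to} \bfY'$ is confined; since both properties are isomorphism invariants this transfers back to $\bfX \overset{\pi}{\to} \bfY$, giving the reverse direction of (i). Likewise, if the extension has a model with the GW-property, Proposition \ref{prop:GW_confined} makes that model confined, hence so is $\bfX \overset{\pi}{\to} \bfY$, which is the reverse direction of (iii).

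For the forward directions I would invoke the key lemma: choose an RUE model $\bfX' \overset{\pi'}{\to} \bfY'$ of the given extension. By isomorphism invariance $\bfX' \overset{\pi'}{\to} \bfY'$ is again confined, and $\bfY'$ inherits whatever property $\bfY$ carries (weak mixing in (i), ergodicity in (ii)). It then suffices to feed this into the earlier results: Proposition \ref{lem:stable-confined} (i) turns ``weakly mixing $+$ confined $+$ RUE'' into $n$-fold self-stability, yielding (i); Proposition \ref{lem:stable-confined} (iii) turns ``ergodic $+$ confined $+$ RUE'' into stability, yielding (ii); and Proposition \ref{prop:GW_confined} turns ``confined $+$ RUE'' into the GW-property, yielding (iii).

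Thus the whole theorem rests on producing an RUE model of a confined extension, and this is where the real work lies. In the ergodic case the construction is clean: when $\bfY$ is ergodic, confinement forces $\bfX$ to be ergodic too (ergodicity lifts along confined extensions, Proposition \ref{prop:lifting}), and a relative Jewett--Krieger strictly ergodic model $\bfX' \overset{\pi'}{\to} \bfY'$ with $\pi'$ continuous is then automatically RUE, since a uniquely ergodic system carries a single invariant measure and \emph{a fortiori} a single one projecting to $\nu'$. This already settles (i) and (ii) and the ergodic part of (iii). The main obstacle is the general case of (iii), where $\bfX$ need not be ergodic and unique ergodicity is unavailable: here I would build the RUE model by disintegrating over the ergodic decomposition of $\nu$, selecting strictly ergodic fibrewise models measurably and re-integrating, the delicate point being to check that confinement survives this patching and that no spurious invariant measure over $\nu'$ is created. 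This relative-unique-ergodicity construction—the isomorphism announced in the introduction—is the technical heart of the argument; once it is in hand, the theorem follows formally as above.
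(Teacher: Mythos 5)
Your formal reduction is exactly the paper's: the easy directions follow from isomorphism invariance of confinement together with Propositions \ref{lem:stable-confined} and \ref{prop:GW_confined}, and the forward directions reduce to producing a relatively uniquely ergodic model of a confined extension (Proposition \ref{prop:model_confined_ext} in the paper). Your treatment of the ergodic case via the relative Jewett--Krieger theorem is also fine and is what the paper itself attributes to Weiss. The problem is the general case of (iii), which you correctly identify as the technical heart but whose sketch omits the one ingredient without which it cannot work.

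The missing idea is that \emph{a confined extension is relatively ergodic}, i.e.\ $\I_X = \pi^{-1}(\I_Y)$ mod $\mu$ (Lemma \ref{lem:confined_rel_ergodic}). This is not a technicality one can absorb into ``careful patching'': if some ergodic component $\tilde\nu$ of $\nu$ carried two distinct ergodic components $\mu_1 \neq \mu_2$ of $\mu$ above it, then both $\pi_*\mu_1$ and $\pi_*\mu_2$ would equal $\tilde\nu$ (they decompose the ergodic measure $\tilde\nu$), so \emph{every} model would admit at least two invariant measures projecting to $\tilde\nu$, and no RUE model could exist. So relative ergodicity is a necessary precondition, and it must be derived from confinement before any fibrewise construction starts; the paper does this by applying the static super-innovation result to $\I_X \arr \I_X \cap \B$, transporting the resulting $\sigma$-algebra to a super-innovation of $\B \vee \I_X \arr \B$, and invoking Proposition \ref{prop:extension_not_confined}. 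Once that is in place, your worry that ``confinement survives the patching'' is vacuous (confinement is an isomorphism invariant, and the model is isomorphic to the original extension), and in fact no fibrewise Jewett--Krieger selection is needed: the paper simply restricts to generic points, uses relative ergodicity to see that $\pi_*$ is a measurable bijection between the ergodic components of $\mu$ and those of $\nu$, and deduces relative unique ergodicity from uniqueness of the ergodic decomposition. As written, your argument has a genuine gap at exactly this point.
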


The key assumption on the choice of a model that emerges in Propositions \ref{lem:stable-confined} and \ref{prop:GW_confined} is relative unique ergodicity. Therefore, we want to show that

\begin{prop} \label{prop:model_confined_ext}
A confined extension is isomorphic to a relatively uniquely ergodic extension. 
\end{prop}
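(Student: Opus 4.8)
The plan is to take the given confined extension $\A \arr \B$ and replace its model by an isomorphic one $\pi : \bfX' \to \bfY'$ in which $\nu'$ has a single $T'$-invariant lift, so that Definition \ref{def:RUE} is satisfied. The reason this requires an argument is that a $T$-invariant measure $\l$ with $\pi_*\l = \nu$ and $\l \neq \mu$ is concentrated off the support of $\mu$, where an arbitrary model imposes no constraint, whereas confinement only controls the self-joinings of $(X, \A, \mu, T)$ itself; this is also why relative unique ergodicity, unlike confinement, depends on the chosen model. First I would fix a topological model, so that $X$ and $Y$ are compact metric spaces, $T$, $S$ and $\pi$ are continuous, and the set $\L_\nu$ of $T$-invariant lifts of $\nu$ is a compact convex subset of $\P(X)$ containing $\mu$. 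The goal then becomes to choose the model so that $\L_\nu = \{\mu\}$.

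The step where confinement is used is a reduction to the ergodic situation. I would first show that confinement forces the $\s$-algebra of invariant sets of $\bfX$ to be contained in $\B$ mod $\mu$: if some invariant set escaped $\B$, one could exploit the resulting mismatch between the ergodic decompositions of $\mu$ and $\nu$ to build a $2$-fold self-joining of $\bfX$ with independent copies of $\B$ but correlated copies of $\A$, contradicting Definition \ref{def:confined} (i). (That ergodicity itself is inherited through the extension is already part of the lifting results, Proposition \ref{prop:lifting}.) Consequently the extension is ergodic over $\nu$-almost every ergodic component of the base, which is exactly the setting in which a relatively uniquely ergodic model can be expected to exist.

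With this reduction, the plan is to construct the model over the ergodic decomposition of $\nu$: over each ergodic component the extension is ergodic, an ergodic extension admits a relatively (even strictly) uniquely ergodic model whose only invariant measure lifting the base is the prescribed one, and integrating these models over the decomposition produces a global model in which $\L_\nu = \{\mu\}$. I expect the main obstacle to be precisely this construction: controlling the invariant measures that live off the support of $\mu$ while keeping the mod-$\mu$ isomorphism class of the extension intact, and carrying out the measurable gluing of the fiberwise models over a possibly non-ergodic base, with confinement entering only through the alignment of ergodic decompositions of the previous paragraph, which legitimizes the fiberwise reduction. As a final check, once such an RUE model is obtained, Proposition \ref{prop:GW_confined} shows that it automatically has the GW-property, so the construction also reconciles confinement with Robinson's and Glasner--Weiss's notions.
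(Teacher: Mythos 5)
Your two-step skeleton --- first show that confinement forces $\I_X \subset \B$ mod $\mu$, then build a relatively uniquely ergodic model of a relatively ergodic extension --- is exactly the paper's (Lemmas \ref{lem:confined_rel_ergodic} and \ref{lem:rel_ergodic}), but both of your steps are left at the level of a sketch and the second one hides a genuine gap. For the first step, your idea of exploiting a ``mismatch between the ergodic decompositions of $\mu$ and $\nu$'' to produce a $2$-fold self-joining with independent copies of $\B$ but correlated copies of $\A$ can be made to work: one must couple the ergodic decompositions by a measure $Q$ on $\Omega\times\Omega$ whose marginals are $P$, whose pushforward to the base decomposition is the product, and which is not itself the product; this exists precisely when the fibers of the decomposition map are non-Dirac on a positive measure set, and it requires a measurable selection of non-product couplings that you have not supplied. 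The paper takes a cleaner route: it applies the \emph{static} super-innovation result (Proposition \ref{prop:super-innov_static}) to the extension $\I_X \arr \I_X\cap\B$, transfers the resulting $\sigma$-algebra to show that $\B\vee\I_X \arr \B$ admits a super-innovation, and concludes by Proposition \ref{prop:extension_not_confined} that this extension is trivial.

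The real gap is in your second step. You propose to fix a topological model, apply Weiss's strict unique ergodicity theorem over each ergodic component of the base, and ``integrate these models over the decomposition.'' Measurably gluing a family of fiberwise topological models over a possibly non-ergodic base --- while keeping the mod-$\mu$ isomorphism class of the extension and controlling invariant measures off the support of $\mu$ --- is precisely the difficulty you flag as the main obstacle, and you do not resolve it; the paper explicitly avoids it, noting that a topological RUE model in the non-ergodic case would require the machinery of Weiss and Downarowicz and is not established here. Instead, Lemma \ref{lem:rel_ergodic} stays entirely in the Borel category: using generic points, one defines the maps $\Phi_X$ and $\Phi_Y$ sending a point to the ergodic measure for which it is generic, shows via relative ergodicity that $\sigma(\Phi_X)=\sigma(\Phi_Y\circ\pi)$ mod $\mu$, deduces that $\pi_*$ restricts to a measurable bijection between full-measure sets of ergodic components of $\mu$ and $\nu$, and then restricts $\bfX$ and $\bfY$ to the corresponding invariant subsets $X_1, Y_1$. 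On that model, any $T$-invariant lift $\l$ of $\nu_1$ decomposes over $\Phi_{X_1}$ into ergodic components which are forced, by the bijectivity of $\pi_*$ and the uniqueness of the ergodic decomposition of $\nu_1$, to reproduce those of $\mu_1$, whence $\l=\mu_1$. No fiberwise topological construction is needed, at the price of obtaining only a standard Borel model with a Borel factor map.
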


We recall that an extension $\A \arr \B$ on $\bfX := (X, \A, \mu, T)$ is relatively ergodic if every $\A$-measurable $T$-invariant set is in $\B$. 

Our proof of Proposition \ref{prop:model_confined_ext} will be done in two steps: first we show in Lemma \ref{lem:confined_rel_ergodic} that confined extensions are relatively ergodic, and then we show in Lemma \ref{lem:rel_ergodic} that a relatively ergodic extension admits a relatively uniquely ergodic model.

\begin{lem} \label{lem:confined_rel_ergodic}
A confined extension is relatively ergodic. 
\end{lem}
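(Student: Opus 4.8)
The plan is to prove the contrapositive: if an extension $\A \arr \B$ is \emph{not} relatively ergodic, then it is not confined. So suppose there is an $\A$-measurable, $T$-invariant set $A$ that is not in $\B$ mod $\mu$. The idea is to use this invariant set to manufacture a non-product self-joining of $\bfX$ in which the two copies of $\B$ are nevertheless independent, contradicting Definition \ref{def:confined} (i).

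First I would observe that, after replacing $A$ by an appropriate level set, we may assume $A$ is a genuine invariant set with $0 < \mu(A) < 1$ whose indicator is not $\B$-measurable. The function $f := \mu(A \mid \B) = \bbE[\mathbbm{1}_A \mid \B]$ is $\B$-measurable and $T$-invariant, and since $\mathbbm{1}_A$ is $T$-invariant but not $\B$-measurable, the conditional variance $\bbE[(\mathbbm{1}_A - f)^2]$ is strictly positive. The point is that $\mathbbm{1}_A$ carries information transverse to $\B$ that is frozen by the dynamics. I would then build a joining of $\bfX$ with itself by coupling the two copies so that their $\B$-parts are glued together (relatively independent over $\B$) but their copies of the invariant set $A$ are \emph{correlated}. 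Concretely, on the relatively independent self-product $\bfX \otimes_\bfY \bfX$ — where $\pi \colon \bfX \to \bfY$ realizes $\B$ — the two copies $A_1$ and $A_2$ of the invariant set satisfy $\bbE[\mathbbm{1}_{A_1}\mathbbm{1}_{A_2}] = \int f^2 \, d\mu$, whereas the product joining would give $(\int f \, d\mu)^2 = \mu(A)^2$. By the strict positivity of the conditional variance (Cauchy--Schwarz with equality iff $f$ is constant, i.e. iff $A$ is independent of $\B$), these two quantities differ.

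The cleanest route, then, is to exhibit the relatively independent self-joining $\l := \mu \otimes_\bfY \mu$ directly as a witness. It is a $2$-fold self-joining of $\bfX$; by Lemma \ref{lem:rel_prod} the two copies of $\B$ in it are independent (they are literally identified over $\bfY$, hence relatively independent over their common factor, and the lemma translates independence of $\B$ with itself into this relative product — I would take care to check the direction of the lemma actually used here, since the relative product over $\B$ is precisely the joining in which the $\B$-copies are maximally coupled, so the correct witness may instead be a coupling that keeps $\B$ \emph{independent} while correlating $A$). To get a joining with \emph{independent} $\B$-copies but non-independent $\A$-copies, I would instead define $\l$ on $X \times X$ by prescribing, conditionally on the $\B$-factors being independent, a correlation between $\mathbbm{1}_{A_1}$ and $\mathbbm{1}_{A_2}$; since $A$ is $T$-invariant, any such static coupling of the invariant sets extends to a $T \times T$-invariant measure, because invariance of the set means the coupling is automatically preserved by the dynamics along orbits. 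This last point is where the dynamical hypothesis does real work: it is precisely the $T$-invariance of $A$ that lets a non-dynamical (static) correlation survive as a genuine dynamical joining.

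The main obstacle I expect is the construction of this joining so that it is simultaneously $(T\times T)$-invariant, a legitimate coupling of $\mu$ with itself, keeps the two $\B$-copies independent, and yet forces $\mathbbm{1}_{A_1}$ and $\mathbbm{1}_{A_2}$ to be correlated. The tension is that if one couples through the relative product over $\B$ one gets correlated $\B$'s, whereas a naive independent product decorrelates $A$ as well. The resolution should exploit that $A$ being $T$-invariant means $\mathbbm{1}_A$ is measurable with respect to the \emph{invariant} $\s$-algebra, so one can condition on the invariant factor rather than on $\B$: build the coupling to be relatively independent over $\B$ but to \emph{identify} the two copies of the invariant set $A$ (which sits above $\B$ nontrivially), yielding $\bbE[\mathbbm{1}_{A_1}\mathbbm{1}_{A_2}] = \int f \, d\mu = \mu(A) \neq \mu(A)^2$ while the $\B$-copies remain independent because $A$'s transverse information is invisible to $\B$. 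Verifying that this prescription defines a bona fide $(T\times T)$-invariant probability measure — and that the $\B$-marginals really do stay independent — is the crux; once that joining is in hand, it is manifestly not the product joining, so Definition \ref{def:confined} (i) fails and the extension is not confined.
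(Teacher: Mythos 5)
Your overall strategy --- prove the contrapositive by using an invariant set $A$ with $A \notin \B$ mod $\mu$ to manufacture a $2$-fold self-joining in which the copies of $\B$ are independent but the copies of $\A$ are not --- is viable, and it is genuinely different from the paper's argument. But the witness joining is never actually constructed, and the two concrete prescriptions you offer do not work. First, ``relatively independent over $\B$'' is self-defeating: in $\mu \otimes_{\bfY} \mu$ the two copies of $\B$ are \emph{identified}, hence as far from independent as possible, so that joining cannot contradict Definition \ref{def:confined} (i); you half-notice this, yet the phrase reappears in your final paragraph. Second, ``identify the two copies of $A$ while keeping the $\B$-copies independent'': your computation $\bbE[\mathbbm{1}_{A_1}\mathbbm{1}_{A_2}]=\mu(A)$ presupposes that such an identification is compatible with independence of the $\B$'s, which is a nontrivial constraint when $\mathbbm{1}_A$ is correlated with $\B$ --- over an independent pair $(y_1,y_2)$ of $\B$-fibers the two indicators have conditional laws $\mu_{y_1}(A)\neq\mu_{y_2}(A)$ and cannot in general be made a.s.\ equal. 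Most importantly, the sentence ``invariance of the set means the coupling is automatically preserved by the dynamics along orbits'' is an assertion, not an argument: prescribing the joint law of $(\mathbbm{1}_{A_1},\mathbbm{1}_{A_2},\B_1,\B_2)$ does not determine a measure on $X\times X$, and exhibiting a genuine $(T\times T)$-invariant probability measure with marginals $\mu$ and the required properties is exactly the content of the lemma. This is the gap.

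It can be closed cheaply, staying within your framework. Set $h:=\mathbbm{1}_A-\bbE[\mathbbm{1}_A\mid\B]$; then $h$ is bounded, $T$-invariant (because $A$ is $T$-invariant and $T^{-1}\B=\B$ mod $\mu$), satisfies $\bbE[h\mid\B]=0$, and $h\not\equiv 0$ since $A\notin\B$. For $0<\eps\le \|h\|_\infty^{-2}$ define $\d\l:=\bigl(1+\eps\, h(x_1)h(x_2)\bigr)\,\d(\mu\otimes\mu)$. This is a $(T\times T)$-invariant probability measure with both marginals equal to $\mu$ (since $\bbE[h]=0$); for $B_1,B_2\in\B$ one has $\l(B_1\times B_2)=\mu(B_1)\mu(B_2)$ because $\bbE[\mathbbm{1}_{B_i}h]=\bbE[\mathbbm{1}_{B_i}\,\bbE[h\mid\B]]=0$, so the copies of $\B$ are independent; yet $\int h_1h_2\,\d\l=\eps\,\bbE[h^2]^2>0$, so $\l\neq\mu\otimes\mu$ and the extension is not confined. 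Note that this is quite far from the paper's route: the paper observes that $T$ acts as the identity on the invariant factor $\I_X$, applies the static super-innovation result (Proposition \ref{prop:super-innov_static}) to $\I_X\arr\I_X\cap\B$, transfers the resulting $\s$-algebra to a super-innovation of $\B\vee\I_X\arr\B$, and concludes with Proposition \ref{prop:extension_not_confined}. The corrected direct construction above is more elementary and self-contained; the paper's argument has the advantage of reusing machinery (super-innovations, Proposition \ref{prop:extension_not_confined}) already developed for other purposes.
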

\begin{proof}
Let $\bfX := (X, \A, \mu, T)$ be a dynamical system, $\A \arr \B$ a confined extension and denote $\I_X$ the invariant factor of $\bfX$. We aim to show that $\I_X \subset \B$. Set $\I_B := \I_X \cap \B$. Since $T$ acts as the identity map on $\I_X$, the extension $\I_X \arr \I_B$ admits a super-innovation (see Proposition \ref{prop:super-innov_static}): there exist a probability space $(\Omega, \E, \bbP)$, a measure preserving map $p: \Omega \to \quotient{\bfX}{\I_X}$ and a $\sigma$-algebra $\C \subset \E$ independent of $p^{-1}(\I_B)$ such that $p^{-1}(\I_X) \subset p^{-1}(\I_B) \vee \C$. We now want to use $\C$ to get a super-innovation for the extension $\B \vee \I_X \arr \B$. 

Viewing $\mathbf{\Omega} := (\Omega, \E, \bbP, \mathrm{Id})$ as a dynamical system, we can set $\bfZ := (Z, \D, \rho, R)$ to be the relative product of $\bfX$ and $\mathbf{\Omega}$ over $\I_X$, and let $\tilde\C$ be the copy of $\C$ on $\bfZ$. We then get, on $\bfZ$:
\begin{eq} \label{eq:super-innov_invariants}
\B \vee \I_X \subset \B \vee \tilde\C.
\end{eq}
Moreover, $\tilde\C$ is independent of $\B$: let $C$ be a $\tilde\C$-measurable random variable, and consider $\bbE[C \, | \, \B]$. Since $\B$ is an invariant $\sigma$-algebra, we have
$$\bbE[C \, | \, \B] \circ R = \bbE[C \circ \mathrm{Id} \, | \, \B] = \bbE[C \, | \, \B].$$
Therefore,  $\bbE[C \, | \, \B]$ is $\I_B$-measurable, which means that $\bbE[C \, | \, \B] = \bbE[C \, | \, \I_B] = \bbE[C]$, so $C$ is independent of $\B$. Combining this with \eqref{eq:super-innov_invariants} shows that the extension $\B \vee \I_X \arr \B$ admits a super-innovation. However, it is also confined, so Proposition \ref{prop:extension_not_confined} implies that it is a trivial extension: $\B \vee \I_X = \B$ mod $\mu$ and that yields $\I_X \subset \B$ mod $\mu$.
\end{proof}

We continue our proof of Proposition \ref{prop:model_confined_ext} with the following lemma:

\begin{lem} \label{lem:rel_ergodic}
Any relatively ergodic extension $\A \arr \B$ has a relatively uniquely ergodic model. 
\end{lem}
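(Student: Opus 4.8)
The plan is to upgrade relative ergodicity to relative unique ergodicity by choosing a good Borel realization, using the relative Jewett--Krieger theorem together with a passage through the ergodic decomposition over the common invariant factor. First I would record the structural consequence of the hypothesis. Since every $T$-invariant $\A$-set lies in $\B$, the invariant factor $\I_X$ of $\bfX$ satisfies $\I_X = \pi^{-1}(\I_Y)$, where $\I_Y$ denotes the invariant factor of $\bfY$. Hence the ergodic decompositions of $\mu$ and of $\nu$ are indexed by the same base $\quotient{\bfX}{\I_X} = (\O, \E, \bbP, \id)$: writing $\mu = \int_\O \mu_\o \, d\bbP(\o)$ and $\nu = \int_\O \nu_\o \, d\bbP(\o)$, we have $\pi_*\mu_\o = \nu_\o$ for $\bbP$-a.e. $\o$, and each $\mu_\o \arr \nu_\o$ is an extension of ergodic systems.

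The reduction of the RUE property to a fiberwise statement is the easy half. Suppose we have produced a model with the following property $(\ast)$: for $\bbP$-a.e. $\o$, $\mu_\o$ is the only $T$-invariant probability measure on $X$ projecting to $\nu_\o$. Let $\l$ be any $T$-invariant measure with $\pi_*\l = \nu$, and decompose it into ergodic components $\l = \int \l_s \, dQ(s)$. Each $\pi_*\l_s$ is ergodic, and $\int \pi_*\l_s \, dQ(s) = \nu = \int \nu_\o \, d\bbP(\o)$; by uniqueness of the ergodic decomposition of $\nu$, for $Q$-a.e. $s$ the measure $\pi_*\l_s$ coincides with some component $\nu_{\o(s)}$. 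Since $\nu_{\o(s)}$ is ergodic, the ergodic components of any invariant lift of it again project to $\nu_{\o(s)}$, so $(\ast)$ forces $\l_s = \mu_{\o(s)}$. Integrating over $s$ gives $\l = \mu$, which is exactly relative unique ergodicity over $\nu$. Thus it suffices to build a model satisfying $(\ast)$.

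The construction of such a model is the main obstacle, and it is where I would invoke the relative Jewett--Krieger theorem. For a single ergodic extension $\mu_\o \arr \nu_\o$, fixing a uniquely ergodic model of the base and applying relative Jewett--Krieger produces a uniquely ergodic model of the total ergodic system realizing $\pi$; being uniquely ergodic, this model has $\mu_\o$ as its only invariant measure, and in particular $(\ast)$ holds over that component. The genuine difficulty is that when $\bfX$ itself is not ergodic — which is the typical situation, since relative ergodicity only forces $\I_X \subseteq \B$ — no single uniquely ergodic total model can exist, as unique ergodicity entails ergodicity. One must therefore carry out the relative Jewett--Krieger construction measurably in $\o$ and glue the fiber models into one Borel system fibered over $\O$, arranged so that the only invariant measures projecting to each $\nu_\o$ are the intended ergodic ones. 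This uniform, measurable version of relative Jewett--Krieger over the invariant factor is the crux of the argument; granting it, property $(\ast)$ holds, and the averaging step above then yields the desired relatively uniquely ergodic model, completing the proof of Proposition \ref{prop:model_confined_ext} via Lemmas \ref{lem:confined_rel_ergodic} and \ref{lem:rel_ergodic}.
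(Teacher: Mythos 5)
Your reduction of relative unique ergodicity to the fiberwise statement $(\ast)$ is sound, and the identification $\I_X = \pi^{-1}(\I_Y)$ is exactly the consequence of relative ergodicity that the paper also extracts. But there is a genuine gap precisely where the content of the lemma lies: the construction of a model satisfying $(\ast)$. You invoke a ``uniform, measurable version of relative Jewett--Krieger over the invariant factor'' and then grant it. This is not an off-the-shelf result: Weiss's relative Jewett--Krieger theorem applies to an \emph{ergodic} extension of a uniquely ergodic base, and the paper itself remarks that already producing a topological model in the non-ergodic case would require extending the Weiss--Downarowicz machinery. Carrying the fiber constructions out measurably in $\o$ and gluing them into one Borel system, while controlling which invariant measures live on the glued space, is at least as hard as the lemma you are trying to prove; as written, the crux is asserted rather than established.

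The paper's argument shows that no Jewett--Krieger-type input is needed. Restrict $X$ and $Y$ to the invariant Borel sets of full measure consisting of points generic for an ergodic measure, and let $\Phi_X$ (resp.\ $\Phi_Y$) send a point to the ergodic measure for which it is generic. Relative ergodicity gives $\s(\Phi_X) = \s(\Phi_Y \circ \pi)$ mod $\mu$, hence a measurable $\Gamma$ with $\Gamma \circ \pi_* = \mathrm{Id}$ almost everywhere for the law of $\Phi_X$; so $\pi_*$ restricts to a Borel bijection between full-measure sets $\Sigma_X \subset \P(X)$ and $\Sigma_Y \subset \P(Y)$ carrying the two ergodic decompositions. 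Restricting further to $X_1 := \Phi_X^{-1}(\Sigma_X)$ forces every ergodic invariant measure carried by $X_1$ to belong to $\Sigma_X$, since an ergodic measure is the one for which its typical points are generic. Consequently any invariant $\l$ on $X_1$ with $\pi_*\l = \nu$ has its component distribution pushed by $\pi_*$ onto that of $\nu$, and injectivity of $\pi_*$ on $\Sigma_X$ gives $\l = \mu$. In other words, your fiberwise property $(\ast)$ comes for free from the restriction to generic points rather than from a realization theorem, and the whole proof stays elementary. If you want to salvage your route, you would need to actually prove the measurable gluing statement, which I would not recommend when the generic-point argument is available.
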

The case when $\B$ (and therefore $\A$) is ergodic is already known: it is a result from Weiss (\cite{strict_erg_models}). Weiss's result gives a stronger conclusion than ours since it gives a \emph{topological} RUE model. In the non-ergodic case, we only get a model $\bfX \overset{\pi}{\to} \bfY$ where $X$ and $Y$ are standard Borel spaces and $\pi$ is a Borel map. One could try to improve this result and build a model where $\bfX$ and $\bfY$ are topological systems by making use of Weiss and Downarowicz's result from \cite{pure_models}. By improving the result from \cite{pure_models}, one might also be able to get a model where $\pi$ is continuous.

\begin{proof}
Let $\A \arr \B$ be a relatively ergodic extension on $\bfX := (X, \A, \mu, T)$. Start by taking a system $\bfY := (Y, \C, \nu, S)$ and a factor map $\pi: \bfX \to \bfY$ such that $\B = \s(\pi)$ mod $\mu$. Since $(X, \A)$ and $(Y, \C)$ are standard Borel spaces, we can chose Polish topologies $\scrT_X$ and $\scrT_Y$ such that $\A = \s(\scrT_X)$ and $\C = \s(\scrT_Y)$.

We recall that a point $x$ is generic if there exists an invariant measure $\Phi_X(x)$ such that for every continuous bounded function $f: X \arr \bbR$, we have
\begin{eq} \label{eq:generic}
\frac{1}{n}\sum_{k=0}^{n-1} f \circ T^k(x) \underset{n \arr \infty}{\to} \int f d\Phi_X(x).
\end{eq}
Using Birkhoff's ergodic theorem, we know that on $\bfX$ and $\bfY$, almost every point is generic for an ergodic measure. Therefore, there are invariant measurable subsets $Y_0 \subset Y$ and $X_0 \subset \pi^{-1}Y_0 \subset X$ of full measure on which all points are generic for an ergodic measure. We can define the map $\Phi_X: X_0 \to \P(X)$ that sends each point to the ergodic measure for which it is generic and for all $x \in X_0$, $\Phi_X(x)(X_0) = 1$. Finally, define $\Phi_X$ on $X \backslash X_0$ as any measurable map, which implies that $\Phi_X: X \to \P(X)$ is measurable. We define $\Phi_Y:Y \to \P(Y)$ similarly.

Those maps are measurable and generate the respective invariant factor $\sigma$-algebras of $\bfX$ and $\bfY$. Indeed, let us show that $\sigma(\Phi_X) = \I_X$ mod $\mu$: it is clear that $\sigma(\Phi_X) \subset \I_X$ mod $\mu$, so we need to show the converse. This will follow from the equality, for any bounded measurable function $f$:
\begin{eq}\label{eq:Phi}
\int f d \Phi_X(x) = \lim_{n \arr \infty} \frac{1}{n} \sum_{k=0}^{n-1} f \circ T^k x = \bbE[f \, | \, \I_X](x), \; \mu \text{-almost surely}.
\end{eq}
We get \eqref{eq:Phi} by first showing it for continuous functions, and then extending it to all bounded measurable functions. Once that is established, take $f$ a $T$-invariant function and use \eqref{eq:Phi} to get
$$\int f d \Phi_X(x) = \bbE[f \, | \, \I_X](x) = f(x), \; \mu \text{-almost surely}.$$
This shows that $f$ is $\Phi_X$-measurable, therefore completing the proof that $\sigma(\Phi_X) = \I_X$ mod $\mu$.

Combining that with the relative ergodicity of $\A \arr \B$, we have
$$\sigma(\Phi_X) = \I_X = \pi^{-1}(\I_Y) = \pi^{-1}\sigma(\Phi_Y) = \sigma(\Phi_Y \circ \pi) \text{ mod } \mu.$$
Moreover, the laws of $\Phi_X$ and $\Phi_Y$, which we denote $\xi_X$ and $\xi_Y$ respectively, give the ergodic decompositions of $\mu$ and $\nu$. Therefore, there exists a measurable map $\Gamma: (\P(Y), \xi_Y) \to (\P(X), \xi_X)$ such that 
$$\Phi_X = \Gamma \circ \Phi_Y \circ \pi \; \mu \text{-almost surely}.$$
Moreover, it is easy to verify that $\Phi_Y \circ \pi = \pi_* \circ \Phi_X$ on $X_0$. Finally, it yields that 
$$\Gamma \circ \pi_* = \rm{Id}_{\P(X)} \; \xi_X \text{-almost surely}.$$
From that, we can use \cite[Corollary 15.2]{kechris} to deduce that there are two measurable sets $\Sigma_X \subset \P(X)$ and $\Sigma_Y \subset \P(Y)$ of full measure (for $\xi_X$ and $\xi_Y$ respectively) such that $\pi_*: \Sigma_X \to \Sigma_Y$ is a measurable bijection (with $\pi_*^{-1} = \Gamma$). Finally, set $X_1 := \Phi_X^{-1} (\Sigma_X) \cap X_0$ and $Y_1 := \Phi_Y^{-1}(\Sigma_Y) \cap Y_0$ and define $\bfX_1$ (resp. $\bfY_1$) as the restriction of $\bfX$ to $X_1$ (resp. $\bfY$ to $Y_1$). This is well defined because $X_1$ and $Y_1$ are invariant measurable sets that verify $\pi(X_1) \subset Y_1$. We know that $(X_1, \A_1) := (X_1, \A \cap X_1)$ and $(Y_1, \C_1) := (Y_1, \C \cap Y_1)$ are still standard Borel spaces because they are the restriction of a standard Borel space to a measurable subset. 

Define $\Phi_{X_1}: X_1 \to \P(X_1) \cup \{0\}$ by restricting $\Phi_X(x)$ to $X_1$. This is well defined because, since $\Phi_{X}(x)$ is ergodic and $X_1$ is invariant, the restriction of $\Phi_X(x)$ to $X_1$ is either a probability measure or the null measure.

The extension $\bfX_1 \overset{\pi_1}{\to} \bfY_1$ gives us the desired model. Let $\l$ be a $T$-invariant probability measure on $X_1$ such that $\pi_*\l = \nu_1$ and set $\chi := (\Phi_{X_1})_*\l$. First, consider $\tilde\l(\cdot) := \l(\cdot \cap X_1)$ and integrate \eqref{eq:generic} over $X$: for any bounded $\scrT_X$-continuous $f$ 
$$\int_X f \tilde\l = \int_{X_1} f d\l = \int_{X_1} \int f d\Phi_X(x) \, d\l(x),$$
so
$$\tilde\l = \int_{X_1} \Phi_X(x) d\l(x).$$
So $\Phi_X(x)(X_1) = 1$ $\l$-almost surely and
$$\l = \int_{X_1} \Phi_{X_1}(x) d\l(x) = \int \rho \; d\chi(\rho).$$
By applying $\pi_*$, we get
$$\nu_1 = \int \pi_*\rho \; d\chi(\rho),$$
which yields $(\pi_*)_*\chi = \xi_{Y_1}$, by uniqueness of the ergodic decomposition of $\nu_1$. Then, since $\pi_*$ is a bijection, we get
$$\chi = (\pi_*^{-1})_*\xi_{Y_1} = \xi_{X_1},$$
and finally 
$$\l = \int \rho \; d \chi(\rho) = \int \rho \; d\xi_{X_1} = \mu_1.$$
\end{proof}

\subsection{Compact extensions: a criterion for confinement} \label{sect:compact}

In this section, we prove the confinement criterion presented in Theorem \ref{thm:confinement_compact} and give applications of this criterion.

\subsubsection{A criterion for confinement of compact extensions}

We recall our criterion. For a compact extension $\bfX := \bfY \ltimes_\phi G \to \bfY$, we want to show that the following are equivalent:
\begin{enumerate} [label= (\roman*)]
\item The extension $\bfX \overset{\pi}{\to} \bfY$ is confined;
\item The product extension $\bfX \otimes \bfX \overset{\pi \times \pi}{\to} \bfY \otimes \bfY$ is relatively ergodic;
\item For $\bbP$-almost every $\o$ the measure $\rho_\o \otimes m_G \otimes m_G$ is ergodic under $(S \times S)_{\phi \times \phi}$.
\end{enumerate}
As we see in the proof below, the implications (i) $\Rightarrow$ (ii) and (ii) $\Rightarrow$ (iii) are general results for which the extension does not need to be compact. However, the implication (iii) $\Rightarrow$ (i) uses the compactness of the extension, because the main ingredient in our proof is Furstenberg's well known relative unique ergodicity result:
\begin{lem} [Furstenberg \cite{Furstenberg_disjointness}] \label{lem:rel_unique_ergo}
Let $\bfX := (Y \times G, \nu \otimes m_G, S_\phi)$ be the system introduced in Definition \ref{def:compact_ext} and $\pi: (y, g) \mapsto y$. If $\bfX$ is ergodic, then, for any $S_\phi$-invariant measure $\l$ on $Y \times G$ which verifies $\pi_*\l = \nu$, we have $\l = \nu \otimes m_G$. 
\end{lem}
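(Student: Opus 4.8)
The plan is to exploit the symmetry of the skew product $S_\phi$ coming from the group structure of $G$. The crucial observation is that for every $h \in G$ the left translation $L_h \colon (y,g) \mapsto (y, hg)$ commutes with $S_\phi$: indeed $L_h S_\phi(y,g) = (Sy,\, hg\,\phi(y)) = S_\phi L_h(y,g)$, which is simply the associativity of multiplication in $G$ combined with the fact that the cocycle $\phi$ acts on the right. Consequently each pushforward $(L_h)_*$ maps $S_\phi$-invariant measures to $S_\phi$-invariant measures, and since $L_h$ fixes the $Y$-coordinate it preserves the first marginal.

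First I would average the given measure over the group. Starting from an $S_\phi$-invariant $\l$ with $\pi_*\l = \nu$, set
$$\bar\l := \int_G (L_h)_*\l \; d m_G(h).$$
By the previous paragraph $\bar\l$ is again $S_\phi$-invariant with $\pi_*\bar\l = \nu$, and the left-invariance of the Haar measure $m_G$ makes $\bar\l$ invariant under every $L_k$ (after the change of variables $h \mapsto k^{-1}h$). Disintegrating $\bar\l = \int_Y \delta_y \otimes \bar\l_y \, d\nu(y)$ over $\pi$, the $L$-invariance forces each conditional measure $\bar\l_y$ to be a left-invariant probability measure on $G$, hence $\bar\l_y = m_G$ for $\nu$-almost every $y$. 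Therefore $\bar\l = \nu \otimes m_G$, and this step uses nothing about ergodicity.

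It remains to upgrade this to $\l = \nu \otimes m_G$, which is the only place the ergodicity hypothesis is used. We have exhibited the measure $\nu \otimes m_G$ as the barycenter $\int_G (L_h)_*\l \, dm_G(h)$ of a family of $S_\phi$-invariant measures. Since $\bfX$ is ergodic, $\nu\otimes m_G$ is an extreme point of the simplex of $S_\phi$-invariant measures; substituting the ergodic decomposition of each $(L_h)_*\l$ into the integral and invoking uniqueness of the ergodic decomposition of $\nu \otimes m_G$ (which is $\delta_{\nu\otimes m_G}$), one concludes that $(L_h)_*\l = \nu \otimes m_G$ for $m_G$-almost every $h$. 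Picking any such $h$ and applying $(L_{h^{-1}})_*$, together with the $L$-invariance of $\nu\otimes m_G$ established above, yields $\l = \nu \otimes m_G$.

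The step I expect to be the main obstacle is this last extremality argument: one must justify carefully that a barycenter equal to an ergodic (hence extreme) measure forces the representing family to be almost surely constant, for which I would lean on the uniqueness of the ergodic decomposition rather than an abstract Choquet statement. An alternative route that avoids this point entirely would be to run a Peter--Weyl / Fourier argument on the conditional measures $\l_y$: writing the invariance as $\widehat{\l_{Sy}}(\sigma) = \widehat{\l_y}(\sigma)\,\sigma(\phi(y))$ for each nontrivial irreducible representation $\sigma$ and feeding the resulting coboundary-type relation into the ergodicity of $S_\phi$ to force $\widehat{\l_y}(\sigma) = 0$, hence $\l_y = m_G$; I would keep the averaging proof as the primary one since it is shorter and coordinate-free.
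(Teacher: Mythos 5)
Your proof is correct, but it takes a genuinely different route from the one the paper relies on. The proof the paper points to (Glasner, and the author's own sketch) disintegrates $\l = \int \delta_y \otimes \l_y\,d\nu(y)$, extracts from $S_\phi$-invariance the cocycle relation $\l_{Sy} = (R_{\phi(y)})_*\l_y$ for the \emph{right} translations $R_g\colon g'\mapsto g'g$, and observes that the map $F\colon (y,g)\mapsto (R_{g^{-1}})_*\l_y$ is then an $S_\phi$-invariant function on $Y\times G$; ergodicity is used in the form ``invariant functions are a.s.\ constant,'' and the constant value is forced to be $m_G$ by its right-invariance under $m_G$-a.e.\ $g$. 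You instead exploit the commuting \emph{left} translations, average $\l$ over them, and use ergodicity in the form ``an ergodic measure is an extreme point of the simplex of invariant measures.'' Both arguments are sound. The extremality step you single out is indeed the only delicate point, and your proposed fix is the right one: if $\nu\otimes m_G = \int_G (L_h)_*\l\,dm_G(h)$ with each $(L_h)_*\l$ invariant, composing with the ergodic decompositions of the $(L_h)_*\l$ and invoking uniqueness of the ergodic decomposition of the ergodic measure $\nu\otimes m_G$ gives $(L_h)_*\l = \nu\otimes m_G$ for a.e.\ $h$ (equivalently, one can run Birkhoff's theorem for a countable generating algebra of sets). What your route buys is a clean separation of the group-theoretic input (the averaging step, which uses no dynamics --- note it can be shortened further, since $\bar\l_y = m_G * \l_y = m_G$ immediately) from the dynamical input (extremality); what it costs is that it leans on the ergodic decomposition theorem, whereas the paper's invariant-function argument needs only that $L^0$-invariant functions are constant. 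Your Fourier alternative is essentially Anzai's classical argument for abelian $G$ and also works, via matrix coefficients, for general compact $G$.
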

See \cite[Theorem 3.30]{glasner} for a proof of the lemma.

\begin{proof}[Proof of Theorem \ref{thm:confinement_compact}]
\begin{description}[leftmargin=*]
\item[(i) $\Rightarrow$ (ii).] Assume that $\bfX \overset{\pi}{\to} \bfY$ is a confined compact extension. From the definition of confined extensions, we know that the product extension $\bfX \otimes \bfX \overset{\pi \times \pi}{\to} \bfY \otimes \bfY$ is also confined. So it is relatively ergodic (see Lemma \ref{lem:confined_rel_ergodic}).
\item[(ii) $\Rightarrow$ (iii).] It follows from the lemma, which is true for any extension given by a Rokhlin cocycle:
\begin{lem} \label{lem:compact_RUE}
Let $\bfY := (Y, \B, \nu, S)$ be a dynamical system, $(Z, \rho)$ a standard Borel space and $R_\bullet: Y \to \mathrm{Aut}(Z, \rho)$ a measurable cocycle. Consider the Rokhlin extension $\bfX$ defined on $(X, \mu) := (Y \times Z, \nu \otimes \rho)$ by the transformation
$$T: (y, z) \mapsto (Sy, R_y(z)).$$
Consider
$$\nu = \int \t\nu \, d\chi_\nu(\t\nu),$$
the ergodic decomposition of $\nu$. If $\bfX \overset{\pi}{\to} \bfY$ is relatively ergodic, then the ergodic decomposition of $\nu \otimes \rho$ is
$$\nu \otimes \rho = \int \t\nu \otimes \rho \, d\chi_\nu(\t\nu).$$
\end{lem}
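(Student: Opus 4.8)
The plan is to recognize that the claimed formula is nothing but the ergodic decomposition of $\mu := \nu \otimes \rho$ obtained by disintegrating over the invariant factor of $\bfX$, and that relative ergodicity is exactly what pins down this invariant factor. Throughout I write $\I_X$ and $\I_Y$ for the invariant $\s$-algebras of $\bfX$ and $\bfY$, and I use the standard fact that the ergodic decomposition of an invariant measure coincides with its disintegration over the invariant $\s$-algebra, whose conditional measures are ergodic.

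First I would check the two elementary points: that each measure $\tilde\nu \otimes \rho$ is $T$-invariant, and that they integrate to $\mu$. For invariance, since $T(y,z) = (Sy, R_y(z))$ is a skew product over $S$ and each $R_y$ lies in $\aut(Z,\rho)$, pushing $\tilde\nu \otimes \rho$ forward by $T$ pushes the base by $S$ (leaving the $S$-ergodic component $\tilde\nu$ fixed) and each fiber by $R_y$ (leaving $\rho$ fixed), so $T_*(\tilde\nu \otimes \rho) = \tilde\nu \otimes \rho$. That the integral equals $\mu$ is immediate, since forming the product with $\rho$ commutes with integration against $\chi_\nu$: one has $\int (\tilde\nu \otimes \rho)\, d\chi_\nu(\tilde\nu) = \left(\int \tilde\nu\, d\chi_\nu(\tilde\nu)\right) \otimes \rho = \nu \otimes \rho$. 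Thus the right-hand side is at least a legitimate decomposition of $\mu$ into $T$-invariant measures.

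The heart of the argument is to show $\I_X = \pi^{-1}(\I_Y)$ mod $\mu$. The inclusion $\pi^{-1}(\I_Y) \subset \I_X$ always holds, because the intertwining $\pi \circ T = S \circ \pi$ turns an $S$-invariant set $A$ into the $T$-invariant set $\pi^{-1}(A)$. For the reverse inclusion I would invoke the hypothesis: relative ergodicity of $\bfX \overset{\pi}{\to} \bfY$ means precisely $\I_X \subset \pi^{-1}(\B)$ mod $\mu$. Now a $\pi^{-1}(\B)$-measurable set is of the form $\pi^{-1}(A)$ with $A \in \B$, and it is $T$-invariant iff $\pi^{-1}(S^{-1}A) = \pi^{-1}(A)$ mod $\mu$, i.e. iff $A$ is $S$-invariant mod $\nu$ (using $\pi_*\mu = \nu$); hence $\I_X \cap \pi^{-1}(\B) = \pi^{-1}(\I_Y)$. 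Combining this with $\I_X \subset \pi^{-1}(\B)$ yields the desired equality $\I_X = \pi^{-1}(\I_Y)$.

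Finally I would identify the disintegration of $\mu = \nu \otimes \rho$ over $\pi^{-1}(\I_Y)$. Since under the product measure $\mu$ the fiber coordinate $z$ is independent of the whole of $\B$, hence of its sub-$\s$-algebra $\I_Y$, the conditional measure of $\mu$ given $\pi^{-1}(\I_Y)$ at a point whose $\I_Y$-component is $\tilde\nu$ is exactly $\tilde\nu \otimes \rho$; the law of the assignment $y \mapsto \tilde\nu$ is by definition $\chi_\nu$. By the ergodic decomposition theorem applied to $\I_X = \pi^{-1}(\I_Y)$, these conditional measures are $T$-ergodic, so $\int (\tilde\nu \otimes \rho)\, d\chi_\nu(\tilde\nu)$ is genuinely the ergodic decomposition of $\mu$, which is the claim. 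The only delicate step is the identification $\I_X = \pi^{-1}(\I_Y)$, where relative ergodicity enters; the rest is a routine disintegration computation resting on the fact that each $R_y$ preserves $\rho$.
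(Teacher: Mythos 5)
Your proof is correct and follows essentially the same route as the paper's: identify the invariant $\s$-algebra of $\bfX$ with $\pi^{-1}(\I_Y)$ via relative ergodicity, then use independence of the $Z$-coordinate from $\B$ to compute the disintegration of $\nu \otimes \rho$ over that $\s$-algebra and invoke the fact that ergodic decomposition is disintegration over the invariant $\s$-algebra. The only difference is that you spell out the step $\I_X \cap \pi^{-1}(\B) = \pi^{-1}(\I_Y)$, which the paper asserts without detail; that is a welcome addition rather than a divergence.
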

\begin{proof}
Let $\I_T$ be the $\s$-algebra of $T$-invariant sets on $Y \times Z$ and $\I_S$ the $\s$-algebra of $S$-invariant sets on $Y$. By assumption, $\pi^{-1}\I_S = \I_T$, so if we take $f: Y \to \bbR$ and $h: Z \to\bbR$ bounded measurable functions, the independence of $Y$ and $Z$ gives
$$\bbE_{\nu \otimes \rho} [f(y) h(z) \, | \, \I_T] = \bbE_\nu[f(y) \, | \, \I_S] \cdot \bbE_\rho[h(z)].$$
Since the ergodic decomposition is obtained by decomposing over the factor of invariant sets, this equality implies that
$$\int \t \nu \otimes \rho \, d\chi_\nu(\t\nu),$$
gives the ergodic decomposition of $\nu \otimes \rho$.

\end{proof}
\item[(iii) $\Rightarrow$ (i).] Assume that the condition (iii) holds. Let $\l$ be a self-joining of $\bfX$ for which the copies of $\bfY$ are independent, i.e. $\l$ projects to $\nu \otimes \nu$ on $Y \times Y$. We consider the ergodic decomposition of $\l$ 
$$\l = \int \l_\o \, d \, \bbP(\o).$$
Then, since $\l$ projects onto $\nu \otimes \nu$, we know that $\rho_\o := (\pi\times \pi)_*\l_\o$ gives an ergodic decomposition of $\nu \otimes \nu$. Therefore, by the uniqueness of the ergodic decomposition and our hypothesis, we get that, for $\bbP$-almost every $\o$, the measure $\rho_\o \otimes m_{G \times G} = \rho_ \o \otimes m_G \otimes m_G$ is ergodic. Moreover, we can see $(X \times X, \rho_\o \otimes m_{G \times G}, T \times T)$ as a compact extension of $(Y \times Y, \rho_\o, S \times S)$ via the co-cycle $\psi: (y, y') \mapsto (\phi(y), \phi(y'))$ taking values in the compact group $G \times G$. Since $\rho_\o \otimes m_{G \times G}$ is ergodic, Lemma \ref{lem:rel_unique_ergo} tells us that $\l_\o = \rho_\o \otimes m_{G \times G}$. Finally:
$$\l = \int \rho_\o \, d \, \bbP(\o) \otimes m_{G \times G} = \nu \otimes \nu \otimes m_{G} \otimes m_G = \mu \otimes \mu.$$

\end{description}
\end{proof}

\subsubsection{A non-confined ergodic compact extension}
 \label{exe:non-confined_compact_ext}
We give here a simple example of non-confined ergodic compact extension: take $\bfY := (Y, \B, \nu, S)$ a weakly mixing dynamical system, $\a$ an irrationnal number and the constant cocycle 
$$\begin{array}{lccc}
\phi: & Y  &\to & \bbT\\
& y & \mapsto & \a 
\end{array}$$
Then the associated compact extension $(Y \times \bbT, \nu \otimes m_\bbT, S_\phi)$ defined by 
$$S_\phi : (y, z) \mapsto (Sy, z + \a)$$
is ergodic. Also, because the cocycle is constant, the extension is of product type, and therefore it is not confined.

\subsubsection{A non-weakly mixing confined compact extension}
Let us now turn our attention to an example illustrating our confinement criterion in the non-weakly mixing case. We consider the system $\bfX_\a$ on the two dimensional torus given by the following Anzai product:
$$T_\a : (x, y) \mapsto (x + \alpha, y + x),$$
as an extension of the system $\bfY_\a$ given by the rotation 
$$S_\a: x \mapsto x + \alpha,$$
with $\alpha \in \bbR$. We equip both systems with the Lebesgue measure.
\begin{prop} \label{prop:non-WM_compact_ext}
The system $\bfX_\a$ is not weakly mixing but the extension $\bfX_\a \overset{\pi}{\to} \bfY_\a$ is compact and confined.
\end{prop}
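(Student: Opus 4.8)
The plan is to treat the three assertions in turn, establishing confinement via condition (iii) of Theorem~\ref{thm:confinement_compact}. Throughout I take $\a$ irrational, so that the base rotation $S_\a$ is ergodic.

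\textbf{Non weak mixing and compactness.} The function $f(x,y) := e^{2\pi i x}$ satisfies $f \circ T_\a = e^{2\pi i \a}\, f$, so it is a non-constant eigenfunction of $\bfX_\a$ with eigenvalue $e^{2\pi i \a} \neq 1$; equivalently, the irrational rotation $\bfY_\a$ is a non-weakly-mixing factor of $\bfX_\a$, and weak mixing passes to factors, so $\bfX_\a$ is not weakly mixing. For compactness, write the circle $\bbT$ additively as a compact group with Haar measure $m_\bbT$ and set $\phi(x) := x$; then $T_\a(x,y) = (x+\a,\, y+x)$ is exactly the skew product $(S_\a)_\phi$ of Definition~\ref{def:compact_ext} with $G = \bbT$, so $\bfX_\a = \bfY_\a \ltimes_\phi \bbT$ is a compact extension.

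\textbf{Reduction for confinement.} To apply Theorem~\ref{thm:confinement_compact}(iii), I first describe the ergodic decomposition of $m_\bbT \otimes m_\bbT$ under $S_\a \times S_\a \colon (x_1,x_2) \mapsto (x_1+\a,\, x_2+\a)$. Since $x_1 - x_2$ is a complete invariant, the ergodic components are the uniform measures $\rho_c$ on the circles $\{x_2 = x_1 + c\}$, indexed by $c \in \bbT$, and the role of $\bbP$ is played by $m_\bbT$ on the parameter $c$. Fixing $c$ and parametrizing $\{x_2 = x_1 + c\}$ by $x := x_1$, the system $(\rho_c \otimes m_\bbT \otimes m_\bbT,\ (S_\a \times S_\a)_{\phi \times \phi})$ becomes the transformation
$$U_c \colon (x, g_1, g_2) \longmapsto (x+\a,\ g_1 + x,\ g_2 + x + c)$$
on $\bbT^3$ with Lebesgue measure, which is the compact group extension of $S_\a$ by $G = \bbT^2$ associated with the cocycle $\Psi(x) := (x,\, x+c)$. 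By the classical ergodicity criterion for group extensions of an ergodic rotation, $U_c$ is ergodic if and only if for every nontrivial character $\chi_{m,n}(h_1,h_2) = e^{2\pi i (m h_1 + n h_2)}$ of $\bbT^2$, the function $\chi_{m,n} \circ \Psi \colon x \mapsto e^{2\pi i((m+n)x + nc)}$ is not a multiplicative coboundary, i.e. is not of the form $\xi(x+\a)/\xi(x)$ for a measurable $\xi$ with $|\xi| = 1$.

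\textbf{Coboundary analysis and conclusion.} If $m+n \neq 0$, comparing Fourier coefficients in $\lambda e^{2\pi i kx}\,\xi(x) = \xi(x+\a)$ with $k := m+n$ gives the recursion $\hat\xi(j) = \lambda\, e^{-2\pi i j\a}\,\hat\xi(j-k)$, so $|\hat\xi(j)|$ is constant along each progression $j \equiv j_0 \pmod{k}$; square-summability then forces $\xi \equiv 0$, so no such exponential is a coboundary. If $m+n = 0$ with $(m,n) \neq (0,0)$, i.e. $n \neq 0$, then $\chi_{m,n}\circ\Psi$ reduces to the constant $e^{2\pi i nc}$, which is a coboundary exactly when it is an eigenvalue of $S_\a$, that is, when $nc \in \bbZ + \a\bbZ \pmod 1$. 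The set of $c$ for which this occurs for some $n \neq 0$ is a countable union of cosets of the countable group $\bbZ + \a\bbZ$, hence Lebesgue-null. Therefore $U_c$ is ergodic for $m_\bbT$-almost every $c$, condition (iii) of Theorem~\ref{thm:confinement_compact} holds, and the extension $\bfX_\a \overset{\pi}{\to} \bfY_\a$ is confined.

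The only delicate point is this coboundary analysis, and specifically the constant case $m+n=0$: one must verify that simultaneously excluding, for every $n \neq 0$, the values of $c$ with $e^{2\pi i nc}$ in the point spectrum $\{e^{2\pi i j\a} : j \in \bbZ\}$ of $S_\a$ still leaves almost every $c$. This is precisely the countability of $\bbZ + \a\bbZ$, and it is what makes this non-weakly-mixing compact extension confined despite the product rotation on the base being far from ergodic.
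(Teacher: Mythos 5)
Your proof is correct and follows essentially the same route as the paper: decompose $\nu\otimes\nu$ into the circles $\{x_2=x_1+c\}$, reduce to the $\bbT^2$-extension of $S_\a$ by the cocycle $x\mapsto(x,x+c)$, and rule out coboundaries for all but countably many $c$. The only difference is that you spell out the Fourier/character analysis (and the eigenfunction witnessing non-weak-mixing) in more detail than the paper, which simply cites the standard ergodicity criterion.
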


\begin{proof}[Proof of Proposition \ref{prop:non-WM_compact_ext} using Theorem \ref{thm:confinement_compact}]
We know that $\bfX_\a$ is not weakly mixing because $\bfY_\a$ is not. The extension $\bfX_\a \overset{\pi}{\to} \bfY_\a$ is clearly compact. Let us show that it is confined.

Denote by $\nu$ the Lebesgue measure on the torus $\bbT$ and by $\mu := \nu \otimes \nu$ the Lebesgue measure on the $2$-dimensional torus, $\bbT^2$. For $\o \in \bbT$, define $\mu_\o := \int \delta_x \otimes \delta_{x +\o} d\nu(x)$. The measures $(\mu_\o)_{\o \in \bbT}$ give an ergodic decomposition of $\nu\otimes \nu$:
$$\nu \otimes \nu = \int \mu_\o d\nu(\o).$$
In light of Theorem \ref{thm:confinement_compact}, we need to show that, for $\nu$-almost every $\o$, the system $(\bbT^4, \mu_\o \otimes \nu \otimes \nu, T_\a \times T_\a)$ is ergodic. This is isomorphic to the system on $(\bbT^3, \nu \otimes \nu \otimes \nu)$ given by 
\begin{eq} \label{eq:ergodic_comp}
(x, y_1, y_2) \mapsto (x + \a, y_1 + x, y_2 + x + \o),
\end{eq}
which is a compact extension of $\bfY_\a$ via the cocycle 
$$\phi: x \mapsto (x, x + \o).$$
It is known that this yields an ergodic system if an only if there exist $(n_1, n_2) \in \bbZ\backslash \{(0, 0)\}$ and a measurable map $f: \bbT \to \bbT$ such that 
$$e^{2i \pi n_1x} e^{2i \pi n_2 (x + \o)} = f(x + \a) / f(x) \; \text{ for $\nu$-almost all $x$}.$$
By considering the Fourier series of such a function $f$, we see that this is only possible if there is $k \in \bbZ$ such that $n_2 \o - k \a \in \bbZ$.

Therefore, this means that there is a countable number of $\o \in \bbT$ such that \eqref{eq:ergodic_comp} yields a non-ergodic system. Since $\nu$ is non-atomic, we conclude that for $\nu$-almost every $\o$, the system $(\bbT^4, \mu_\o \otimes \nu \otimes \nu, T_\a \times T_\a)$ is ergodic, and, from Theorem \ref{thm:confinement_compact}, we know that $\bfX_\a \overset{\pi}{\to} \bfY_\a$ is confined.
\end{proof}

Let us add the following result regarding this extension:
\begin{defi} \label{def:rigid}
A dynamical system $\bfX := (X, \A, \mu, T)$ is rigid if there exists a sequence $(n_k)_{k \geq 0}$ that goes to $\infty$ such that, for every measurable set $A \subset X$, we have
$$\lim_{k \rightarrow \infty} \mu(T^{n_k}A \, \Delta \, A) =0.$$
For such a sequence, we say that $\bfX$ is $(n_k)$-rigid.
\end{defi}
\begin{prop} \label{prop:non_rigid_confined_extension}
The factor $\bfY_\a$ is rigid but the extension $\bfX_\a$ is not.
\end{prop}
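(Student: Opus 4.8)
The plan is to treat the two assertions separately, exploiting the classical fact that \emph{rigidity is a spectral property}: the system $\bfX_\a$ is $(n_k)$-rigid if and only if its Koopman operator satisfies $U_{T_\a}^{n_k} \to \mathrm{Id}$ in the strong operator topology of $L^2$, which in particular forces $\langle U_{T_\a}^{n_k} f, f\rangle \to \|f\|^2$ for every $f$. Indeed, Definition \ref{def:rigid} asks that $\mu(T^{n_k}A \,\Delta\, A) = \|U_{T_\a}^{n_k}\mathbbm{1}_A - \mathbbm{1}_A\|_{L^1} \to 0$ for all $A$, and for indicator functions $L^1$- and $L^2$-convergence coincide while indicators are dense, so the set-theoretic and operator formulations agree. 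Both halves of the proposition then reduce to short computations on the Fourier basis.

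For the rigidity of $\bfY_\a$: the set $\{n\a \bmod 1 : n \in \bbN\}$ accumulates at $0$ (trivially when $\a \in \bbQ$, and by density of the orbit when $\a \notin \bbQ$), so I would pick $n_k \to \infty$ with $n_k\a \to 0 \bmod 1$. The operators $S_\a^{n_k}$ are translations by $n_k\a$, and since the regular representation $t \mapsto (f \mapsto f(\cdot - t))$ is strongly continuous on $L^2(\bbT)$, this gives $\nu(S_\a^{n_k} A \,\Delta\, A) \to 0$ for every measurable $A$. Hence $\bfY_\a$ is $(n_k)$-rigid.

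For the non-rigidity of $\bfX_\a$, the key is to exhibit a single function whose spectral measure is Lebesgue measure, which immediately rules out rigidity. Writing $e_{m,n}(x,y) := e^{2i\pi(mx+ny)}$ for the Fourier basis of $L^2(\bbT^2)$, a direct computation gives $U_{T_\a} e_{m,n} = e^{2i\pi m\a}\, e_{m+n,\,n}$. Taking $f := e_{0,1}$ (the vertical character $e^{2i\pi y}$) and iterating along the fiber $n=1$, one finds $U_{T_\a}^k f = c_k\, e_{k,1}$ with $|c_k| = 1$ (explicitly $c_k = e^{i\pi k(k-1)\a}$, though only unimodularity matters). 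Since $e_{k,1} \perp e_{0,1}$ for $k \neq 0$, this yields $\langle U_{T_\a}^k f, f\rangle = 0$ for all $k \neq 0$, i.e. the spectral measure of $f$ is the normalized Lebesgue measure on $\bbT$.

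It then remains only to conclude: for any sequence $n_k \to \infty$ we have $\langle U_{T_\a}^{n_k} f, f\rangle = 0$, while $\|f\|^2 = 1$, so $U_{T_\a}^{n_k} f$ cannot converge to $f$ and $\bfX_\a$ is not rigid. I expect the only point requiring care to be the equivalence between Definition \ref{def:rigid} and the operator statement, which is standard and was dispatched in the first paragraph; there is no serious obstacle. The genuine content of the argument is the one-line computation showing that the character $e^{2i\pi y}$ carries Lebesgue spectrum, reflecting the Lebesgue component in the reduced maximal spectral type of the Anzai skew product.
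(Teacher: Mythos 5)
Your proof is correct. The paper itself does not give an argument for the non-rigidity of $\bfX_\a$: it simply invokes Theorem 6 of Anzai's paper, which asserts that the Anzai skew product has a Lebesgue component in its spectrum on the orthocomplement of $L^2(\bfY_\a)$ (the rigidity of the rotation factor is taken as known). What you have done is unpack that citation: the computation $U_{T_\a} e_{m,n} = e^{2i\pi m\a} e_{m+n,n}$, specialised to the orbit of $e_{0,1}$, is precisely the mechanism behind Anzai's theorem, and exhibiting one function whose spectral measure is Lebesgue is all that is needed to kill rigidity, since $\langle U_{T_\a}^{n_k} f, f\rangle = 0$ can never tend to $\|f\|^2 = 1$. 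Your reduction of Definition \ref{def:rigid} to the strong-operator statement via indicators is the standard one and is sound. The only thing your version buys beyond the paper's is self-containedness (and it works for every real $\a$, not just irrational ones); the only thing it costs is half a page. One cosmetic remark: you only use the easy direction of the equivalence ``rigid $\Leftrightarrow$ $U^{n_k}\to\mathrm{Id}$ strongly,'' namely that rigidity forces $\langle U^{n_k}f,f\rangle\to\|f\|^2$ for each fixed $f$, so the density argument in your first paragraph could be dropped entirely.
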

The fact that $\bfX_\a$ is not rigid follows easily from \cite[Theorem 6]{Anzai}.

\section{$T, T^{-1}$ transformations} \label{sect:TT-1}

Let $\bfY := (Y, \B, \nu, S)$ be a Bernoulli shift with $Y = \{-1, 1\}^\bbZ$, $\nu = \frac{1}{2}(\delta_{-1} + \delta_{1})^{\otimes \bbZ}$ and $S$ the shift on $Y$. 
Let $\bfX := (X, \A, \mu, T)$ be a dynamical system. We introduce the system $\bfY \ltimes \bfX$ defined on $(Y \times X, \nu \otimes \mu)$ by the transformation 
$$\begin{array}{cccc}
S \ltimes T: & Y \times X & \to & Y \times X\\
& (y, x) & \mapsto & (Sy, T^{y(0)}x)
\end{array}.$$
We call this map a $T, T^{-1}$ transformation.

Most of our approach in Section \ref{sect:T, T-1_cocycle} is based on the arguments given in \cite{erg_rokhlin_cocycles}, with the necessary adjustments to apply them to $T, T^{-1}$ transformations and confined extensions. Then in Section \ref{sect:non-standard}, we use new arguments to finally get a non-standard extension.

\subsection{Confinement result for $T, T^{-1}$ transformations} \label{sect:T, T-1_cocycle}

In this section, we will prove that
\begin{thm} \label{thm:TT-1_confined}
If $T^2$ acts ergodically on $(X, \A, \mu)$, then $\pi: \bfY \ltimes \bfX \to \bfY$ is a confined extension. 
\end{thm}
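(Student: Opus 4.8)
My plan is to verify the confinement of $\pi:\bfY\ltimes\bfX\to\bfY$ through property (ii) of Definition \ref{def:confined}. So I take an arbitrary system $\bfZ:=(Z,\C,\rho,R)$ together with a joining $\kappa$ of $W:=\bfY\ltimes\bfX$ and $\bfZ$ in which the factor $\B$ (the $Y$-coordinate) is independent of $\bfZ$, and I must show that the whole of $W$ is then independent of $\bfZ$. Independence of $\B$ and $\bfZ$ means that $\kappa$ projects to $\nu\otimes\rho$ on $Y\times Z$, so I disintegrate $\kappa$ over this factor and write the conditional law of the $X$-coordinate as a measurable family $F:Y\times Z\to\P(X)$. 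The $(S\ltimes T)\times R$-invariance of $\kappa$ translates into the equivariance relation $F(Sy,Rz)=(T^{y(0)})_*F(y,z)$, and the fact that the $W$-marginal of $\kappa$ equals $\nu\otimes\mu$ forces the constraint $\int_Z F(y,z)\,d\rho(z)=\mu$ for $\nu$-almost every $y$. Everything reduces to proving that $F(y,z)=\mu$ for $(\nu\otimes\rho)$-almost every $(y,z)$, since this says exactly that $X$ is independent of $(Y,Z)$, whence $\kappa$ factorizes and the extension is confined.

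Two consequences of the constraint and the equivariance are immediate. First, if $\mu(A)=0$ then $\int_Z F(y,z)(A)\,d\rho(z)=0$, so $F(y,z)\ll\mu$ for almost every $(y,z)$. Second, iterating the equivariance gives $F(S^ny,R^nz)=(T^{S_n(y)})_*F(y,z)$, where $S_n(y):=\sum_{j=0}^{n-1}y(j)$ is the simple random walk read off the Bernoulli coordinates. The core of the proof, adapted from \cite{erg_rokhlin_cocycles}, is to show that $F(y,z)$ is $T^2$-invariant. To see this I lift $F$ to the $\bbZ$-skew-product over the base $(Y\times Z,\nu\otimes\rho,S\times R)$ defined by the integer cocycle $S_\bullet$, setting $\tilde F(y,z,k):=(T^{-k})_*F(y,z)$; the equivariance is exactly the statement that $\tilde F$ is invariant under the skew-product map $(y,z,k)\mapsto(Sy,Rz,k+y(0))$. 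By the theory of essential values of cocycles, $\tilde F$ is then invariant under the fibre-translation $k\mapsto k+a$ for every essential value $a$ of $S_\bullet$, and for such $a$ this reads $(T^a)_*F=F$. It therefore suffices to check that $2$ is an essential value.

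Computing the essential values is where the Bernoulli structure of $\bfY$ enters decisively, and where I would follow \cite{erg_rokhlin_cocycles} closely, decomposing over the ergodic components of $S\times R$ if $\bfZ$ is not ergodic. Any obstruction to an integer $a$ being essential is a multiplicative eigenvalue obstruction, coming from a character $t\mapsto e^{ita}$ for which $e^{it\,y(0)}$ is a quasi-coboundary over the base. Because $y(0)=\pm1$, the value $t=\pi$ gives the \emph{constant} cocycle $e^{i\pi y(0)}\equiv-1$, which produces the parity eigenvalue $-1$ and removes the odd integers from the essential values precisely when $R$ itself carries the eigenvalue $-1$. For every other $t$, the cocycle $e^{it\,y(0)}$ generates an ergodic $\bbT$-extension of the Bernoulli shift, since its partial products $e^{itS_n(y)}$ equidistribute by recurrence and unboundedness of the walk together with the trivial eigenvalue spectrum of $S$; hence no further obstruction can occur. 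Consequently the group of essential values always contains $2\bbZ$, which yields $(T^2)_*F=F$ as desired.

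Once $T^2$-invariance of $F(y,z)$ is in hand, the hypothesis that $T^2$ acts ergodically finishes the argument cleanly: for almost every $(y,z)$ the measure $F(y,z)$ is a $T^2$-invariant probability measure absolutely continuous with respect to $\mu$, so its density $dF(y,z)/d\mu$ is a $T^2$-invariant function, hence constant equal to $1$ by ergodicity, i.e.\ $F(y,z)=\mu$. This is exactly where $T^2$-ergodicity, and not merely ergodicity of $T$, is indispensable: when $T^2$ fails to be ergodic the parity subgroup $2\bbZ$ of essential values is too small to pin down $F$, as the degenerate case $T^2=\mathrm{Id}$ (which makes the $T,T^{-1}$ map split off an independent factor) shows. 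I expect the main difficulty to be the essential-value computation of the third paragraph, namely transferring the arguments of \cite{erg_rokhlin_cocycles} to this measure-valued and possibly non-ergodic setting and checking rigorously that the only spectral obstruction over the Bernoulli base is the parity eigenvalue $-1$.
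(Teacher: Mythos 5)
Your overall strategy is sound in outline, but it attacks a strictly harder problem than the paper does, and the hard step is not actually carried out. The paper's proof uses the equivalence of Definition \ref{def:confined} (i) and (ii) (Proposition \ref{prop:alt_def_confined}) to reduce confinement to \emph{$2$-fold self-joinings}: there the disintegration lives over $Y\times Y$ with both factors Bernoulli, the relevant cocycle is the explicit $\bbZ^2$-valued simple random walk $(y_1(0),y_2(0))$ over $S\times S$, and Lemma \ref{lem:ergodicite_cocycle} identifies the ergodic components of the associated skew product concretely (via the isomorphism with the path space of the walk and conservativity plus ergodicity of the planar random walk). Constancy of the measure-valued invariant function on $Y^2\times Z_0$, compared at $k=(2,0)$ and $k=(0,0)$, gives $(T^2\times\mathrm{Id})_*\gamma_0=\gamma_0$, and disjointness of the ergodic system $(X,\mu,T^2)$ from the identity finishes. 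You instead verify property (ii) directly against an \emph{arbitrary} system $\bfZ$, so your cocycle $y\mapsto y(0)$ sits over the base $S\times R$ with $R$ arbitrary and possibly non-ergodic. The claim that $2$ is always an essential value over such a base is exactly the delicate content of the Lema\'nczyk--Lesigne stability machinery, and your justification is not correct as written: for $t$ a rational multiple of $\pi$ other than $0$ and $\pi$, the cocycle $e^{ity(0)}$ takes values in a finite subgroup of $\bbT$ and certainly does not generate an ergodic $\bbT$-extension of $S$; and in any case the relevant obstruction is whether $e^{ity(0)}$ is a coboundary \emph{over $S\times R_\omega$} (where eigenfunctions of $R_\omega$ can combine with functions of $y$), not whether the $\bbT$-extension of $S$ alone is ergodic. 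Showing that the only such obstruction is the eigenvalue $-1$ of $R_\omega$ requires a genuine argument which is missing; this is the main gap.

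There is also a local error in the last step: from $\int_Z F(y,z)\,d\rho(z)=\mu$ you cannot deduce $F(y,z)\ll\mu$ almost everywhere --- the exceptional set of $(y,z)$ depends on the null set $A$, and the implication fails in general (the decomposition $\mu=\int\delta_x\,d\mu(x)$ is a counterexample). Fortunately you do not need absolute continuity: once each $F(y,z)$ is known to be a $T^2$-invariant probability measure, the identity $\mu=\int F(y,z)\,d(\nu\otimes\rho)$ together with the extremality of the ergodic measure $\mu$ in the simplex of $T^2$-invariant measures forces $F(y,z)=\mu$ almost everywhere (equivalently, argue as the paper does via disjointness from the identity). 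If you want to salvage your approach, the cleanest repair is to invoke Proposition \ref{prop:alt_def_confined} first and take $\bfZ$ to be a second copy of $\bfY\ltimes\bfX$ with the two copies of $\bfY$ independent; the base then becomes $S\times S$, the essential-value computation reduces to the concrete planar random walk treated in Lemma \ref{lem:ergodicite_cocycle}, and the remainder of your argument goes through.
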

As a consequence, we will get 
\begin{cor} \label{cor:TT-1_relWM}
If $T$ is weakly mixing, then $\pi: \bfY \ltimes \bfX \to \bfY$ is confined but not compact.
\end{cor}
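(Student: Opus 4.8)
The plan is to treat the two assertions separately. For confinement, I would first invoke the classical fact that weak mixing of $T$ implies that $T^n$ is weakly mixing, hence ergodic, for every nonzero $n$; in particular $T^2$ acts ergodically on $(X,\A,\mu)$. Theorem \ref{thm:TT-1_confined} then immediately yields that $\pi:\bfY\ltimes\bfX\to\bfY$ is confined. This part is purely formal.

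The substantial part is to show that the extension is not compact. Write $\bfW := \bfY\ltimes\bfX$. My strategy is to prove that the relatively independent self-joining $\bfW\otimes_\bfY\bfW$ is relatively ergodic over $\bfY$, and then to observe that a nontrivial compact extension can never have this property. First I would identify $\bfW\otimes_\bfY\bfW$ explicitly: since $\pi$ is the projection onto $Y$, the conditional measures of $\nu\otimes\mu$ over $\bfY$ are the $\delta_y\otimes\mu$, so the relatively independent product is the system on $(Y\times X\times X,\,\nu\otimes\mu\otimes\mu)$ given by $(y,x_1,x_2)\mapsto(Sy,T^{y(0)}x_1,T^{y(0)}x_2)$. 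This is exactly the $T,T^{-1}$ transformation $\bfY\ltimes(\bfX\otimes\bfX)$ associated to the fiber transformation $T':=T\times T$ on $X\times X$.

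Now I would apply Theorem \ref{thm:TT-1_confined} a second time, to this new extension. Since $T$ is weakly mixing, $T^2$ is weakly mixing, hence $(T')^2=T^2\times T^2$ is weakly mixing and in particular ergodic. Therefore $\bfY\ltimes(\bfX\otimes\bfX)\to\bfY$ is confined, and by Lemma \ref{lem:confined_rel_ergodic} it is relatively ergodic; that is, $\bfW\otimes_\bfY\bfW\to\bfY$ is relatively ergodic. To close the argument, assume for contradiction that $\bfW\to\bfY$ were compact, so that, up to isomorphism, $\bfW=\bfY\ltimes_\phi G$ in the notation of Definition \ref{def:compact_ext}, with $G$ nontrivial (this is where I use that $\bfX$ is a nontrivial system, as is forced by weak mixing). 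Exactly as in the proof of Lemma \ref{lem:compact_standard} and in Example \ref{exe:sup_non_confinée}, the relatively independent self-joining is then $\bfY\ltimes_{\phi\times\phi}(G\times G)$, and the map $\alpha:(y,g_1,g_2)\mapsto g_1g_2^{-1}$ is invariant under the product transformation, yet non-constant and independent of $\bfY$. Its level sets are invariant sets that are not measurable with respect to $\bfY$, contradicting the relative ergodicity established above. Hence $\bfW\to\bfY$ is not compact.

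The main obstacle is the self-joining bookkeeping in the second application of Theorem \ref{thm:TT-1_confined}: one must check that the relatively independent product over $\bfY$ is again a genuine $T,T^{-1}$ extension, now with fiber transformation $T\times T$, so that the theorem applies verbatim, and that the ergodicity hypothesis needed is on $(T\times T)^2=T^2\times T^2$ rather than on $T\times T$ itself. Once these identifications are made precise, the contradiction with compactness is immediate from the invariant function $\alpha$ already exhibited earlier in the paper.
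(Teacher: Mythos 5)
Your proof is correct and follows essentially the same route as the paper: identify $\bfW\otimes_\bfY\bfW$ with the $T,T^{-1}$ transformation $\bfY\ltimes(\bfX\otimes\bfX)$, apply Theorem \ref{thm:TT-1_confined} to it using ergodicity of $(T\times T)^2$, and deduce relative ergodicity via Lemma \ref{lem:confined_rel_ergodic}. The only difference is cosmetic: where the paper concludes by saying the extension is relatively weakly mixing and citing the well-known fact that such extensions are not compact, you unpack that fact explicitly via the invariant map $\alpha:(y,g_1,g_2)\mapsto g_1g_2^{-1}$, which is exactly the argument the paper alludes to when it points back to the construction in Lemma \ref{lem:compact_standard}.
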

Since all the confined extensions we built previously were compact, this gives us new examples.

\subsubsection{Ergodic properties of the cocycle}

We will prove the theorem using the framework proposed in \cite[Section 6]{erg_rokhlin_cocycles}. To do so, we first remark that the transformation $(S \ltimes T) \times (S \ltimes T)$ can easily be described using a cocycle. In order to see that, we define the $\bbZ^2$-action on $X \times X$ by
$$T_{(k_1, k_2)}(x_1, x_2) := (T^{k_1}x_1, T^{k_2}x_2),$$
and then $(S \ltimes T) \times (S \ltimes T)$ is isomorphic to
$$\begin{array}{cccc}
S_{2, \phi}: & Y^2 \times X^2 & \to & Y^2 \times X^2\\
& (y, x) & \mapsto & (S_2(y), T_{\phi(y)}(x))
\end{array}$$
where $S_2 := S \times S$ and $\phi$ is the following cocycle
$$\begin{array}{cccc}
\phi: & Y^2 & \to & \bbZ^2\\
 & (y_1, y_2) & \mapsto & (y_1(0), y_2(0))\\
\end{array}.$$
We then have to study the ergodic properties of the associated transformation 
$$\begin{array}{cccc}
\bar S_{2, \phi}: & Y^2 \times \bbZ^2 & \to & Y^2 \times \bbZ^2\\
 & (y, k) & \longmapsto & \left(S_2(y), k + \phi(y)\right)\\
\end{array}.$$
 We will also note $\nu_2 := \nu \otimes \nu$.

On $\bbZ^2$, we introduce $Z_0 := \{k \in \bbZ^2 \, | \, k_1 -k_2 \text{ is even}\}$ and $Z_1 := \{k \in \bbZ^2 \, | \, k_1 -k_2 \text{ is odd}\}$. We point out that, for every $y \in Y^2$, we have
\begin{eq}
Z_0 + \phi(y) = Z_0 \; \text{ and } \; Z_1 + \phi(y) = Z_1.
\end{eq}
Finally, for any subset $F \subset \bbZ^2$, we denote by $\l_F$ the counting measure on $F$. We then have the following result, which can be viewed as a two dimensional version of the result in \cite[\S 12]{SchmidtCocycles}:

\begin{lem} \label{lem:ergodicite_cocycle}
The ergodic components of $(Y^2 \times \bbZ^2, \bar S_{2, \phi}, \nu_2 \otimes \l_{\bbZ^2})$ are $\nu_2 \otimes \l_{Z_0}$ and $\nu_2 \otimes \l_{Z_1}$.
\end{lem}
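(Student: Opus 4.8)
The plan is to reduce the statement to the ergodicity of two skew products, and then to compute the group of essential values of the cocycle $\phi$ in the sense of Schmidt \cite{SchmidtCocycles}.

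For the reduction, I would start from the identities $Z_0 + \phi(y) = Z_0$ and $Z_1 + \phi(y) = Z_1$ recorded just above, which show that $Y^2 \times Z_0$ and $Y^2 \times Z_1$ are both $\bar S_{2,\phi}$-invariant. Since $\l_{\bbZ^2} = \l_{Z_0} + \l_{Z_1}$ and $\nu_2$ is $S_2$-invariant, the measure splits as $\nu_2 \otimes \l_{\bbZ^2} = \nu_2 \otimes \l_{Z_0} + \nu_2 \otimes \l_{Z_1}$ into two invariant measures, so it suffices to prove that each of $\nu_2 \otimes \l_{Z_0}$ and $\nu_2 \otimes \l_{Z_1}$ is ergodic. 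Moreover, the translation $(y, k) \mapsto (y, k + (1,0))$ is a measure-preserving bijection sending $Y^2 \times Z_0$ onto $Y^2 \times Z_1$ that commutes with $\bar S_{2,\phi}$, because $\phi$ does not depend on the fiber coordinate $k$; hence the two components are isomorphic and it is enough to treat $Y^2 \times Z_0$.

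Next I would set up the essential-values machinery. The base $(Y^2, \nu_2, S_2)$ is ergodic, since $S$ is Bernoulli, hence mixing, so that $S_2 = S \times S$ is ergodic. As each value $\phi(y) = (y_1(0), y_2(0))$ has both coordinates odd, it lies in $Z_0$, so $\phi$ is a $Z_0$-valued cocycle and $\bar S_{2,\phi}$ restricted to $Y^2 \times Z_0$ is the associated $Z_0$-extension. By Schmidt's theory, this extension is ergodic if and only if the group $E(\phi) \subseteq Z_0$ of essential values of $\phi$ is all of $Z_0$. The inclusion $E(\phi) \subseteq Z_0$ is automatic, since every partial sum $\phi^{(n)}(y) = \big(\sum_{j=0}^{n-1} y_1(j), \sum_{j=0}^{n-1} y_2(j)\big)$ again lies in $Z_0$. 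Thus the whole problem reduces to showing $Z_0 \subseteq E(\phi)$, which, as $E(\phi)$ is a group, amounts to checking that the generators $(1,1)$ and $(1,-1)$ are essential values.

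The main obstacle, and the only genuinely analytic step, is this last verification, which I would carry out following the method of \cite[\S 12]{SchmidtCocycles}. Fixing a target $a \in Z_0$ and a positive-measure cylinder $B \subseteq Y^2$, one must produce integers $n$ for which $\nu_2\big(B \cap S_2^{-n} B \cap \{\phi^{(n)} = a\}\big) > 0$. I would split the coordinate range $[0, n-1]$ into a long central block, disjoint from the finite windows on which $B$ and $S_2^{-n} B$ depend, and condition on those windows; the contribution of the central block is governed by the two independent simple random walks $j \mapsto \sum y_i(j)$, so by the local limit theorem its displacement hits the required value, of the appropriate parity, with probability of order $1/n$, while the independence of disjoint coordinate windows under $\nu_2$ decouples the boundary events. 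Choosing $n$ of the correct parity makes the target reachable, and this is exactly where the constraint $a_1 \equiv a_2 \bmod 2$ defining $Z_0$ is forced, which is why the essential values fill out $Z_0$ and no more. Recombining the two components then yields the claimed ergodic decomposition.
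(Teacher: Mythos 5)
Your argument is correct, but it follows the route that the paper's proof explicitly mentions and then declines to take. Your reduction is the same as the paper's: the two fibered sets $Y^2 \times Z_0$ and $Y^2 \times Z_1$ are invariant, so it suffices to prove each restriction ergodic (your remark that translation by $(1,0)$ in the fiber conjugates the two components is a small economy the paper leaves implicit). For the main step you invoke Schmidt's essential-value criterion: the base $S_2$ is ergodic, so the $Z_0$-extension is ergodic iff $E(\phi) = Z_0$, which you check on the generators $(1,1)$ and $(1,-1)$ by the cylinder-decoupling and local limit theorem computation of \cite[\S 12]{SchmidtCocycles}; the parity constraint in that computation is exactly what confines the essential values to $Z_0$. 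This is precisely the ``adaptation of \cite[\S 12]{SchmidtCocycles} to a two-dimensional setting'' that the paper says one \emph{could} do. The paper instead bypasses essential values entirely: it conjugates $(Y^2\times Z_0, \nu_2\otimes\l_{Z_0}, \bar S_{2, \phi})$, via the orbit map $\psi(y,k) = (k+\phi_n(y))_{n\in\bbZ}$, to the shift on $Z_0^{\bbZ}$ equipped with the measure generated by the symmetric random walk on $Z_0 \cong \bbZ^2$, and then quotes conservativity and ergodicity of recurrent random-walk systems from \cite[Theorem 4.5.3]{aaronson}. The trade-off: your approach is more self-contained but the only genuinely analytic step --- the local limit theorem verification that $(1,1)$ and $(1,-1)$ are essential values --- is the one you leave as a sketch and would need to be written out in full; the paper's proof is shorter because it outsources ``recurrence implies ergodicity'' to Aaronson, at the cost of checking that $\psi$ is indeed an isomorphism onto the random-walk system.
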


\begin{proof} 
We need to show that $\nu_2 \otimes \l_{Z_0}$ and $\nu_2 \otimes \l_{Z_1}$ are ergodic. Let us consider the case of $\nu_2 \otimes \l_{Z_0}$.

We could adapt the arguments in \cite[\S 12]{SchmidtCocycles} to a $2$-dimensional setting. We can also use the following map:
$$\begin{array}{cccc}
\psi: & Y^2 \times Z_0 & \to & {Z_0}^\bbZ\\
& (y, k) & \longmapsto & (k + \phi_n(y))_{n \in \bbZ}
\end{array},$$
with 
$$\phi_n(y) := \left\{
\begin{array}{cl}
0 & \text{ if } n=0\\
\sum_{j = 0}^{n-1} \phi(S_2^j(y)) & \text{ if } n > 0\\
\sum_{j = n}^{-1} \phi(S_2^j(y)) & \text{ if } n < 0\\
\end{array} \right..
$$
The map $\psi$ sends $\bar S_{2, \phi}$ onto the shift on $Z_0^\bbZ$. Moreover, $\psi_*(\nu_2 \otimes \l_{Z_0})$ is the shift-invariant measure on $Z_0^\bbZ$ obtained by applying the symmetric random walk on the counting measure $\l_{Z_0}$. Therefore our system is isomorphic to the infinite measure preserving system associated to the symmetric random walk on $Z_0$, which is known to be a conservative and ergodic system (see \cite[Theorem 4.5.3]{aaronson}). 
\end{proof}
\begin{cor} \label{cor:ergodicite_cocycle}
Define the map
$$\begin{array}{cccc}
H: & Y^2 & \to & Y^2\\
& y & \longmapsto & S_2^{N_1(y)}(y)\\
\end{array},$$
where $N_1$ is the first return time to $(0, 0)$ of $(\phi_n)_{n \geq 0}$. Then $H$ is well defined, measure preserving and ergodic on $(Y^2, \nu_2)$.
\end{cor}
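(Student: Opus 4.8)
The plan is to recognize $H$ as the first-return (induced) transformation of the skew product $\bar S_{2, \phi}$ on the fiber $A := Y^2 \times \{(0,0)\}$ over the origin, and then to read off its three properties directly from Lemma \ref{lem:ergodicite_cocycle}. This reduces the whole statement to the standard theory of induced maps for conservative ergodic systems.

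First I would record the orbit computation. Iterating $\bar S_{2, \phi}$ from a point $(y, (0,0))$ gives
$$\bar S_{2, \phi}^{\,n}(y, (0,0)) = \left(S_2^n(y), \phi_n(y)\right),$$
so the orbit returns to the fiber $A$ exactly at those times $n \geq 1$ for which $\phi_n(y) = (0,0)$; the first such time is, by definition, $N_1(y)$. Under the identification of $A$ with $Y^2$ via the first projection, the map induced by $\bar S_{2, \phi}$ on $A$ therefore sends $y$ to $S_2^{N_1(y)}(y) = H(y)$. Thus $H$ \emph{is} the transformation induced by $\bar S_{2, \phi}$ on $A$.

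Next I would place $A$ inside the correct ergodic component. Since $0 - 0$ is even, we have $(0,0) \in Z_0$, so $A \subset Y^2 \times Z_0$, and $A$ carries finite mass $(\nu_2 \otimes \l_{Z_0})(A) = \nu_2(Y^2) = 1$. By Lemma \ref{lem:ergodicite_cocycle}, the system $(Y^2 \times Z_0, \bar S_{2, \phi}, \nu_2 \otimes \l_{Z_0})$ is ergodic; moreover, as established in its proof through the isomorphism with the symmetric random walk on $Z_0$ (cf. \cite[Theorem 4.5.3]{aaronson}), it is also conservative.

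Finally I would invoke the induced-map theory for conservative ergodic systems. Conservativity yields Poincaré recurrence to the positive-measure set $A$, so $N_1(y) < \infty$ for $\nu_2$-almost every $y$, and hence $H$ is well defined almost everywhere. The induced transformation of a measure-preserving map preserves the restriction of the invariant measure to $A$, which under the identification $A \cong Y^2$ is exactly $\nu_2$, so $H$ preserves $\nu_2$. And the induced map of an ergodic conservative system on a set of positive finite measure is again ergodic, giving the ergodicity of $H$. The only delicate point is that the ambient measure $\nu_2 \otimes \l_{Z_0}$ is infinite, so both the finiteness of $N_1$ and the ergodicity of $H$ must be deduced from conservativity rather than from finiteness of the total mass; but this is precisely what the random-walk identification in Lemma \ref{lem:ergodicite_cocycle} secures, so no genuine obstacle remains.
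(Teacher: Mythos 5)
Your proposal is correct and follows exactly the paper's own argument: the paper likewise identifies $H$ with the transformation induced by $(Y^2 \times Z_0, \nu_2 \otimes \l_{Z_0}, \bar S_{2, \phi})$ on the fiber $Y^2 \times \{(0,0)\}$ and then invokes the conservativity and ergodicity established in Lemma \ref{lem:ergodicite_cocycle}. You merely spell out the orbit computation, the finiteness of $N_1$ via Poincaré recurrence, and the infinite-measure caveat more explicitly than the paper does.
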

\begin{proof}
The map $H$ is simply isomorphic to the map induced by the system $(Y^2 \times Z_0, \nu_2 \otimes \l_{Z_0}, \bar S_{2, \phi})$ on $Y^2 \times \{(0, 0)\}$. Since we have seen that this system is conservative and ergodic, we get our corollary.
\end{proof}
Actually, a closer study of $H$ would show that it is isomorphic to a Bernoulli shift, but this ergodicity result will be enough for our purposes.

\subsubsection{Confinement for $T, T^{-1}$ transformations} \label{sect:confined_T,T-1}

Using our previous section and the arguments taken from \cite{erg_rokhlin_cocycles}, we now prove Theorem \ref{thm:TT-1_confined}.

Lema\'nczyk and Lesigne gave a condition (see \cite[Propostion 8]{erg_rokhlin_cocycles}) for ergodic cocycles to yield stable extensions. Our proof here is a straightforward adaptation that takes into account the lack of ergodicity of the cocycle and shows that the extension is confined. We give a detailed proof for the sake of completeness.
\begin{proof}[Proof of Theorem \ref{thm:TT-1_confined}]
Let $\l$ be a $(S \ltimes T) \times (S \ltimes T)$-invariant self-joining of $\bfY \ltimes \bfX$ for which the projection on $Y \times Y$ is the product measure $\nu_2 = \nu \otimes \nu$. As we remarked previously $\l$ being $(S \ltimes T) \times (S \ltimes T)$-invariant is, up to a permutation of the coordinates, equivalent to $\l$ being $S_{2, \phi}$-invariant.

Let us decompose $\l$ over the projection on $Y^2$: 
$$\l = \int_{Y^2} \delta_y \otimes \mu_y \, d\nu_2(y).$$
Since $\l$ is $S_{2, \phi}$-invariant, we get $\mu_{S_2(y)} = (T_{\phi(y)})_*\mu_y$. Then we set
$$F: (y, k) \mapsto (T_k^{-1})_*\mu_y$$
which is $\bar S_{2, \phi}$-invariant. So, using Lemma \ref{lem:ergodicite_cocycle}, we know that it is almost surely constant on $Y^2 \times Z_0$ and $Y^2 \times Z_1$. In particular, for $\nu_2$-almost every $y$, we have $\mu_y = F(y, 0) = \gamma_0$, where $\gamma_0$ is a probability measure on $X^2$. Therefore $\l = \nu_2 \otimes \gamma_0$. 
Moreover, since, for almost every $y$, $F(y, \cdot)$ is constant on $Z_0$, by taking $k = (2, 0)$ and $k = (0, 0)$, we get
$$((T^{2} \times \rm{Id})^{-1})_*\gamma_0 = \gamma_0,$$
which we can also write as
$$(T^{2} \times \rm{Id})_*\gamma_0 = \gamma_0.$$
Since both marginals of $\gamma_0$ are $\mu$, it implies that $\gamma_0$ is a joining of $(X, \mu, T^2)$ and $(X, \mu, \rm{Id})$. However, all ergodic transformations are disjoint from the identity map, so $\gamma_0$ is a product measure, and more precisely $\gamma_0 = \mu \otimes \mu$. This means that 
$$\l = \nu \otimes \nu \otimes \mu \otimes \mu.$$

\end{proof}
\begin{rmq}
The ergodicity assumption on $T^2$ is necessary to get the result of the theorem. Indeed, for example, if $T^2 = \rm{Id}$ we get $T = T^{-1}$, and then $S \ltimes T = S \times T$, which is clearly not confined (see Corollary \ref{cor:extension_not_confined}). In fact, for any transformation for which $T^2$ is not ergodic, the $T, T^{-1}$ extension is not confined. Indeed, take $f$ a non-trivial $T^2$-invariant function on $X$. Set $\xi := (f, f \circ T)$ and $\tilde\xi(y, x) := \xi(x)$. Since $f \circ T^2 = f$, we also get that $f \circ T^{-1} = f \circ T$. Using this, we check that $\sigma(\tilde \xi)$ is invariant under the $T, T^{-1}$ transformation. But, by construction, it is independent of the $Y$ coordinate, which means (using again Corollary \ref{cor:extension_not_confined}) that the extension is not confined.


Our additional ergodicity assumption on $T^2$ is here to compensate the lack of ergodicity from the cocycle that arises from the fact that the random walk set by $\phi$ is not ergodic. Indeed, if we modify $Y$ and take random variables uniformly distributed on $\{-1, 0, 1\}$, the random walk it generates on $\bbZ^2$ is ergodic, the cocycle is as well and we get the result of the theorem by only assuming that $T$ is ergodic. This is the setup to which Lema\'nczyk and Lesigne applied their stability criterion. 
\end{rmq}

\subsubsection{Relative weak mixing of $T, T^{-1}$ transformations}

In this section, we prove Corollary \ref{cor:TT-1_relWM} using the notion of relative weak mixing (see \cite{Furstenberg_book}).
\begin{defi}
Let $\bfU$ and $\bfV$ be dynamical systems and $\alpha: \bfU \to \bfV$ a factor map. The extension $\bfU \overset{\a}{\to} \bfV$ is relatively weakly mixing if the extension 
$$\bfU \otimes_\bfV\bfU \to \bfV$$ 
is relatively ergodic.
\end{defi}
It is a well known fact that relatively weakly mixing extensions are not compact (see \cite[Chapter 6, Part 4]{Furstenberg_book}). It can also be seen from the construction we give in the proof of Lemma \ref{lem:compact_standard}.

\begin{proof}[Proof of Corollary \ref{cor:TT-1_relWM}]
Now assume that $T$ is weakly mixing. Therefore $(T \times T)^2$ acts ergodically on $(X \times X, \A \otimes \A, \mu \otimes \mu)$. One can check that the system, that we note $\bfY \ltimes (\bfX \otimes \bfX)$, given on $(Y \times X \times X, \nu \otimes \mu \otimes \mu)$ by the transformation 
$$\begin{array}{cccc}
S \ltimes (T \times T): & Y \times X \times X & \to & Y \times X \times X \\
& (y, x, x') & \mapsto & (Sy, T^{y(0)}x, T^{y(0)}x')
\end{array},$$
is the relatively independent product of $\bfY \ltimes \bfX$ over $\bfY$. Using Theorem \ref{thm:TT-1_confined}, we know that $\bfY \ltimes (\bfX \otimes \bfX) \to \bfY$ is confined, and, by Lemma \ref{lem:confined_rel_ergodic}, it is relatively ergodic. It follows that $\bfY \ltimes \bfX \to \bfY$ is relatively weakly mixing, and therefore not compact.
\end{proof}

\subsection{A non-standard $T, T^{-1}$ extension} \label{sect:non-standard}

The goal of this section is to show the following result:

\begin{thm} \label{thm:hyper-confined_extension}
If $\bfX$ has the $4$-fold PID property and $T^2$ acts ergodically on $(X, \A, \mu)$, then the extension $\bfY \ltimes \bfX \overset{\pi}{\to} \bfY$ is not standard.
\end{thm}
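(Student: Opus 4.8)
The plan is to argue by contradiction, reducing standardness to the existence of a super-innovation and then contradicting the confinement already established in Theorem~\ref{thm:TT-1_confined}. Write $\bfW := \bfY \ltimes \bfX$ and let $\pi : \bfW \to \bfY$ be the projection onto the $y$-coordinate, so that $\pi$ generates $\B$ and the fibre evolves by $(y,x) \mapsto (Sy, T^{y(0)}x)$. Since $T^2$ is ergodic, $\bfW \overset{\pi}{\to} \bfY$ is confined by Theorem~\ref{thm:TT-1_confined}, and it is non-trivial because $\bfX$ is. Suppose it were standard. By the super-innovation reformulation of standardness (Remark~\ref{rmq*:super-innov}) there is an extension $\tilde\bfY \overset{\a}{\to} \bfY$ such that $\bfW \otimes_\bfY \tilde\bfY \to \tilde\bfY$ admits a super-innovation $\C$. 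As $\C$ is a factor $\s$-algebra independent of $\tilde\bfY$ and generates the ambient system together with it, I may realise that system as a direct product $\tilde\bfY \otimes \bfC$ on which the fibre coordinate becomes a measurable function $x = x(\tilde y, c)$ of the base point and an independent noise, subject to the equivariance $x(\tilde S \tilde y, R c) = T^{\a(\tilde y)(0)} x(\tilde y, c)$ and to the relative independence $\bfW \indep_\bfY \tilde\bfY$. The goal is to show that the hypotheses force $x$ to be measurable with respect to $\B$ together with a noise independent of $\B$, i.e. to produce a super-innovation for $\bfW \overset{\pi}{\to} \bfY$ over the base $\bfY$ itself; by Proposition~\ref{prop:extension_not_confined} this contradicts confinement unless $\B = \A$, which is excluded.

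The naive approach is to try to show directly that $\bfW \otimes_\bfY \tilde\bfY \to \tilde\bfY$ is confined by rerunning the cocycle computation of Theorem~\ref{thm:TT-1_confined}: a $2$-fold self-joining of $\bfW \otimes_\bfY \tilde\bfY$ with independent copies of $\tilde\bfY$ has independent $y$-coordinates, so its fibre part is driven by the $\bbZ^2$-valued random walk of $\phi$, and one would like to conclude as before that the conditional fibre measure is a joining of $(X,\mu,T^2)$ with $(X,\mu,\mathrm{Id})$, hence a product. The obstruction is that this step used Lemma~\ref{lem:ergodicite_cocycle} and Corollary~\ref{cor:ergodicite_cocycle}, whose proof that the induced first-return map is ergodic relied on the base being the Bernoulli square $(Y^2,\nu_2)$. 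For an arbitrary extended base $\tilde\bfY^{\,\times 2}$ the induced return map need not be ergodic, so the fibre joinings are only pinned down up to the invariant structure of $\tilde\bfY$ and are \emph{not} individually forced to be products. This is precisely where the bare $2$-fold argument breaks and where a stronger hypothesis is required.

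To repair this I would double the fibre: pass to the relatively independent self-joining over $\bfY$, i.e. work with $\bfW' := \bfY \ltimes (\bfX \otimes \bfX)$ from Corollary~\ref{cor:TT-1_relWM}, whose fibre carries two copies of $\bfX$, and carry the putative super-innovation along (two independent noises produce two fibre copies $x(\tilde y,c_1), x(\tilde y,c_2)$ over the same base). A $2$-fold self-joining over independent bases then exhibits \emph{four} copies of $\bfX$, and the cocycle reduction of Section~\ref{sect:T, T-1_cocycle} now applies with genuinely independent random-walk increments; combined with the ergodicity of $T^2$ to collapse the diagonal contributions, this presents the four coordinates as a pairwise independent $4$-fold self-joining of the fixed system $(X,\A,\mu,T)$. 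The $4$-fold PID property (Definition~\ref{defi:PID}) then forces it to be the product joining. Unwinding this equality shows that $x(\tilde y,c)$ cannot genuinely depend on $\tilde y$ beyond its image $y$, so up to a null set $x$ is a function of $(y,c)$ with $c$ independent of $\B$ — the super-innovation at the base that contradicts Theorem~\ref{thm:TT-1_confined}.

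I expect the main obstacle to be exactly the final reduction in the third paragraph: organising the four fibre coordinates, which are coupled through the shared random walk rather than through $T$ itself, into a \emph{bona fide} pairwise independent $4$-fold self-joining of $(X,\A,\mu,T)$ on which PID can bite. This requires passing to the induced system of Corollary~\ref{cor:ergodicite_cocycle}, carefully separating the even and odd ergodic components $Z_0$ and $Z_1$ of the cocycle (so as to recover a diagonal $T$-shift rather than only a $T^2$-shift between parity classes), and using $T^2$-ergodicity to trivialise the residual diagonal couplings introduced by the non-ergodicity of $\tilde\bfY$. This is also the structural reason the hypothesis must strengthen from the disjointness of $(X,T^2)$ from the identity, which sufficed for $2$-fold confinement in Theorem~\ref{thm:TT-1_confined}, to the full $4$-fold PID property used here.
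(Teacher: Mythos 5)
Your opening reduction is exactly the paper's: argue by contradiction via Remark~\ref{rmq*:super-innov}, so that it suffices to show $\bfW:=\bfY\ltimes\bfX\otimes_\bfY\tilde\bfY\to\tilde\bfY$ has no super-innovation, and your diagnosis of why the bare $2$-fold cocycle argument of Theorem~\ref{thm:TT-1_confined} breaks over a general base $\tilde\bfY$ (loss of ergodicity of the induced return map) is also the right one. But the step you yourself flag as the ``main obstacle'' is a genuine gap, and the repair you sketch does not work as stated. The four fibre coordinates you produce are \emph{not} a $4$-fold self-joining of $(X,\A,\mu,T)$: two of them are driven by the increment $y(0)$ of one base point and the other two by the independent increment $y'(0)$ of the other, so their joint law is invariant under maps of the form $T_{\phi(y)}$ indexed by the random walk, not under $T\times T\times T\times T$. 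Inducing on the simultaneous return of the $\bbZ^2$-walk to the origin does not rescue this, because the induced transformation acts as the \emph{identity} on all fibre coordinates ($\phi_{N_1}=0$); what one recovers is a joining with an identity system, which is useless for applying Definition~\ref{defi:PID}. So there is no ``pairwise independent $4$-fold self-joining of $(X,\A,\mu,T)$'' on which the PID property can bite directly.

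The paper resolves this differently. It forms the $2$-fold relatively independent product of $\bfW\times\bfZ$ over the putative super-innovation $\bfZ$, obtaining a self-joining $\rho$ of $\bfW$ in which the copies of $\tilde\bfY$ are independent and each $\bfX_i$ is independent of $\tilde\bfY_1\vee\tilde\bfY_2$. It then encodes the residual dependence on the base in the conditional law $\mu_\bullet=\mu_{\tilde y}\in\P(X\times X)$, viewed as a quasifactor. The induced map $(Q\times Q)^{\hat N_1}$ and Corollary~\ref{cor:ergodicite_cocycle} are used only to show that the triple $(\mu_\bullet,p_{\bfX_1},p_{\bfX_2})$ is independent of $\bfY_1\vee\bfY_2$ (via disjointness of an ergodic map from the identity); combined with the almost-sure identity $\hat\rho=(\theta_{\phi(y)}\times T_{\phi(y)})_*\hat\rho$ and the positive probability of $\{\phi(y)=(1,1)\}$, this yields a $(\theta_{(1,1)}\times T\times T)$-invariant, pairwise independent joining of \emph{two} copies of $\bfX$ with a third system. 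The $4$-fold PID property enters only through Ryzhikov's argument (Lemma~\ref{lem:butterfly}), which doubles over the third system to manufacture the four pairwise independent copies in a setting where $T^{\times 4}$-invariance is automatic. Finally, the contradiction is not obtained by building a super-innovation over $\bfY$ itself, but directly: $\rho$ being the product forces $\bfZ\indep\bfW$ by Lemma~\ref{lem:rel_prod}, whence $\bfW$ is $\tilde\bfY$-measurable by Lemma~\ref{lem:independance_mesurabilite}, which is absurd. Your ``unwinding'' step would need all of this machinery made precise before it could be accepted.
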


We introduced the PID property in Definition \ref{defi:PID}. It is known that there are systems satisfying this property: it is shown in \cite{ryzhikov} that all finite rank mixing transformations have the PID property. We will make use of Ryzhikov's result from \cite[\S 1, Section 2]{ryzhikov}. He proves that if $\bfX$ has the $4$-fold PID property, any joining $\l$ of $\bfX$ with any two systems $\bfZ_1$ and $\bfZ_2$ that is pairwise independent has to be the product joining. In fact, we will only need a simplified version, which we write here as a consequence of Lemma \ref{lem:rel_prod}:
\begin{lem} \label{lem:butterfly}
Assume that $\bfX$ has the $4$-fold PID property. Let $\bfZ$ be a dynamical system and consider a joining $\bfX_1 \times \bfX_2 \times \bfZ$ where $\bfX_1$ and $\bfX_2$ are copies of $\bfX$. If this joining is pairwise independent, then it is the product joining.
\end{lem}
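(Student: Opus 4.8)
The plan is to derive this from Ryzhikov's full result on the $4$-fold PID property, reducing the three-system pairwise-independent joining to a four-fold self-joining of $\bfX$ that is pairwise independent, then invoking the PID property and Lemma \ref{lem:rel_prod} to conclude. The statement to prove is that any pairwise independent joining of $\bfX_1 \times \bfX_2 \times \bfZ$ (with $\bfX_1, \bfX_2$ copies of $\bfX$ and $\bfZ$ arbitrary) must be the full product joining $\mu \otimes \mu \otimes \rho$.

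\begin{proof}
Let $\l$ be a pairwise independent joining giving the system $\bfX_1 \times_\l \bfX_2 \times_\l \bfZ$. First I would reduce to the case where $\bfZ$ is itself a system built over copies of $\bfX$. The key observation is that, since $\bfX_1$ is independent of $\bfZ$ and $\bfX_2$ is independent of $\bfZ$, we can form the relatively independent product of this joining over the factor $\bfZ$. Concretely, consider the system obtained by taking two independent copies $\bfX_1', \bfX_2'$ of $\bfX_1, \bfX_2$ conditioned on $\bfZ$; equivalently, duplicate the $\bfX$-coordinates relatively over $\bfZ$ to obtain a pairwise independent $4$-fold self-joining of $\bfX$. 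Applying Ryzhikov's result from \cite[\S 1, Section 2]{ryzhikov}, which states that for a system with the $4$-fold PID property every pairwise independent joining with two auxiliary systems is the product joining, we get that the four copies of $\bfX$ together with $\bfZ$ are mutually independent in this enlarged system.

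The main step is then to translate the mutual independence in the relatively independent product back into a statement about the original joining $\l$. Using Lemma \ref{lem:rel_prod}, the fact that the two copies of $\bfX_1$ (respectively $\bfX_2$) are independent in the relatively independent product over $\bfZ$ forces $\bfX_1$ (respectively $\bfX_2$) to be independent of $\bfZ$ in the original system, and more importantly forces the conditional measures $\mu_z$ of $\bfX_1 \times \bfX_2$ given $\bfZ = z$ to be $\rho$-almost surely equal to a fixed product measure. Since the pairwise independence already gives that each marginal conditional law is $\mu$, the upgraded mutual independence gives that the joint conditional law of $(\bfX_1, \bfX_2)$ given $\bfZ$ is $\mu \otimes \mu$ almost surely. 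Integrating over $\bfZ$ yields $\l = \mu \otimes \mu \otimes \rho$.

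The hard part will be setting up the relatively independent product correctly so that the resulting four-fold self-joining of $\bfX$ is genuinely pairwise independent — one must check that duplicating the $\bfX$-coordinates over $\bfZ$ preserves pairwise independence between the new copies and between each copy and $\bfZ$. This is exactly where Lemma \ref{lem:rel_prod} does the work: the pairwise independence of the original three coordinates, combined with the relative product construction, guarantees that no new correlations are introduced. Once pairwise independence of the four-fold self-joining is verified, the PID property applies directly and the rest is a routine disintegration argument.
\end{proof}
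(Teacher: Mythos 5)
Your proposal follows essentially the same route as the paper: form the relatively independent product of $\l$ over $\bfZ$, observe that the four resulting copies of $\bfX$ give a pairwise independent $4$-fold self-joining, invoke the $4$-fold PID property to upgrade to mutual independence, and use Lemma \ref{lem:rel_prod} to pull the conclusion back to $\l$. That is exactly the paper's argument, and your final paragraph states the correct logic.

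Two places in your write-up drift from that correct skeleton, though. First, the appeal to Ryzhikov's full theorem (one copy of $\bfX$ joined with \emph{two} auxiliary systems) in the middle is both unnecessary and not directly applicable: the enlarged system has five components (four copies of $\bfX$ plus $\bfZ$), so Ryzhikov's statement does not apply to it as is, and the whole point of Lemma \ref{lem:butterfly} is to derive the needed special case from the bare Definition \ref{defi:PID} rather than from the stronger theorem. All you need is the PID property applied to the $4$-fold self-joining obtained by forgetting $\bfZ$; the conclusion that $\bfZ$ is \emph{also} independent of everything is not a direct output of that step but follows afterwards. Second, in the translation back, the relevant pair for Lemma \ref{lem:rel_prod} is $\bfX_1'\vee\bfX_2'$ versus $\bfX_1''\vee\bfX_2''$ (the two copies of the \emph{joint} factor $\bfX_1\vee\bfX_2$ in the relative product over $\bfZ$): their independence is what forces $\bfX_1\vee\bfX_2 \indep \bfZ$ under $\l$, and hence $\l = \mu\otimes\mu\otimes\rho$ once combined with $\bfX_1\indep\bfX_2$. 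The independence of the two copies of $\bfX_1$ alone, which you emphasize first, only re-derives the already-assumed independence of $\bfX_1$ and $\bfZ$ and yields nothing new. With those two points repaired, your argument coincides with the paper's proof.
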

\begin{proof}
Let $\l$ be a pairwise independent joining on $X \times X \times Z$. Take $\bfW$ the system given by the relatively independent product of $\l$ over $\bfZ$, and denote $\bfX_1'$, $\bfX_2'$, $\bfX_1''$ and $\bfX_2''$ the copies of $\bfX$ on $\bfW$. Using our assumption on $\l$ and Lemma \ref{lem:rel_prod}, we know that the quadruplet $(\bfX_1', \bfX_2',\bfX_1'', \bfX_2'')$ is pairwise independent. Next, the $4$-fold PID property tells us that this quadruplet is mutually independent. Therefore, $\bfX_1' \vee \bfX_2'$ and $\bfX_1'' \vee \bfX_2''$ are independent. Finally, using Lemma \ref{lem:rel_prod} once more, we know that $\bfX_1 \vee \bfX_2$ and $\bfZ$ are independent, which implies that $\l$ is the product joining.
\end{proof}

\begin{proof}[Proof of Theorem \ref{thm:hyper-confined_extension}.]
Let us take an extension $\tilde\bfY \overset{\a}{\to} \bfY$. Consider the system $\bfW := \tilde\bfY \ltimes \bfX = (\tilde Y \times X, \tilde \nu \otimes \mu, Q)$ with $Q$ being the map
$$Q: (\tilde y, x) \mapsto (\tilde S \tilde y, T^{y(0)}x),$$
using the notation $y := \alpha(\tilde y)$. One can check that $\bfW$ is the relatively independent product of $\bfY \ltimes \bfX$ and $\tilde\bfY$ over $\bfY$.
Following Remark \ref{rmq*:super-innov}, we need to show that the extension $\tilde\bfY \ltimes \bfX \overset{\tilde\pi}{\to} \tilde\bfY$ admits no super-innovation. 

Assume that $\bfW \overset{\tilde\pi}{\to} \tilde\bfY$ has a super-innovation. Therefore there is a system $\bfZ$ and a joining $\bfW \times \bfZ$ in which $\bfZ$ is independent of $\tilde\bfY$ and $\bfW$ is $\tilde\bfY \vee \bfZ$-measurable. We consider the $2$-fold relatively independent product of $\bfW \times \bfZ$ over $\bfZ$ and denote $\rho$ the measure of the self-joining of $\bfW$ we obtain. Our goal is to show that $\rho$ is the product joining. We represent the construction of $\rho$ in the following diagram:

\vspace{5mm}
\centerline{\includegraphics[scale=1.1]{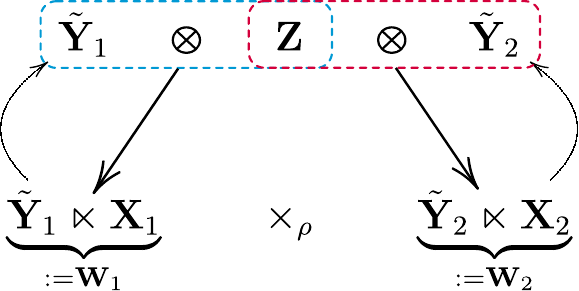}}
\vspace{2mm}
In this joining, we have the two following properties:
\begin{enumerate} [label = (\roman*)]
\item the copies of $\tilde\bfY$ are independent,
\item we have the additional independence: for $i = 1, 2$ we get that $\bfX_i$ is independent of $\tilde\bfY_1 \vee \tilde\bfY_2$.
\end{enumerate}
Indeed, (i) follows from the independence of $\bfZ$ and $\tilde\bfY$. To get (ii) (with $i=1$ for example), first note that $\bfX_1 \vee \tilde\bfY_1$ is $\bfZ \vee \tilde\bfY_1$-measurable, therefore is independent of $\tilde\bfY_2$. Since $\bfX_1$ and $\tilde\bfY_1$ are independent, this yields that $\bfX_1$ is independent of $\tilde\bfY_1 \vee \tilde\bfY_2$, which gives us (ii).

For the rest of this proof, we switch the coordinates and view $\rho$ as a measure on $\tilde Y \times \tilde Y \times X \times X$. We note a point on this space as $(\tilde y_1, \tilde y_2, x_1, x_2)$ or $(\tilde y, x)$, for short. Moreover, we use the notation $y := \a \times \a(\tilde y)$, and this enables us to define $\tilde \phi$, $\tilde \phi_n$ and $\tilde N_1$ as maps on $\tilde Y \times \tilde Y$ by setting
$$\tilde \phi(\tilde y) := \phi(y), \; \tilde\phi_n(\tilde y) = \phi_n(y) \; \text{ and } \; \tilde N_1(\tilde y) := N_1(y).$$
We also define $\hat N_1$ on $\tilde Y \times \tilde Y \times X \times X$ by setting $\hat N_1(\tilde y, x) := N_1(y)$. We now get back to our proof.

From (ii), we know that each $\bfX_i$ is independent of $\tilde\bfY_1 \vee \tilde\bfY_2$. Moreover, from Theorem \ref{thm:TT-1_confined}, we know that $\bfX_1$ and $\bfX_2$ are independent, so, under $\rho$, $\tilde\bfY_1 \vee \tilde\bfY_2$, $\bfX_1$ and $\bfX_2$ are pairwise independent. We then want to use Lemma \ref{lem:butterfly} to get the mutual independence, but we do not have a transformation $\theta$ on $(\tilde Y \times \tilde Y, \tilde\nu_2)$ such that $\rho$ is $(\theta \times T \times T)$-invariant. Therefore, our strategy bellow is, instead of considering $\tilde\bfY_1 \vee \tilde\bfY_2$, to only consider the conditional law of $\bfX_1 \vee \bfX_2$ given $\tilde\bfY_1 \vee \tilde\bfY_2$, which takes its values in $\P(X \times X)$. Our goal is then to find a suitable transformation on $\P(X \times X)$ and an invariant joining on $\P(X \times X) \times X \times X$ in order to finally apply Lemma \ref{lem:butterfly}.

Since $\tilde\bfY_1$ and $\tilde\bfY_2$ are independent, $\rho$ projects onto $\tilde\nu_2 := \tilde\nu \otimes \tilde\nu$ on $\tilde Y^2$. We decompose $\rho$ over $\tilde Y^2$:
$$\rho = \int_{\tilde Y^2} \delta_{\tilde y} \otimes \mu_{\tilde y} \, d\tilde\nu_2(\tilde y).$$
where each $\mu_{\tilde y}$ is a probability measure on $X \times X $. We consider the following action on $\P(X \times X)$ :
$$\forall k \in \bbZ^2, \; \theta_k: \gamma \mapsto (T_{k})_*\gamma.$$
The fact that $\rho$ is invariant yields: 
\begin{eq} \label{eq:F_invariance}
\mu_{\tilde S_2(\tilde y)} = (T_{\tilde\phi(\tilde y)})_*\mu_{\tilde y} = \theta_{\tilde\phi(\tilde y)}\mu_{\tilde y} \; \text{ almost surely},
\end{eq}
where $\tilde\phi(\tilde y) = \phi(y) = (y_1(0), y_2(0)).$

We now set $\mu_\bullet := \mu_{p_{\tilde\bfY_1 \vee \tilde\bfY_2}(\cdot)}$ and consider the triplet $(\mu_\bullet, p_{\bfX_1}, p_{\bfX_2})$. It is invariant under
$$(Q \times Q)^{\hat N_1}.$$
Indeed:
\begin{align*}
(p_{\bfX_1}, p_{\bfX_2}) \circ (Q \times Q)^{\hat N_1(\tilde y, x)}(\tilde y, x) = T_{\phi_{N_1(y)}(y)}(x) = x,
\end{align*}
and, using \eqref{eq:F_invariance}:
$$\mu_\bullet \circ (Q \times Q)^{\hat N_1(\tilde y, x)}(\tilde y, x) = \theta_{\phi_{N_1(y)}(y)} \mu_{\tilde y} = \mu_{\tilde y},$$
because $\phi_{N_1(y)}(y) = 0$.
However, it follows from Corollary \ref{cor:ergodicite_cocycle} that $\bfY_1 \vee \bfY_2$ is ergodic under $(Q \times Q)^{\hat N_1}$. Therefore, using the fact that ergodic transformations are disjoint from any identity map, it implies that $(\mu_\bullet, p_{\bfX_1}, p_{\bfX_2})$ is independent of $\bfY_1 \vee \bfY_2$. Set $\hat\rho := {(\mu_\bullet, p_{\bfX_1}, p_{\bfX_2})}_*\rho$, which is a probability measure on $\P(X \times X) \times X \times X$. Also consider $\hat \rho_y$ the conditional law of $(\mu_\bullet, p_{\bfX_1}, p_{\bfX_2})$ given $\bfY_1 \vee \bfY_2$ under $\rho$. Using \eqref{eq:F_invariance} and the $Q \times Q$-invariance of $\rho$, we get that 
$$\hat\rho_{S^2y} = (\theta_{\phi(y)} \times T_{\phi(y)})_*\hat\rho_y  \; \text{ almost surely}.$$
Moreover, because $(\mu_\bullet, p_{\bfX_1}, p_{\bfX_2})$ is independent of $\bfY_1 \vee \bfY_2$, we know that, $\nu_2$-almost surely, $\hat \rho_y = \hat\rho$. This yields:
$$\hat\rho = (\theta_{\phi(y)} \times T_{\phi(y)})_* \hat\rho \; \text{ almost surely}.$$
In particular, since $\nu_2(\{\phi(y) = (1, 1)\}) > 0$, we get that $\hat \rho$ is $(\theta_{(1, 1)} \times T \times T)$-invariant.


Let us study $\hat\rho$ more closely: using Theorem \ref{thm:TT-1_confined}, we know that, under $\hat\rho$, $p_{\bfX_1}$ and $p_{\bfX_2}$ are independent. Moreover, using the property (ii) introduced earlier, we get that, for $i = 1, 2$, $p_{\bfX_i}$ is independent of $\mu_\bullet$. So $\hat\rho$ is a pairwise independent joining. We then use  Ryzhikov's result from \cite{ryzhikov} as expressed in Lemma \ref{lem:butterfly} and the $4$-fold PID property of $\bfX$ to conclude that $\hat\rho$ is the product joining.

Therefore, $\bfX_1 \vee \bfX_2$ is independent of $\mu_\bullet$, which is only possible if $\bfX_1 \vee \bfX_2$ is independent of $\tilde\bfY_1 \vee \tilde\bfY_2$.

In conclusion:
$$\rho = \tilde\nu \otimes \mu \otimes \tilde\nu \otimes \mu.$$
Using Lemma \ref{lem:rel_prod}, it implies that $\bfZ$ and $\bfW$ are independent and Lemma \ref{lem:independance_mesurabilite} yields that $\bfW$ is $\tilde\bfY$-measurable, which is absurd. So $\bfW \overset{\tilde\pi}{\to} \tilde\bfY$ admits no super-innovation.
\end{proof}

Given our work in Section \ref{sect:confined_no_super-innov}, it would be natural to try to prove that $\tilde \bfY \ltimes \bfX \overset{\tilde\pi}{\to} \tilde\bfY$ has no super-innovation by showing that it is confined. This would show that $\bfY \ltimes \bfX \overset{\pi}{\to} \bfY$ has a stronger property: it would be \emph{hyper-confined}, as we define below
\begin{defi}
Let $\bfX := (X, \A, \mu, T)$ be a dynamical system. We say that $\A \arr \B$ on $\bfX$ is \emph{hyper-confined} if for every $\beta: \tilde\bfX \to \bfX$ and for every extension $\tilde\B \arr \B$ on $\tilde\bfX$ such that $\A \indep_\B \tilde\B$, we have that $\A \vee \tilde\B \arr \tilde\B$ is confined.

Equivalently, an extension given by a factor map $\pi: \bfX \to \bfY$ is \emph{hyper-confined} if, for every extension $\tilde \bfY \overset{\a}{\to} \bfY$, the extension $\bfX \otimes_\bfY \tilde\bfY \overset{\tilde\pi}{\to} \tilde\bfY$ is confined.
\end{defi}
However, in trying to prove that the $T, T^{-1}$ extension is hyper-confined, we get a similar setup to the proof we gave above, but with a self-joining of $\bfW$ that does not need to verify property (ii). Since we did not manage to complete the proof in that more general case, the following question remains open:
\begin{quest}
Is $\pi: \bfY \ltimes \bfX \to \bfY$ hyper-confined ? More generally, is it possible to build a hyper-confined extension ?
\end{quest}

\section{Application to non-standard dynamical filtrations} \label{sect:non-standard_I-cosy}

Let $\bfX := (X, \A, \mu, T)$. A \emph{dynamical filtration} is a pair $(\F, T)$ such that $\F := (\F_n)_{n \leq 0}$ is a filtration in discrete negative time on $\A$ and each $\F_n$ is $T$-invariant. The theory of dynamical filtrations was initiated by Paul Lanthier in (\cite{these_PL}, \cite{article_PL}). The definitions we give in Section \ref{sect:prod_type} for extensions are based on the theory of filtrations we present here, therefore the process is very similar.

\begin{defi}
Let $(\F, T_1)$ be a dynamical filtration on $\bfX_1 := (X_1, \A_1, \mu_1, T_1)$ and $(\G, T_2)$ a dynamical filtration on $\bfX_2 := (X_2, \A_2, \mu_2, T_2)$. We say that $(\F, T_1)$ and $(\G, T_2)$ are isomorphic if there is an isomorphism $\Phi: \quotient{\bfX_1}{\F_0} \arr \quotient{\bfX_2}{\G_0}$ such that, for all $n \leq 0$, $\Phi(\F_n) = \G_n$ mod $\mu_2$.

If $\F$ and $\G$ are defined on the same system $(X, \A, \mu, T)$, we say that $(\F, T)$ is immersed in $(\G, T)$ if for every $n \leq 0$, $\F_n \subset \G_n$ and we have the following relative independence:
$$\F_{n+1} \indep_{\F_n} \G_n.$$
In general, we say that $(\F, T_1)$ is immersible in $(\G, T_2)$ if there is a dynamical filtration isomorphic to $(\F, T_1)$ immersed in $(\G, T_2)$.
\end{defi}
We can then define our main classes of filtrations:
\begin{defi} \label{defi:standard_filtration}
Let $(\F, T)$ be a dynamical filtration on $\bfX := (X, \A, \mu, T)$. It is of product type if there is a sequence $(\C_n)_{n \leq 0}$ of mutually independent factor $\s$-algebras such that 
$$\forall n \leq 0, \; \F_n = \bigvee_{k \leq n} \C_k \; \text{ mod } \, \mu.$$
It is standard if it is immersible in a product type dynamical filtration.
\end{defi}

We chose our definitions to get the following properties:
\begin{prop} \label{prop:standard_filt_product_type}
We have
\begin{enumerate}
\item If $(\F, T)$ is of product type, then every extension $\F_{n+1} \arr \F_n$ is of product type.
\item If $(\F, T)$ is standard, then every extension $\F_{n+1} \arr \F_n$ is standard.
\end{enumerate}
\end{prop}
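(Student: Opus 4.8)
The plan is to treat the two parts in turn, unwinding the definitions; part (1) is immediate and part (2) amounts to transporting the immersion that witnesses standardness of the whole filtration down to each single step.

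For part (1), suppose $(\F, T)$ is of product type, so there are mutually independent factor $\s$-algebras $(\C_k)_{k \leq 0}$ with $\F_n = \bigvee_{k \leq n} \C_k$ mod $\mu$ for every $n$. Fixing $n$, I would take $\C := \C_{n+1}$. Then $\F_{n+1} = \bigvee_{k \leq n+1}\C_k = \F_n \vee \C_{n+1}$ mod $\mu$, and mutual independence of the $(\C_k)$ gives $\C_{n+1} \indep \bigvee_{k \leq n}\C_k = \F_n$. Since $\C_{n+1}$ is a factor $\s$-algebra, this is exactly the data required by Definition \ref{def:prod_type}, so $\F_{n+1} \arr \F_n$ is of product type.

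For part (2), assume $(\F, T)$ is standard, i.e. immersible in a product type filtration. First I would reduce to the immersed case: there is a filtration $(\F', T')$ isomorphic to $(\F, T)$ and immersed in a product type filtration $(\G, T')$ on the same system. A filtration isomorphism sends each $\F_n$ to $\F'_n$, hence descends to an isomorphism of the two-step extensions $\F_{n+1} \arr \F_n$ and $\F'_{n+1} \arr \F'_n$ in the sense of Definition \ref{defi:iso}; and standardness of an extension is invariant under isomorphism, being defined through immersibility (Definition \ref{defi:standard}). So it suffices to prove that each $\F'_{n+1} \arr \F'_n$ is standard, and I may as well write $(\F, T)$ for $(\F', T')$. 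Now, with $(\G, T)$ of product type, I have $\F_n \subset \G_n$ and the relative independence $\F_{n+1} \indep_{\F_n} \G_n$ for all $n$, coming from the definition of immersion of filtrations.

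Fix $n$. By part (1) applied to $(\G, T)$, the extension $\G_{n+1} \arr \G_n$ is of product type. It then remains to verify that $\F_{n+1} \arr \F_n$ is immersed in $\G_{n+1} \arr \G_n$, which requires $\F_{n+1} \subset \G_{n+1}$, $\F_n \subset \G_n$, and $\F_{n+1} \indep_{\F_n} \G_n$: the first two are the inclusions of the filtration immersion at levels $n+1$ and $n$, and the third is exactly the immersion condition at level $n$. Hence $\F_{n+1} \arr \F_n$ is immersed in a product type extension, which by Definition \ref{defi:standard} makes it standard. The only genuine point requiring care is the reduction to the immersed case — confirming that a filtration isomorphism restricts to an isomorphism of each two-step extension so that the (isomorphism-invariant) property of being a standard extension is transported correctly; once this bookkeeping is in place, both parts follow purely by chasing definitions, and I expect no analytic difficulty.
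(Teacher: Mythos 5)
Your proof is correct and takes essentially the same route as the paper, whose entire proof is ``It follows from the definitions'': part (1) is the observation that $\F_{n+1}=\F_n\vee\C_{n+1}$ with $\C_{n+1}\indep\F_n$, and part (2) transports the filtration immersion to a two-step immersion after reducing to the immersed case via isomorphism-invariance of standardness. Your write-up just makes explicit the bookkeeping the paper leaves to the reader.
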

\begin{proof}
It follows from the definitions.
\end{proof}
Below, we use this proposition to build a non-standard filtration using a non-standard extension.

In the static case (i.e. when $T = \mathrm{Id}$), the existence of super-innovations implies that the standardness of a filtration is an asymptotic property \cite[Proposition 3.38]{Laurent_standardness-cosiness}: $(\F_n)_{n \leq 0}$ is standard if and only if there is a $n_0 \leq 0$ such that $(\F_n)_{n \leq n_0}$ is standard. In the dynamical case, the existence of extensions without super-innovations puts that asymptotic property in question. Then, the existence of non-standard extensions shows that standardness is not an asymptotic property for dynamical filtrations (using Proposition \ref{prop:standard_filt_product_type}). This is the main guideline for what we do next.

One of the main goals in the study of dynamical filtration is to find a standardness criterion, and for that purpose, dynamical I-cosiness was introduced in \cite{these_PL}. It relies on the notion of \emph{real time joinings} of filtrations: by that we mean a system $\bfZ := (Z, \C, \l, R)$ and a pair $((\F', R) , (\F'', R))$ defined on $\bfZ$ such that both $(\F', R)$ and $(\F'', R)$ are isomorphic to $(\F,T)$ and immersed in $(\F' \vee \F'', R)$.

A dynamical filtration $\F$ is I-cosy if for every $\F_0$-measurable random variable $\xi$ taking values in a compact metric space $(E, d)$ and every $\delta > 0$ there exists an integer $n_0 \leq 0$ and a real time joining $((\F', R), (\F'', R))$ such that $\F'_{n_0} \indep \F''_{n_0}$ and 
$$\bbE[d(\xi', \xi'')] \leq  \delta,$$
where $\xi'$ and $\xi''$ are the respective copies of $\xi$ in $\F_0'$ and $\F_0''$. In the static case, it is known that I-cosiness is equivalent to standardness (see \cite[Theorem 4.9]{Laurent_standardness-cosiness}). In the dynamical case that is of interest to us here, it was proved in \cite{these_PL} that standard dynamical filtrations are I-cosy, but the converse was left as an open question. The purpose of this section is to prove, using a non-standard extension, that the converse is not true: in the dynamical setting, I-cosiness is necessary but not sufficient for a dynamical filtration to be standard.

\begin{prop} \label{prop:non-stadard_I-cosy}
There exists a non-standard and $I$-cosy dynamical filtration.
\end{prop}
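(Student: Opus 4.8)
The plan is to manufacture the filtration by using the non-standard $T,T^{-1}$ extension of Theorem \ref{thm:hyper-confined_extension} as a single step of a dynamical filtration whose remaining steps are standard, and then to establish I-cosiness by an asymptotic coupling argument. Fix a system $\bfX$ with the $4$-fold PID property and $T^2$ ergodic (such systems exist, e.g. finite rank mixing maps by Ryzhikov's result quoted before Lemma \ref{lem:butterfly}), so that $\bfY \ltimes \bfX \overset{\pi}{\to} \bfY$ is non-standard. I would then build a dynamical filtration $(\F_n)_{n \leq 0}$ on $\bfY \ltimes \bfX$ in which the top step $\F_0 \arr \F_{-1}$ is isomorphic to $\bfY \ltimes \bfX \arr \bfY$ (that is, $\F_{-1}$ is the copy of the Bernoulli base and $\F_0$ the full $\s$-algebra), while the sub-filtration $(\F_n)_{n \leq -1}$ is a product-type dynamical filtration of $\bfY$. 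Such a product-type filtration exists because $\bfY$ is a Bernoulli shift, hence splits as a countable product of independent Bernoulli factors indexed by $n \leq -1$, whose partial joins give the $\F_n$ with $\F_{-1}$ equal to the whole base.

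For the non-standardness half this set-up suffices: Proposition \ref{prop:standard_filt_product_type} applies in contrapositive form, since if $\F$ were standard then the step $\F_0 \arr \F_{-1}$ would be standard, contradicting Theorem \ref{thm:hyper-confined_extension}. Together with Proposition \ref{prop:standard_filt_product_type} this simultaneously confirms, as advertised in the introduction, that standardness is not an asymptotic property for dynamical filtrations. The remaining and genuinely hard point is I-cosiness. Given an $\F_0$-measurable $\xi$ valued in a compact metric space $(E,d)$ and $\delta > 0$, I must exhibit $n_0 \leq 0$ and a real-time self-joining with $\F'_{n_0} \indep \F''_{n_0}$ and $\bbE[d(\xi',\xi'')] \leq \delta$. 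The strategy is to approximate $\xi$ by a variable depending on finitely many recent base innovations together with the $\bfX$-coordinate, and then construct the joining so that the two copies share the recent innovations and the driving dynamics while their deep $\s$-algebras are independent.

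The main obstacle is the coupling of the $\bfX$-fibres, and it is here that the $T^2$-ergodicity cuts both ways. Precisely because $T^2$ is ergodic, a self-joining in which the two copies of the base are decoupled forces, through the Rokhlin cocycle and the relative unique ergodicity of Lemma \ref{lem:rel_unique_ergo}, the two $\bfX$-fibres to become \emph{independent} — this is exactly the confinement of Theorem \ref{thm:TT-1_confined} — which tends to push $\xi'$ and $\xi''$ apart rather than together. To defeat this I would exploit the ergodicity of the induced return-time map $H$ of Corollary \ref{cor:ergodicite_cocycle}: by arranging the deep independence so that the difference cocycle returns to the origin (the set $Z_0$ of Lemma \ref{lem:ergodicite_cocycle}), the two $\bfX$-orbits re-synchronise along the return times $N_1$, letting the fibres be kept close at time $0$ while the deep factors stay genuinely independent; the final upgrade from pairwise to mutual independence would again go through Lemma \ref{lem:butterfly}. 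I expect the delicate part to be choosing the filtration so that the required independence $\F'_{n_0} \indep \F''_{n_0}$ can be realised through the $\bfX$-fibres (intertwined into the deeper levels) rather than through the base, so that the base dynamics may be shared and fibre drift avoided; reconciling this with the embedded non-standard step is where essentially all of the work lies.
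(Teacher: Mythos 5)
The non-standardness half of your argument is correct and is exactly the paper's: embed the non-standard step of Theorem \ref{thm:hyper-confined_extension} as one step of the filtration and invoke Proposition \ref{prop:standard_filt_product_type} in contrapositive. The gap is in the I-cosiness half, and it is not a technical detail but the consequence of a structural choice: you place the deep levels $(\F_n)_{n\le -1}$ \emph{inside} the Bernoulli base $\bfY$ of the $T,T^{-1}$ system. Then the required independence $\F'_{n_0}\indep\F''_{n_0}$ must be realised by decoupling part of the base, and since the coordinate $y(0)$ driving the fibre depends on \emph{all} the independent Bernoulli factors in the splitting of $\bfY$ (not just the recent ones), the two copies of the fibre dynamics genuinely diverge. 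Your proposed rescue via the return-time map $H$ is only sketched, and as stated it does not go through: Corollary \ref{cor:ergodicite_cocycle} concerns the \emph{product} measure on $Y^2$, whereas in your joining the two bases are only partially decoupled, so neither that corollary nor Lemma \ref{lem:butterfly} applies in the form you invoke. You correctly identify that confinement works against you here, but ``arranging the deep independence so that the difference cocycle returns to the origin'' is precisely the statement that needs proof, and I do not see how to supply it; it is not even clear that the filtration you define is I-cosy.

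The paper sidesteps this entirely by a different choice of filtration. Instead of carving the deep levels out of $\bfY$, it adjoins a fresh sequence of mutually independent auxiliary systems $(\bfV_n)_{n\le -2}$, sets $\F_n=\bigvee_{k\le n}\bfV_k$ for $n\le -2$, and puts the \emph{whole} non-standard extension at the top two levels: $\F_{-1}=\bigvee_{k\le -2}\bfV_k\vee\s(\pi)$ and $\F_0=\bigvee_{k\le -2}\bfV_k\vee\bfX$. In the real-time joining one doubles only the $\bfV_n$ with $n\le n_0$ and shares everything else --- including the entire $T,T^{-1}$ system, base and fibre alike. The approximation $\tilde\xi$ of $\xi$ then satisfies $\tilde\xi'=\tilde\xi''$ exactly, the independence at depth $n_0$ costs nothing, and the only non-trivial immersion condition reduces to $\bfX'\indep_{\bfY'}\bfY''$, which is trivial because $\bfY'=\bfY''$. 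In short: the independence demanded by I-cosiness should be paid for with disposable auxiliary factors sitting below the non-standard step, not with the base of the $T,T^{-1}$ system itself. If you want to keep your construction, you would have to prove the re-synchronisation claim from scratch, which is a substantially harder (and possibly false) statement.
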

\begin{proof}
Let $\pi: \bfX \to \bfY$ be a factor map yielding a non-standard extension (we know it exists from Theorem \ref{thm:hyper-confined_extension}). Take a sequence $(\bfV_n)_{n \leq -2}$ of non-trivial dynamical systems and set 
$$\bfZ = (Z, \C, \rho, R) := \left(\bigotimes_{n \leq -2} \bfV_n\right) \otimes \bfX.$$
We consider the filtration defined by
$$\F_n := \left\{
\begin{array}{ll}
\bigvee_{k \leq n} \bfV_k & \text{if } n \leq -2\\
\bigvee_{k \leq -2} \bfV_k \vee \s(\pi) & \text{if } n = -1\\
\bigvee_{k \leq -2} \bfV_k \vee \bfX & \text{if } n = 0\\
\end{array} \right.$$
Since, $\bfX \overset{\pi}{\to} \bfY$ is not standard, it is easy to check that the extension
$$\bigotimes_{n \leq -2} \bfV_n \otimes \bfX \overset{\tilde\pi}{\to} \bigotimes_{n \leq -2} \bfV_n \otimes \bfY$$
 is not either. Therefore, Proposition \ref{prop:standard_filt_product_type} yields that $(\F, R)$ is not standard. 

We will use and slightly adapt the argument used in \cite{article_PL} to show that product type filtrations are I-cosy to show that $\F$ is also I-cosy. Let $\xi$ be a $\F_0$-measurable random variable taking values in a compact metric space $(E, d)$ and $\delta > 0$. There exist $n_0 \leq -2$ and a $\left(\bigvee_{n_0+1 \leq k \leq -2} \bfV_k \vee \bfX\right)$-measurable $\tilde \xi$ such that 
$$\bbE[d(\xi, \tilde\xi)] \leq \delta/2.$$
We now introduce our joining: we set the system
$$\bfW = (W, \D, \gamma, Q) := \bigotimes_{n \leq n_0} \bfV'_n \otimes \bigotimes_{n \leq n_0} \bfV''_n \otimes \bigotimes_{n_0+1 \leq n \leq -2} \bfV_n \otimes \bfX,$$
and denote $(\F', \F'')$ the copies of $\F$ on $\bfW$. By that, we mean that
$$\F'_n := \left\{
\begin{array}{ll}
\bigvee_{k \leq n} \bfV'_k & \text{if } n \leq n_0\\
\F'_{n_0} \vee \bigvee_{n_0 < k \leq n} \bfV_k & \text{if } n_0 < n \leq -2\\
\F'_{-2} \vee \s(\pi) & \text{if } n = -1\\
\F'_{-2} \vee \bfX & \text{if } n = 0\\
\end{array} \right.,$$
and a similar definition for $\F''$.
Clearly, $\F'_{n_0}$ and $\F''_{n_0}$ are independent and $\tilde\xi' =  \tilde\xi''$, which yields 
$$\bbE[d(\xi', \xi'')] \leq \bbE[d(\xi', \tilde\xi')] + \bbE[d(\xi'', \tilde\xi'')] \leq \delta.$$

We now only need to check that $((\F', Q), (\F'', Q))$ is a real time joining, i.e. for every $n \leq -1$:
$$\F'_{n+1} \indep_{\F'_n} \F''_n \; \text{ and } \; \F''_{n+1} \indep_{\F''_n} \F'_n.$$
For $n \leq -2$ we get the relative independent condition like in the product type case. We now check that $\F'_{0} \indep_{\F'_{-1}} \F''_{-1}$, which reduces to $\bfX' \indep_{\bfY'} \bfY''$. However, this is clearly true because $\bfY' = \bfY''$. 
\end{proof}

Here we exploit the strong structure of some specific extension to get a non-standard filtration. Therefore it is natural to ask:
\begin{quest}
Is there an I-cosy dynamical filtration such that each extension $\F_{n+1} \arr \F_n$ is of product type, but which is still not standard ?
\end{quest}

\begin{description} [leftmargin=*] \item[Acknowledgments.]
The author thanks Thierry de la Rue and Emmanuel Roy for their supervision, diligent reading and suggestions. We are also grateful to Christophe Leuridan and Mariusz Lema\'nczyk for taking interest in this work and giving insightful remarks.
\end{description}

\bibliographystyle{plain}
\bibliography{biblio_confined_extensions_paper}

\end{document}